\documentclass[reqno,english]{amsart}
\usepackage{latexsym,verbatim,amscd,mathrsfs,array}
\usepackage{amsmath,amssymb,amsthm,amsfonts,graphicx,color}
\usepackage{pdfsync}
\usepackage{epstopdf}
\usepackage[colorlinks=true]{hyperref}
\usepackage{soul}








\newcommand{\inn}{{\quad\hbox{in } }}

\newcommand{\R} {\mathbb R}
\newcommand{\cuad}{{\sqcap\kern-.68em\sqcup}}

\newcommand{\be}{\begin{equation}}
\newcommand{\ee}{\end{equation}}

\newcommand{\s}{{\mathbb S}}

\newtheorem{lem}{Lemma}[section]
\newtheorem{prop}[lem]{Proposition}
\newtheorem{thm}[lem]{Theorem}

\newtheorem{defi}[lem]{Definition}

\newtheorem{rem}[lem]{Remark}
\numberwithin{equation}{section}

\def\r{\mathbb R}

\def\s{\mathbb S}
\def\z{\mathbb Z}

\def\c{\mathbb C}

\DeclareMathOperator{\divergence}{div}
\DeclareMathOperator{\sech}{sech}
\setlength{\oddsidemargin}{.4in}
\setlength{\evensidemargin}{.4in}
\setlength{\textwidth}{6in}
\numberwithin{equation}{section}


\begin{document}

\title[Delaunay-type singular solutions for the fractional Yamabe problem]
{Delaunay-type singular solutions for the fractional Yamabe Problem}


\author[A. DelaTorre]{Azahara DelaTorre}

\address{Azahara DelaTorre
\hfill\break\indent
Universitat Polit\`ecnica de Catalunya,
\hfill\break\indent
ETSEIB-MA1, Av. Diagonal 647, 08028 Barcelona, Spain}
\email{azahara.de.la.torre@upc.edu}

\author[M. del Pino]{Manuel del Pino}

\address{Manuel del Pino
\hfill\break\indent
Universidad de Chile,
\hfill\break\indent Departamento de
Ingenier\'{\i}a  Matem\'atica and Centro de Modelamiento
 Matem\'atico
\hfill\break\indent (UMI 2807 CNRS), Casilla 170 Correo 3, Santiago,
Chile}
\email{delpino@dim.uchile.cl}

\author[M.d.M. Gonz\'alez]{Mar\'ia del Mar Gonz\'alez}

\address{Mar\'ia del Mar Gonz\'alez
\hfill\break\indent
Universitat Polit\`ecnica de Catalunya
\hfill\break\indent
 ETSEIB-MA1, Av. Diagonal 647, 08028 Barcelona, Spain}
\email{mar.gonzalez@upc.edu}

\author[J. Wei]{Juncheng Wei}
\address{Juncheng. Wei
\hfill\break\indent
University of British Columbia
\hfill\break\indent
 Department of Mathematics, Vancouver, BC V6T1Z2, Canada} \email{jcwei@math.ubc.ca}

\thanks{A. DelaTorre is Supported by MINECO grants MTM2011-27739-C04-01, MTM2014-52402-C3-1-P and  FPI-2012 fellowship, and is part of the Catalan research group 2014SGR1083. M.d.M. Gonz\'alez is supported by MINECO grants MTM2011-27739-C04-01 and MTM2014-52402-C3-1-P, and is part of the Barcelona Graduate School of Math and the Catalan research group 2014SGR1083. M. del Pino has been supported by grants Fondecyt 1150066, Fondo Basal CMM and by Nucleo Mienio CAPDE, NC130017. J. Wei is partially supported by NSERC of Canada.}

\begin{abstract}

We construct Delaunay-type solutions for the fractional Yamabe problem with an isolated singularity
$$ (-\Delta)^\gamma w= c_{n, {\gamma}}w^{\frac{n+2\gamma}{n-2\gamma}}, w>0 \ \mbox{in} \ \R^n \backslash \{0\}.
$$
We follow a variational approach, in which the key is the
computation of the fractional Laplacian in polar coordinates.
\end{abstract}

\date{}\maketitle


\section{Introduction and statement of the main result}

Fix $\gamma\in(0,1)$ and $n\geq 2+2\gamma$. We consider the problem of finding radial solutions for the fractional Yamabe problem in $\r^n$ with an isolated singularity at the origin. That means that we look for positive, radially symmetric solutions of
\begin{equation}\label{equation0}(-\Delta)^{\gamma}w=c_{n,\gamma}w^{\frac{n+2\gamma}{n-2\gamma}}\text{ in }\r^n \setminus\ \{0\},\end{equation}
where $c_{n,\gamma}$ is any positive constant that, without loss of generality, will be normalized as
 \begin{equation}\label{cte}
c_{n,\gamma}=2^{2\gamma}\left(\frac{\Gamma(\frac{1}{2}(\frac{n}{2}+\gamma))}
{\Gamma(\frac{1}{2}(\frac{n}{2}-\gamma))}\right)^2>0.\end{equation}
 In geometric terms, given the Euclidean metric $|dx|^2$ on $\mathbb R^n$, we are looking for a conformal metric
 \begin{equation*}g_w=w^{\frac{4}{n-2\gamma}}|dx|^2,\ w>0,\end{equation*} with positive constant fractional curvature $Q^{g_w}_\gamma\equiv c_{n,\gamma}$, that is radially symmetric and has a prescribed singularity at the origin. It is a classical calculation that $w_1(r)=r^{-\frac{n-2\gamma}{2}}$ is an explicit solution for \eqref{equation0} with the normalization constant \eqref{cte} (see, for instance, Proposition $2.4$ in \cite{xaviros}).

 Because of the well known extension theorem for the fractional Laplacian $(-\Delta)^{\gamma}$ \cite{CaffarelliSilvestre,CaseChang,MarChang},  we have that equation \eqref{equation0} for the case $\gamma\in(0,1)$ is equivalent to the boundary reaction problem
\begin{equation}\label{equation1}\left\{
\begin{split}
-\divergence(y^{1-2\gamma}\nabla W)=0&\text{ in } \r^{n+1}_+,\\
W=w&\text{ on }\r^n\setminus\{0\},\\
- d_{\gamma}\lim_{y\rightarrow 0}y^{1-2\gamma}\partial_yW=c_{n,\gamma}w^{\frac{n+2\gamma}{n-2\gamma}}&\text{ on }\r^n\setminus\{0\},
\end{split}\right.
\end{equation}
for the constant $d_{\gamma}=\tfrac{2^{2\gamma-1}\Gamma(\gamma)}{\gamma\Gamma(-\gamma)}$.

In a recent paper \cite{CaffarelliJinSireXiong} Caffarelli, Jin, Sire and Xiong characterize all the nonnegative solutions to \eqref{equation1}.
Indeed, let $W$ be any nonnegative solution of \eqref{equation1} in $\r^{n+1}_+$ and suppose that the origin 
 is not a removable singularity. Then, writing $r=|x|$ for the radial variable in $\mathbb R^n$, we must have that
  $$W(x,y)=W(r,y)\text{ and }\partial_r W(r,y)<0\quad \forall\ 0<r<\infty.$$
In addition, they also provide their asymptotic behavior. More precisely, if $w=W(\cdot,0)$ denotes the trace of $W$, then near the origin
\begin{equation}\label{asymptotics}
c_1r^{-\tfrac{n-2\gamma}{2}}\leq w(x)\leq c_2r^{-\tfrac{n-2\gamma}{2}},
\end{equation}
where $c_1$, $c_2$ are positive constants.

 We remark that if the singularity at the origin is removable, all the entire solutions to \eqref{equation1} have been completely classified (\cite{JinLiXiong,ChenLiOu}) and, in particular, they must be the standard ``bubbles"
\begin{equation*}
w(x)=c\left(\frac{\lambda}{\lambda^2+|x-x_0|^2}\right)^{\frac{n-2\gamma}{2}},\quad c,\lambda>0, \ x_0\in\mathbb R^n.
\end{equation*}

In this paper we study the existence of ``Delaunay''-type solutions for \eqref{equation0}, i.e, solutions  of the form
\begin{equation}\label{wv}
w(r)=r^{-\frac{n-2\gamma}{2}}v(r)\text{ on } \r^n\setminus\{0\},
\end{equation}
 for some function $0<c_1\leq v\leq c_2$ that, after the Emden-Fowler change of variable $r=e^t$, is periodic in the variable $t$. With some abuse of the notation, we write $v=v(t)$.

 In the classical case $\gamma=1$, equation \eqref{equation0} reduces to a standard second order ODE. However, in the fractional case \eqref{equation0} becomes a fractional order ODE, so classical methods cannot be directly applied here.
 Instead, we reformulate the problem into a variational one for the the periodic function $v$.
The main difficulty is to compute the fractional Laplacian in polar coordinates. %

Our approach does not use the extension problem \eqref{equation1}. Instead we work directly with the nonlocal operator, after suitable Emden-Fowler transformation.  For $\gamma\in(0,1)$ we know that the fractional Laplacian can be defined as a singular kernel as
 \begin{equation*}
 (-\Delta)^{\gamma}w(x)=\kappa_{n,\gamma}\text{P.V. }\int_{\r^n}\frac{w(x)-w(x+y)}{|y|^{n+2 \gamma}}\,dy,
\end{equation*}
where $P.V. $ denotes the principal value, and the constant $\kappa_{n,\gamma}$ (see \cite{Landkof}) is given by
$$\kappa_{n,\gamma}=\pi^{-\tfrac{n}{2}}2^{2\gamma}
\tfrac{\Gamma\left(\tfrac{n}{2}+\gamma\right)}{\Gamma(1-\gamma)}\gamma.$$
The main idea here is to use the Emden-Fowler change of variable in the singular integral. After some more changes of variable, equation \eqref{equation0} will be written as
\begin{equation}\label{equation3}
 \mathscr{L}_{\gamma}v=c_{n,\gamma}v^{\beta}, v>0,
\end{equation}
where $$\beta=\tfrac{n+2\gamma}{n-2\gamma}$$ is the critical exponent in dimension $n$ and $ \mathscr{L}_{\gamma}$ is the linear operator defined by
\begin{equation*}
 \mathscr{L}_{\gamma}v(t)=\kappa_{n,\gamma}P.V.   \int_{-\infty}^\infty
 (v(t)-v(\tau))K(t-\tau)\,d\tau + c_{n,\gamma} v(t),
\end{equation*}
for
$K$ a singular kernel which is precisely written in \eqref{K}. The behaviour of $K$ near the origin is the same as the kernel of the fractional Laplacian $(-\triangle)^{\gamma}$ in $\r$ and near infinity it presents an exponential decay.  This kind of kernels corresponds to tempered stable process and they have been studied
in \cite{Kassmann352} and \cite{silvestre}, for instance.

If we take into account just periodic functions $v(t+L)=v(t)$, the operator $\mathscr{L}_{\gamma}$ can be rewritten as
\be\label{LgammaL}
 \mathscr{L}_{\gamma}^Lv(t)=\kappa_{n,\gamma}P.V.   \int_{0}^L
 (v(t)-v(\tau))K_L(t-\tau)\,d\tau + c_{n,\gamma} v(t),
\ee
where $K_L$ is a periodic singular kernel that will be defined in \eqref{KL}.
 For periodic solutions, problem \eqref{equation3} is equivalent to finding a minimizer for the functional
\begin{equation*}
\mathscr{F}_L(v) =\frac{ \kappa_{n,\gamma}\int_0^L \int_0^L (v(t)-v(\tau))^2 K_L (t-\tau) \,dt \,d\tau +c_{n,\gamma} \int_0^L v(t)^2 dt}{ (\int_0^L v(t)^{{\beta}+1} dt)^{\frac{2}{{\beta}+1}}}.
\end{equation*}
Note that a minimizer always exists as we can check in Lemma \ref{Lem1}. The minimum value for the functional will be denoted by $c(L)$. \\

Our main result is the following:
 \begin{thm}\label{th1}
There is a unique $L_0>0$ such that $c(L)$ is attained by a nonconstant minimizer when $L>L_0$ and when $L \leq L_0$, $ c(L)$ is attained by the constant only.
\end{thm}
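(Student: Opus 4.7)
The plan is to analyze the stability of the constant critical point $v\equiv 1$ of $\mathscr{F}_L$ via its second variation, identify the threshold $L_0$ through Fourier analysis on the torus $\mathbb{R}/L\mathbb{Z}$, and combine this with the existence of a minimizer provided by Lemma~\ref{Lem1}.

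First, one checks directly that $v\equiv 1$ is a critical point with $\mathscr{F}_L(1)=c_{n,\gamma}L^{(\beta-1)/(\beta+1)}$. A Taylor expansion along $v=1+\epsilon\phi$ with $\int_0^L\phi=0$ yields
\begin{equation*}
\mathscr{F}_L(v)-\mathscr{F}_L(1) \;=\; \epsilon^2 L^{-2/(\beta+1)}\bigl[B(\phi,\phi)-\beta\, c_{n,\gamma}\,\|\phi\|_{L^2}^2\bigr] + O(\epsilon^3),
\end{equation*}
where $B$ denotes the quadratic form appearing in the numerator of $\mathscr{F}_L$. Since $\mathscr{L}_\gamma^L$ is translation invariant, the Fourier basis $\phi_k(t):=e^{2\pi i k t/L}$ diagonalizes $B$; unfolding the periodic kernel against the $L$-periodic factor $1-\cos(2\pi k s/L)$ yields
\begin{equation*}
\frac{B(\phi_k,\phi_k)}{\|\phi_k\|_{L^2}^2} \;=\; c_{n,\gamma}+2\,m(2\pi k/L), \qquad m(\xi):=\kappa_{n,\gamma}\int_{\mathbb{R}}\bigl(1-\cos(\xi s)\bigr)K(s)\,ds,
\end{equation*}
the multiplier of the non-local part of $\mathscr{L}_\gamma$ on $\mathbb{R}$. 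The properties of $K$ (non-negative, even, $\sim |s|^{-1-2\gamma}$ near $0$, exponentially decaying at infinity) make $m$ continuous, even, strictly increasing on $[0,\infty)$, with $m(0)=0$ and $m(\xi)\to\infty$.

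Therefore the second variation is non-negative on mean-zero perturbations iff $2m(2\pi k/L)\geq (\beta-1)c_{n,\gamma}$ for every $k\geq 1$, which by monotonicity in $|\xi|$ reduces to the lowest mode $k=1$. Since $L\mapsto m(2\pi/L)$ is continuous and strictly decreasing from $+\infty$ to $0$, the equation $2m(2\pi/L_0)=(\beta-1)c_{n,\gamma}$ has a unique solution $L_0>0$. For $L>L_0$, testing with $\phi(t)=\cos(2\pi t/L)$ makes the bracketed term above strictly negative, so $\mathscr{F}_L(1+\epsilon\phi)<\mathscr{F}_L(1)$ for small $\epsilon$; combined with Lemma~\ref{Lem1}, the minimizer must be non-constant.

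The main obstacle is the case $L\leq L_0$, where non-negativity of the second variation is only local information and one must exclude non-constant global minimizers. The plan is to combine local bifurcation analysis with a continuation argument. At $(L_0,1)$ the linearized operator has simple kernel (modulo translations) spanned by $\cos(2\pi t/L_0)$, so the Crandall--Rabinowitz theorem produces a smooth local branch of non-constant solutions; computing the cubic and quartic corrections of $\mathscr{F}_L$ along this branch shows the bifurcation is supercritical in $L$, so no non-constant solutions live in a left-neighborhood of $L_0$. To propagate this to all of $(0,L_0]$, set $\mathcal{A}:=\{L\in(0,L_0]: c(L)\text{ is attained only by constants}\}$. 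Then $\mathcal{A}$ is non-empty (it contains a right-neighborhood of $0^+$, since $m(2\pi/L)\to\infty$ and a sharp Poincar\'e-type estimate forces minimizers to be constant), closed by continuity of $L\mapsto c(L)$ and compactness of minimizing sequences, and open in $(0,L_0)$ by the same bifurcation argument applied at any candidate non-trivial critical point. Verifying the supercritical character of the pitchfork at $L_0$---which requires an explicit computation of nonlocal cubic and quartic forms against the first Fourier mode---is the most delicate technical step.
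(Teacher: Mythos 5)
Your analysis of the unstable direction is sound and matches the paper's: the threshold is indeed identified by the linearization at $v\equiv 1$ (the paper's $\tilde L_0$, defined through the first eigenvalue $\delta_L$ of \eqref{vpropio}), and for $L>L_0$ testing the quotient with $1+\epsilon\varphi_1$ (suitably rescaled) shows $c(L)<c^*(L)$, so the minimizer from Lemma~\ref{Lem1} is nonconstant. (Minor caveat: strict monotonicity of your multiplier $m$ is not automatic for a general positive even kernel and has to be read off from the explicit Gamma-function formula for the symbol.)

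The genuine gap is in the direction $L\leq L_0$, and your proposed repair does not close it. A local bifurcation analysis at the constant, even with a verified supercritical pitchfork, only excludes nonconstant solutions \emph{near} $v\equiv 1$; it says nothing about a global minimizer that sits far from the constant in $H^\gamma_L$, so the ``openness'' of your set $\mathcal{A}$ is unjustified --- nonconstant minimizers need not appear by bifurcating off the constant branch. The paper avoids this entirely with a scaling monotonicity lemma (Lemma~\ref{L21}): if a nonconstant minimizer exists at some period $L_1$, then rescaling it to period $L>L_1$ and using the strict kernel inequality $\tfrac{L}{L_1}K_L\bigl(\tfrac{L}{L_1}\xi\bigr)<K_{L_1}(\xi)$ of Lemma~\ref{appendix} forces $c(L)<c^*(L)$ for \emph{all} $L>L_1$. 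Hence $\{L: c(L)<c^*(L)\}$ is automatically a half-line $(L_0,\infty)$, and no continuation argument is needed. Two further ingredients you gloss over are also essential: for small $L$ the Poincar\'e argument applied to $\phi=\partial_t v_L$ requires a uniform $L^\infty$ bound on the minimizers, which the paper obtains by a Gidas--Spruck blow-up combined with the one-dimensional Liouville theorem of Lemma~\ref{scri} (nontrivial here since $N=1<2\gamma$ is allowed); and the endpoint case $L=L_0$ is handled not by second-variation information but by pairing the equation against the first eigenfunction $\varphi_1>0$ and using convexity of $w\mapsto \beta w+1-(1+w)^\beta$ to force $w=v-1\equiv 0$. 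Without substitutes for these three steps --- the scaling lemma, the uniform bound via blow-up, and the endpoint convexity argument --- the proposal does not prove the theorem.
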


 In a recent paper \cite{paper1} the authors study this fractional problem \eqref{equation0} from  two different points of view. They carry out an ODE-type study and explain the geometrical interpretation of the problem. In addition, they give some results towards the description of some kind of generalized phase portrait. For instance, they prove the existence of periodic radial solutions for the linearized equation around the equilibrium $v_1\equiv 1$, with period $L_0=L_0(\gamma)$. For the original non-linear problem  they show the existence of a Hamiltonian quantity conserved along trajectories, which suggests that the non-linear problem has periodic solutions too, for every period larger than this minimal period $L_0$. Theorem \ref{th1} proves this conjecture.

The construction of Delaunay solutions allows for many further studies. For instance, as a consequence of our construction one obtains the non-uniqueness of the solutions for the fractional Yamabe equation (see \cite{MarQing} for an introduction to this problem) in the positive curvature case, since it gives different conformal metrics on $\mathbb S^1(L)\times \mathbb S^{n-1}$ that have constant fractional curvature.
 This is well known in the scalar curvature case $\gamma=1$ (see the lecture notes \cite{Schoen:notas} for an excellent review, or the paper \cite{Schoen:number}). In addition, this fact gives some examples for the calculation of the total fractional scalar curvature functional, which maximizes the standard fractional Yamabe quotient across conformal classes.

From another point of view, Delaunay solutions can be used in gluing problems. Classical references are, for instance, \cite{Mazzeo-Pacard:isolated, Mazzeo-Pollack-Uhlenbeck} for the scalar curvature, and \cite{Mazzeo-Pacard:Delaunay-ends,Mazzeo-Pacard-Pollack} for the construction of constant mean curvature surfaces with Delaunay ends. In the non-local case, we use Delaunay-type singularities to deal with the problem of constructing metrics of constant fractional curvature with prescribed isolated singularities \cite{Ao-DelaTorre-Gonzalez-Wei}.

Recently it has been introduced a related notion of nonlocal mean curvature $H_\gamma$ for the boundary of a set in $\mathbb R^n$ (see \cite{Caffarelli-Roquejoffre-Savin,Valdinoci:review}).  Finding Delaunay-type surfaces with constant nonlocal mean
curvature  has  just  been  accomplished  in  \cite{Cabre-Fall-Sola-Weth}.  For  related  nonlocal  equations with periodic solutions see also \cite{Davila-delPino-Dipierro-Valdinoci,CabreMasJoan}.\\

The paper will be structured as follows: in Section $2$ we will introduce the problem. In particular we will recall some known results for the classical case and we will present the formulation of the problem through some properties of the singular kernel. In Section $3$ we will show some technical results that we will need in the last Section; where we will use the variational method to prove the main result in this paper, this is, Theorem \ref{th1}.

\section{Set up of the problem}

\subsection{Classical case $\gamma=1$}

We recall first some well known results. Consider the classical Yamabe problem
\be\label{eqclasica}
(-\Delta) w = c_{n, 1}w^{\frac{n+2}{n-2}},\  w >0  \inn \R^n \backslash \{0\}.
\ee
Because of the asymptotic behavior in \eqref{asymptotics} we look at radially symmetric solutions of the form
\begin{equation*}
w(r)= r^{-\frac{n-2}{2}} v (r),
\end{equation*}
where $r=|x|.$
Then, applying the Emden-Fowler change of variable $e^t=r$, equation \eqref{eqclasica}
 reads

\begin{equation*}
 -\ddot{v}+\tfrac{(n-2)^2}{4}v=\tfrac{(n-2)^2}{4}v^{\frac{n+2}{n-2}},\quad v>0.
 \end{equation*}
 This equation is easily integrated and the analysis of its phase portrait gives that all bounded solutions must be periodic (see, for instance, the lecture notes \cite{Schoen:notas}). More precisely, the Hamiltonian
 \begin{equation*}H_1(v,\dot{v}):= \tfrac{1}{2}\dot v^2+\tfrac{(n-2)^2}{4}\left(\tfrac{(n-2)}{2n} v^{\frac{2n}{n-2}}-\tfrac{1}{2}v^2\right)\end{equation*}
 is preserved along trajectories. Thus, looking at its level sets we conclude that there exists a family of periodic solutions $\{v_L\}$ of periods $L\in(L_0,\infty)$. Here
 \begin{equation*}
  L_0=\tfrac{2\pi}{\sqrt{n-2}}
 \end{equation*}
 is the minimal period and it can be calculated from the linearization at the equilibrium solution $v_1\equiv 1$. These $\{v_L\}$ are known as the Fowler or Delaunay solutions for the scalar curvature.

\subsection{Formulation of the problem.} We now consider the singular Yamabe problem

\be
(-\Delta)^{\gamma} w = c_{n, {\gamma}}w^{\beta} \inn \R^n \backslash \{0\}, \ w >0
\label{1}\ee
for $ \gamma \in (0,1)$, $n\geq 2$, $\beta$ the critical exponent given by
\begin{equation*}
\beta=\frac{n+2\gamma}{n-2\gamma}
\end{equation*}
and
\begin{equation*}\label{deffraclapla}
 (-\Delta)^{\gamma}w(x)=\kappa_{n,\gamma}\text{P.V. }\int_{\r^n}\frac{w(x)-w(x+y)}{|y|^{n+2 \gamma}}\,dy,
\end{equation*}
where $P.V. $ denotes the principal value, and the constant $\kappa_{n,\gamma}$ (see \cite{Landkof}) is given by
$$\kappa_{n,\gamma}=\pi^{-\tfrac{n}{2}}2^{2\gamma}
\tfrac{\Gamma\left(\tfrac{n}{2}+\gamma\right)}{\Gamma(1-\gamma)}\gamma.$$
Because of \eqref{asymptotics} we only consider radially symmetric solutions of the form
\begin{equation*}
w(x)= |x|^{-\frac{n-2\gamma}{2}} v (|x|),
\end{equation*}
where $v$ is some function $0<c_1\leq v\leq c_2$.
In radial coordinates $(r=|x|, \theta\in\s^{n-1}$ and $s=|y|, \sigma\in\s^{n-1}$), we can express the fractional Laplacian as
\begin{equation*}
(-\Delta )^{\gamma} u = \kappa_{n, {\gamma}} P.V.  \int_{0}^\infty \int_{\s^{n-1}} \frac{ r^{-\frac{n-2\gamma}{2}} v(r)-s^{-\frac{n-2\gamma}{2}} v(s)}{|r^2+s^2-2 rs \langle\theta , \sigma\rangle|^{\frac{n+2\gamma}{2}}} s^{n-1}  d\sigma ds.
\end{equation*}
Inspired in the computations by Ferrari and Verbitsky in \cite{FerrariVerbitsky}, we write
 \begin{equation*}
 s= r \bar{s},
 \end{equation*}
so the radial function $v$ can be expressed as
 $$v(r)=  (1-\bar{s}^{-\frac{n-2\gamma}{2}}) v(r)+ \bar{s}^{-\frac{n-2\gamma}{2}} v(r).$$
 Thus the equation \eqref{1} for $v$ becomes
\be\label{eqvt}
\kappa_{n,\gamma}P.V.   \int_{0}^\infty \int_{\s^{n-1}} \frac{ \bar{s}^{ n-1-\frac{n-2\gamma}{2}} ( v(r)- v(r \bar{s}))}{|1+\bar{s}^2-2 \bar{s} \langle\theta , \sigma\rangle|^{\frac{n+2\gamma}{2}}}  d\sigma d\bar{s} + A v= c_{n,\gamma}v^{\beta}(r),
\ee
where
\begin{equation*}
A= \kappa_{n,\gamma}P.V.   \int_{0}^\infty \int_{\s^{n-1}} \frac{ (1-\bar{s}^{-\frac{n-2\gamma}{2}}) \bar{s}^{n-1}}{|1+\bar{s}^2-2 \bar{s} \langle\theta , \sigma\rangle|^{\frac{n+2\gamma}{2}}}  d\sigma d\bar{s}.
\end{equation*}
\begin{rem}
The constant $A$ is strictly positive. Indeed, from \eqref{eqvt} we have
\begin{equation*}
A=c_{n,\gamma}>0,
\end{equation*}
since $c_{n,\gamma}$ is normalized such that $v_1\equiv 1$ is a solution for the singular Yamabe problem (see Proposition $2.7$ in \cite{paper1}).
\end{rem}
Finally we do the Emden-Fowler changes of variable $r=e^t$ and  $s=e^{\tau}$  in \eqref{eqvt} to obtain
\be
    \mathscr{L}_{\gamma}v= c_{n,\gamma}v^{\beta},
    \label{10}
\ee
where the operator $\mathscr{L}_{\gamma}$ is defined as
\be\label{L}
\mathscr{L}_{\gamma}v=\kappa_{n,\gamma}P.V.   \int_{-\infty}^\infty (v(t)-v(\tau))K(t-\tau)\,d\tau + c_{n,\gamma} v,
\ee
for a function $v=v(t)$ and the kernel $K$   is given by

\begin{equation}\label{Kk}
K (\xi)= 2^{-\frac{n+2\gamma}{2}}\int_{\s^{n-1}} \frac{1}{ |\cosh (\xi) -\langle\theta , \sigma\rangle|^{\frac{n+2\gamma}{2}} } d \sigma=\int_{\s^{n-1}} \frac{e^{-\frac{n+2\gamma}{2} {\xi}}}{ (1+ e^{-2{\xi}} -2 e^{{-\xi}}\langle\theta , \sigma\rangle )^{\frac{n+2\gamma}{2}}}\,d\sigma.
\end{equation}

\begin{rem}\label{rotinv}
 $K$ is rotationally invariant in the variable $\theta$, thus we drop the dependence on $\theta$ in the argument of $K$. Indeed if we identify $e_1=(1,0,\dots,0)$ with a fixed point in $\s^{n-1}$ via the usual embedding $\s^{n-1}\hookrightarrow \r^n$ and we define $$J(\theta):=\int_{\s^{n-1}}\frac{e^{-\frac{n+2\gamma}{2} {\xi}}}{ |1+ e^{-2{\xi}} -2 e^{-{\xi}}\langle\theta , \sigma\rangle |^{\frac{n+2\gamma}{2}}}\,d\sigma,$$  it is easy to check that  $J(\theta)=J(e_1)$. The proof is trivial using  equality \eqref{Kk}  and the change of variable $\tilde{\sigma}=R^{\top}\sigma$, where $R$ is any rotation such that $R(e_1)=\theta$.
 \end{rem}
 The kernel also can be written using spherical coordinates as
\be\begin{split}\label{K}
K({\xi})
&= \bar c_n e^{-\frac{n+2\gamma}{2} {\xi}} \int_0^\pi \frac{(\sin \phi_1)^{n-2}}{ (1+ e^{-2{\xi}} -2 e^{{-\xi}} \cos \phi_1)^{\frac{n+2\gamma}{2}}} \,d\phi_1\\
&=\bar c_n 2^{-\frac{n+2\gamma}{2}}\int_0^\pi \frac{(\sin \phi_1)^{n-2}}{ (\cosh (\xi) -\cos(\phi_1))^{\frac{n+2\gamma}{2}} } \,d \phi_1,
\end{split}
\ee
where $\phi_1$ is the angle between $\theta$ and $\sigma$, and $\bar c_n$ is a positive dimensional constant that only depends on the integral in the rest of the spherical coordinates.

\begin{rem}
 The expression \eqref{K} implies that $K(\xi)$ is an even function.
 Moreover, since $\phi_1\in(0,\pi)$ and $\cosh(x)\geq 1$, $\forall x\in\R$, $K$ is strictly positive.
\end{rem}

In the next paragraphs we will find a more explicit formula for $K$ that will help us calculate its asymptotic behavior.

\begin{lem}\label{propiedadeshiperg}
 \textnormal{ \cite{Abramowitz,SlavyanovWolfganglay,MOS}} Let $z\in\c$. The hypergeometric function is defined for $|z| < 1$ by the power series
$$ \mbox{}_2F_{1} (\tilde{a},\tilde{b};\tilde{c};{z}) = 
\frac{\Gamma(\tilde{c})}{\Gamma(\tilde{a})\Gamma(\tilde{b})}\sum_{n=0}^\infty \frac{\Gamma(\tilde{a}+n)\Gamma(\tilde{b}+n)}{\Gamma(\tilde{c}+n)} \frac{z^n}{n!}.$$
It is undefined (or infinite) if $\tilde{c}$ equals a non-positive integer.
Some properties of this function are
\begin{enumerate}
  \item The hypergeometric function evaluated at $z=0$ satisfies
  \begin{equation}\label{prop2}
  \mbox{}_2F_{1} (\tilde{a}+j,\tilde{b}-j;\tilde c;0)=1; \  j=\pm1,\pm2,...
  \end{equation}

 \item If $\tilde{a}+\tilde{b}<\tilde{c}$, the following expansion holds
\be\label{prop8}
\mbox{ }_2 F_1 (\tilde{a}, \tilde{b}; \tilde{c}; x)= O(1),\text{ for }z\sim 1.
\ee

\item  If Re $\tilde{b}>0,|z|<1$,
\begin{equation}\label{prop6}
\begin{split}
\mbox{}_2F_{1} (\tilde{a}, \tilde{a}-\tilde{b}+\tfrac{1}{2}; \tilde{b}+\tfrac{1}{2}; z^2)=\frac{\Gamma(\tilde{b}+\frac{1}{2})}{\sqrt{\pi}\Gamma(\tilde{b})}\int_0^{\pi}\frac{(\sin t)^{2\tilde{b}-1}}{(1+2z\cos t+z^2)^{\tilde{a}}}\,dt.
\end{split}
\end{equation}
\item
If $\tilde{a}-\tilde{b}+1=\tilde{c}$, the following identity holds
\be\label{prop7}
\mbox{ }_2 F_1 (\tilde{a}, \tilde{b}; \tilde{a}-\tilde{b}+1; x)= (1-x)^{1-2\tilde{b}} (1+x)^{2\tilde{b}-\tilde{a}-1} \mbox{ }_2 F_1 \left(\tfrac{\tilde{a}+1}{2}-\tilde{b}, \tfrac{\tilde{a}}{2} -\tilde{b} +1; \tilde{a}-\tilde{b} +1; \tfrac{4 x}{(x+1)^2}\right).
\ee
  \item The derivative of the hypergeometric function with respect to the last argument is
  \begin{equation}\label{prop1}
      \frac{d}{dz}\mbox{ }_2 F_1(\tilde a,\tilde b;\tilde  c;z)=\frac{\tilde a\tilde b}{\tilde c}\mbox{ }_2 F_1(\tilde a+1,\tilde b+1;\tilde c+1;z).
    \end{equation}

\end{enumerate}
\end{lem}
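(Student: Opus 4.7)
This lemma is a compendium of classical facts about the Gauss hypergeometric function ${}_2F_1$, all of which are recorded in the cited handbooks \cite{Abramowitz,SlavyanovWolfganglay,MOS}. My plan is to verify each item from the defining power series where this is immediate, and to indicate the standard source for the less elementary identities, rather than reproduce full derivations.

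Items \eqref{prop2} and \eqref{prop1} are direct consequences of the power series definition. Substituting $z=0$ collapses the sum to its $n=0$ term, where the leading Gamma-factor ratio $\Gamma(\tilde c)/[\Gamma(\tilde a+j)\Gamma(\tilde b-j)]$ cancels exactly against $\Gamma(\tilde a+j)\Gamma(\tilde b-j)/\Gamma(\tilde c)$ to give $1$. For the derivative identity, I would differentiate the series term by term in $z$, shift the summation index $n\mapsto n+1$, and use $\Gamma(a+n+1)=(a+n)\Gamma(a+n)$ applied to $\tilde a$, $\tilde b$, $\tilde c$ separately; after collecting prefactors one recovers $(\tilde a\tilde b/\tilde c)\cdot {}_2F_1(\tilde a+1,\tilde b+1;\tilde c+1;z)$.

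For \eqref{prop8} I would invoke Gauss's summation theorem, which evaluates ${}_2F_1(\tilde a,\tilde b;\tilde c;1)$ as a finite quotient of Gamma functions whenever $\mathrm{Re}(\tilde c-\tilde a-\tilde b)>0$; the absolute convergence of the series up to $z=1$ then yields the claimed $O(1)$ behavior. The integral representation \eqref{prop6} would be obtained from Euler's integral representation of ${}_2F_1$ by the substitution $t=(1-\cos s)/2$, which converts the factor $(1-z^2 t)^{-\tilde a}$ into $(1+2z\cos s + z^2)^{-\tilde a}$ after a trigonometric rearrangement; matching exponents forces precisely the parameter pairing $\tilde a-\tilde b+\tfrac12$ in the second slot and $\tilde b+\tfrac12$ in the third.

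Finally, \eqref{prop7} is one of the Kummer quadratic transformations for ${}_2F_1$. Under the constraint $\tilde c = \tilde a - \tilde b + 1$ that appears on the left-hand side, the hypergeometric differential equation admits the quadratic change of variable $x\mapsto 4x/(x+1)^2$, which preserves the equation up to the explicit algebraic prefactor $(1-x)^{1-2\tilde b}(1+x)^{2\tilde b-\tilde a-1}$ displayed on the right; a normalization check at $x=0$ via \eqref{prop2} closes the argument. Since all five identities are explicitly tabulated in \cite{Abramowitz,SlavyanovWolfganglay,MOS}, the cleanest proof is simply to quote them. The only obstacle in the whole lemma is thus of a notational rather than mathematical nature: the parameters in \eqref{prop6}–\eqref{prop7} must be matched with care, as the conventions of the references differ slightly from those chosen here.
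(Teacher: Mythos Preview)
Your proposal is correct and matches the paper's approach: the paper offers no proof at all for this lemma, treating it as a direct citation of standard identities from \cite{Abramowitz,SlavyanovWolfganglay,MOS}. Your sketches of the derivations are accurate and go beyond what the paper provides, but since the lemma is purely a reference lemma, the citation alone already suffices.
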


\begin{lem}
The kernel $K$ can be expressed in terms of a hypergeometric function as
\be
K({\xi})= c_n(\sinh {\xi})^{-1-2\gamma} (\cosh {\xi})^{ \tfrac{2-n+2\gamma}{2}} \mbox{ }_2 F_1 \left(\tfrac{a+1}{2}-b, \tfrac{a}{2} -b +1; a-b +1; (\textnormal{sech } {\xi})^2\right),
\label{12}
\ee
where $c_n=\bar c_n 2^{-\frac{n+2\gamma}{2}}\frac{\sqrt{\pi}\Gamma(\frac{n-1}{2})}{\Gamma(\frac{n}{2})}$, %
and $\mbox{ }_2 F_1$ is  the hypergeometric function defined in Lemma \ref{propiedadeshiperg}.
\end{lem}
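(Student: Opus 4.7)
The strategy is to recognize the one-dimensional angular integral in \eqref{K} as an Euler-type integral representation of a hypergeometric function, and then apply the quadratic transformation (property (4) of Lemma \ref{propiedadeshiperg}) to move from argument $e^{-2\xi}$ to argument $\operatorname{sech}^2\xi$. Concretely, $a$ and $b$ will be chosen to be $a=\frac{n+2\gamma}{2}$ and $b=\gamma+1$.

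First I would rewrite the integrand of \eqref{K} so that the denominator matches the pattern $(1+2z\cos t+z^2)^{\tilde a}$ appearing in \eqref{prop6}. The substitution $\phi_1\mapsto \pi-\phi_1$ changes $\cosh\xi-\cos\phi_1$ into $\cosh\xi+\cos\phi_1$, and then the identity
\begin{equation*}
\cosh\xi+\cos\phi_1=\tfrac{e^{\xi}}{2}\bigl(1+2e^{-\xi}\cos\phi_1+e^{-2\xi}\bigr)
\end{equation*}
lets us pull out an overall factor $(e^\xi/2)^{-\frac{n+2\gamma}{2}}$. With the choice $\tilde a=\frac{n+2\gamma}{2}$, $\tilde b=\frac{n-1}{2}$, $z=e^{-\xi}$ (so that $2\tilde b-1=n-2$ and $\tilde b+\tfrac12=\frac{n}{2}$), property \eqref{prop6} gives
\begin{equation*}
\int_0^\pi\frac{(\sin t)^{n-2}}{(1+2e^{-\xi}\cos t+e^{-2\xi})^{\frac{n+2\gamma}{2}}}\,dt
=\frac{\sqrt{\pi}\,\Gamma\bigl(\frac{n-1}{2}\bigr)}{\Gamma\bigl(\frac{n}{2}\bigr)}\;\mbox{}_2F_{1}\!\left(\tfrac{n+2\gamma}{2},\gamma+1;\tfrac{n}{2};e^{-2\xi}\right),
\end{equation*}
where I used $\tilde a-\tilde b+\tfrac12=\gamma+1$. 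Combining constants, I obtain the preliminary expression
\begin{equation*}
K(\xi)=\bar c_n\,\frac{\sqrt{\pi}\,\Gamma\bigl(\frac{n-1}{2}\bigr)}{\Gamma\bigl(\frac{n}{2}\bigr)}\,2^{-\frac{n+2\gamma}{2}}\,e^{-\frac{n+2\gamma}{2}\xi}\,\mbox{}_2F_{1}\!\left(\tfrac{n+2\gamma}{2},\gamma+1;\tfrac{n}{2};e^{-2\xi}\right).
\end{equation*}

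Next I would verify that this hypergeometric function falls exactly into the scope of the quadratic transformation \eqref{prop7}: with $\tilde a=\frac{n+2\gamma}{2}=a$ and $\tilde b=\gamma+1=b$ one has $\tilde a-\tilde b+1=\frac{n}{2}=\tilde c$, so \eqref{prop7} applies with $x=e^{-2\xi}$. Using the elementary identities
\begin{equation*}
1-e^{-2\xi}=2e^{-\xi}\sinh\xi,\qquad 1+e^{-2\xi}=2e^{-\xi}\cosh\xi,\qquad \frac{4e^{-2\xi}}{(1+e^{-2\xi})^2}=\operatorname{sech}^2\xi,
\end{equation*}
the transformation rewrites the factor $(1-x)^{1-2b}(1+x)^{2b-a-1}$ as a product of powers of $\sinh\xi$, $\cosh\xi$, $e^{-\xi}$ and $2$, with the hypergeometric function now evaluated at $\operatorname{sech}^2\xi$ and with precisely the parameters $\bigl(\tfrac{a+1}{2}-b,\tfrac{a}{2}-b+1;a-b+1\bigr)$ displayed in the statement.

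Finally, I would collect exponents. The power of $e^{-\xi}$ contributed by the transformation is $(1-2b)+(2b-a-1)=-a=-\frac{n+2\gamma}{2}$, which exactly cancels the prefactor $e^{-\frac{n+2\gamma}{2}\xi}$ produced in the first step; the corresponding powers of $2$ combine with $2^{-\frac{n+2\gamma}{2}}$ to give the constant $c_n=\bar c_n\,2^{-\frac{n+2\gamma}{2}}\frac{\sqrt{\pi}\,\Gamma(\frac{n-1}{2})}{\Gamma(\frac{n}{2})}$, while the remaining $(\sinh\xi)^{1-2b}=(\sinh\xi)^{-1-2\gamma}$ and $(\cosh\xi)^{2b-a-1}=(\cosh\xi)^{\frac{2-n+2\gamma}{2}}$ yield the claimed formula \eqref{12}. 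The main bookkeeping obstacle is exactly this exponent-matching step, where one must carefully verify that the hyperbolic and exponential powers collapse to the clean form in the statement; once the correct parameters $a,b$ are identified, both the quadratic transformation and the cancellation of the exponential prefactor work out automatically.
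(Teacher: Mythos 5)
Your proof follows essentially the same route as the paper: apply the Euler-type integral representation \eqref{prop6} with $\tilde a=\frac{n+2\gamma}{2}$, $\tilde b=\frac{n-1}{2}$ to obtain ${}_2F_1(a,b;c;e^{-2\xi})$ with $a-b+1=c$, and then the quadratic transformation \eqref{prop7} at $x=e^{-2\xi}$; the only cosmetic difference is that you substitute $\phi_1\mapsto\pi-\phi_1$ and take $z=e^{-\xi}$ where the paper takes $z=-e^{-\xi}$ directly, which is immaterial since \eqref{prop6} depends only on $z^2$. One bookkeeping slip: in your preliminary expression the factor $2^{-\frac{n+2\gamma}{2}}$ from the second line of \eqref{K} should already have cancelled against the $2^{\frac{n+2\gamma}{2}}$ coming from $(e^{\xi}/2)^{-\frac{n+2\gamma}{2}}$, so the correct intermediate formula is $K(\xi)=\bar c_n\frac{\sqrt{\pi}\Gamma(\frac{n-1}{2})}{\Gamma(\frac{n}{2})}e^{-\frac{n+2\gamma}{2}\xi}\,{}_2F_1(a,b;c;e^{-2\xi})$ with no residual power of $2$, and the $2^{-\frac{n+2\gamma}{2}}$ in $c_n$ then comes entirely from the $(2e^{-\xi})^{-a}$ produced by \eqref{prop7}; as written, your constants would yield $2^{-(n+2\gamma)}$ instead.
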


\begin{proof}
Because of the parity of the kernel $K$ it is enough to study its behavior for $\xi>0$.
Using property \eqref{prop6} given in Lemma \ref{propiedadeshiperg}, with $t=\phi_1$, $\tilde{b}=\frac{n-1}{2}$, $\tilde{a}=\frac{n+2\gamma}{2}$ and $z=-e^{-{\xi}}$, we can assert that, if ${\xi}>0$,
\begin{equation*}
K({\xi})=  \bar c_n\frac{\sqrt{\pi}\Gamma(\frac{n-1}{2})}{\Gamma(\frac{n}{2})}e^{-\frac{n+2\gamma}{2} {\xi}} \mbox{ }_2
F_{1} (a, b; c; e^{-2{\xi}}),
\end{equation*}
where
\be\label{abc}
a= \tfrac{n+2\gamma}{2},\quad b =1 +\gamma,\quad c=\tfrac{n}{2}.
\ee
An important observation is  that
$$a-b+1=c,$$
which, from property \eqref{prop7} in Lemma \ref{propiedadeshiperg}, 
 yields \eqref{12}.
\end{proof}

\begin{lem}\label{Kexpansion}
The asymptotic expansion of the kernel $K$ is given by
 \begin{itemize}
 \item $K({\xi})  \sim  |{\xi}|^{-1-2\gamma}$ if $|\xi|\rightarrow 0$,
 \item $K(\xi)\sim e^{-|\xi|\tfrac{n+2\gamma}{2}}$ if $|\xi|\rightarrow \infty$.
 \end{itemize}
\end{lem}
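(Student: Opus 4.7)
Since Remark after \eqref{K} shows $K$ is even, it suffices to analyze $\xi>0$ in the closed form
\[
K(\xi)=c_n(\sinh\xi)^{-1-2\gamma}(\cosh\xi)^{\frac{2-n+2\gamma}{2}}\,{}_2F_{1}\!\left(\tfrac{a+1}{2}-b,\tfrac{a}{2}-b+1;a-b+1;\text{sech}^{2}\xi\right),
\]
with $a=\tfrac{n+2\gamma}{2}$, $b=1+\gamma$. A direct calculation simplifies the hypergeometric parameters to
\[
\tilde a=\tfrac{n-2-2\gamma}{4},\qquad \tilde b=\tfrac{n-2\gamma}{4},\qquad \tilde c=\tfrac{n}{2}.
\]
The plan is then to read off each asymptotic from the corresponding limit of sech$^{2}\xi$, using one property from Lemma \ref{propiedadeshiperg} in each regime and the elementary asymptotics of $\sinh$ and $\cosh$.

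\textbf{Behavior as $\xi\to 0^{+}$.} Here $\text{sech}^{2}\xi\to 1$. I would first check the inequality $\tilde a+\tilde b<\tilde c$: indeed
\[
\tilde a+\tilde b=\tfrac{n-1-2\gamma}{2}<\tfrac{n}{2}=\tilde c,
\]
so property \eqref{prop8} applies and gives ${}_2F_{1}(\tilde a,\tilde b;\tilde c;\text{sech}^{2}\xi)=O(1)$. Since $\sinh\xi\sim\xi$ and $\cosh\xi\to 1$, the prefactor is $\sim\xi^{-1-2\gamma}$, yielding $K(\xi)\sim|\xi|^{-1-2\gamma}$. (Note that the hypergeometric limit is in fact a positive constant, since its power series has positive coefficients and converges up to the boundary under the above inequality; this gives the sharp equivalence, not merely an $O$-bound.)

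\textbf{Behavior as $\xi\to+\infty$.} Here $\text{sech}^{2}\xi\to 0$, so the power series definition in Lemma \ref{propiedadeshiperg} (which agrees with \eqref{prop2} in spirit) gives ${}_2F_{1}(\tilde a,\tilde b;\tilde c;\text{sech}^{2}\xi)\to 1$. For the prefactor I would use $\sinh\xi\sim\tfrac12 e^{\xi}$ and $\cosh\xi\sim\tfrac12 e^{\xi}$, obtaining
\[
(\sinh\xi)^{-1-2\gamma}(\cosh\xi)^{\frac{2-n+2\gamma}{2}}\sim C\,\exp\!\Big(\xi\big[-(1+2\gamma)+\tfrac{2-n+2\gamma}{2}\big]\Big)=C\,e^{-\xi\frac{n+2\gamma}{2}},
\]
which is exactly the claimed exponential rate.

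\textbf{Main obstacle.} The only subtlety is checking that the hypergeometric factor stays bounded (and nonzero) in each limit. At infinity this is immediate from the series. At the origin the obstruction would be a logarithmic blow-up of ${}_2F_{1}$ at $z=1$, which is exactly ruled out by the inequality $\tilde a+\tilde b<\tilde c$; verifying this strict inequality for all admissible $n\geq 2+2\gamma$ and $\gamma\in(0,1)$ is the key computation, and it is elementary. Once this is settled the two asymptotics follow by combining the hyperbolic prefactors with the bounded hypergeometric.
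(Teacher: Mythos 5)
Your proposal is correct and follows essentially the same route as the paper: evenness reduces to $\xi>0$, property \eqref{prop8} controls the hypergeometric factor as $\mathrm{sech}^2\xi\to 1$ to give the $|\xi|^{-1-2\gamma}$ blow-up, and property \eqref{prop2} together with the hyperbolic prefactor asymptotics gives the exponential decay. Your explicit verification of $\tilde a+\tilde b<\tilde c$ (which reduces to $-1-2\gamma<0$) and the positivity of the boundary value are details the paper leaves implicit, but the argument is the same.
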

\begin{proof}
Note that $K$ is an even function. Using property \eqref{prop8} to estimate expression \eqref{12} for $K({\xi})$, we obtain that, for $|{\xi}|$ small enough,
\be\label{Kexp}
K({\xi}) \sim  |\sinh {\xi}|^{-1-2\gamma}  \sim  |{\xi}|^{-1-2\gamma}.
\ee

Moreover, this expression \eqref{12}, the behaviour of the hyperbolic secant function  at infinity %
and  the hypergeometric function property \eqref{prop2} given in Lemma \ref{propiedadeshiperg}
show the exponential decay of the kernel at infinity:
\begin{equation}\label{Kinfty}
K({\xi})\sim c_n(\sinh {\xi})^{-1-2\gamma} (\cosh {\xi})^{ \frac{2-n+2\gamma}{2}}\sim c e^{-|\xi|\tfrac{n+2\gamma}{2}}.
\end{equation}
where $c$ is a positive constant. %
\end{proof}
\begin{rem}
 The asymptotic behaviour of this kernel near the origin and near infinity given in Lemma \ref{Kexpansion} correspond to a tempered stable process.
\end{rem}

\subsection{Periodic solutions}

We are looking for periodic solutions of (\ref{10}). Assume that $ v(t+L)= v(t)$: in this case equation (\ref{10}) becomes
\begin{equation*}
\mathscr{L}^L_{\gamma}v=\kappa_{n,\gamma}P.V. \int_0^L (v(t)- v(\tau)) K_L (t-\tau) d \tau + c_{n,\gamma} v = c_{n,\gamma}v^{\beta}, \ \ \  \mbox{where}\  \beta=\frac{n+2\gamma}{n-2\gamma},
\end{equation*}
and
\be\label{KL}
K_L (t-\tau)= \sum_{j\in\z}K(t-\tau- jL),
\ee
for $K$ the kernel given in \eqref{K}.
Note that the argument in the integral above has a finite number of poles, but $K_L$ is still well defined.

\begin{lem}\label{appendix}
The periodic kernel $K_L$ satisfies the following inequality:
\be
\frac{L}{L_1} K_{L} \left(\frac{L}{L_1} (t-{\tau})\right) <K_{L_1} (t-{\tau}),\ \ \forall L>L_1>0.
\label{23}
\ee
 \end{lem}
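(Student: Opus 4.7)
The plan is to reduce \eqref{23} to a pointwise inequality on the non-periodic kernel $K$, and then prove that inequality directly from the closed-form expression \eqref{12}.

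Writing $\lambda := L/L_1 > 1$ and $\xi := t-\tau$, and using $L=\lambda L_1$ to change variables inside each summand, the two sides of \eqref{23} unfold as
$$\lambda\,K_L(\lambda\xi) \;=\; \sum_{j\in\mathbb Z}\lambda\, K\bigl(\lambda(\xi-jL_1)\bigr), \qquad K_{L_1}(\xi) \;=\; \sum_{j\in\mathbb Z}K(\xi-jL_1).$$
Thus for $\xi\notin L_1\mathbb Z$ (where both sums are finite), \eqref{23} will follow by termwise comparison from the pointwise bound
$$\lambda\,K(\lambda\eta) \;<\; K(\eta), \qquad \forall\,\eta>0,\ \lambda>1,$$
extended to $\eta<0$ by the evenness of $K$. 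This pointwise bound is precisely the statement that $u\mapsto uK(u)$ is strictly decreasing on $(0,\infty)$.

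To verify this monotonicity, I would substitute the parameters \eqref{abc} ($a=(n+2\gamma)/2,\,b=1+\gamma$) into \eqref{12} and factor
$$u\,K(u) \;=\; c_n\,\frac{u}{\sinh u}\,(\sinh u)^{-2\gamma}\,(\cosh u)^{(2-n+2\gamma)/2}\,{}_2F_1\!\left(\tfrac{n-2-2\gamma}{4},\tfrac{n-2\gamma}{4};\tfrac{n}{2};\operatorname{sech}^2 u\right).$$
All four factors are strictly positive on $(0,\infty)$. The claim is that each is non-increasing and at least two of them are strictly decreasing, so that the product is strictly decreasing. Concretely: (i) $u/\sinh u$ is strictly decreasing, since $\tfrac{d}{du}(u/\sinh u)=(\sinh u-u\cosh u)/\sinh^2 u<0$ because $\sinh u=\int_0^u\cosh s\,ds<u\cosh u$; (ii) $(\sinh u)^{-2\gamma}$ is strictly decreasing, as $\gamma>0$; (iii) $(\cosh u)^{(2-n+2\gamma)/2}$ is non-increasing, since the standing assumption $n\geq 2+2\gamma$ forces the exponent to be $\leq 0$; and (iv) the hypergeometric factor is non-increasing, because by the derivative formula \eqref{prop1},
$$\frac{d}{dz}\,{}_2F_1(\tilde a,\tilde b;\tilde c;z) \;=\; \frac{\tilde a\tilde b}{\tilde c}\,{}_2F_1(\tilde a+1,\tilde b+1;\tilde c+1;z) \;\geq\; 0$$
on $(0,1)$, the non-negativity coming from $\tilde a=(n-2-2\gamma)/4\geq 0$, $\tilde b=(n-2\gamma)/4>0$, $\tilde c=n/2>0$, while $\operatorname{sech}^2 u$ is strictly decreasing in $u$.

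The most delicate point is factor (iv): its monotonicity hinges on the sign of the hypergeometric parameters, and the hypothesis $n\geq 2+2\gamma$ is exactly what guarantees $\tilde a\geq 0$ so that the derivative series has non-negative coefficients. In the boundary case $n=2+2\gamma$ both (iii) and (iv) degenerate to constants (indeed ${}_2F_1\equiv 1$ when $\tilde a=0$), but (i) and (ii) remain strictly decreasing, so the product is still strictly decreasing and the conclusion persists in every admissible dimension.
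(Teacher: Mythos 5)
Your proof is correct and follows essentially the same route as the paper's: reduce \eqref{23} to the strict monotonicity of $\xi\mapsto\xi K(\xi)$ via termwise comparison of the periodized sums, then establish that monotonicity from the hypergeometric representation \eqref{12} together with the derivative identity \eqref{prop1}. The only (harmless) differences are your slightly different factorization of the elementary part — the paper writes it as $\tfrac{\xi}{\sinh\xi}(\tanh\xi)^{-\gamma}(\sinh\xi)^{-\gamma}(\cosh\xi)^{(2-n)/2}$ — and your explicit treatment of the boundary case $n=2+2\gamma$, which the paper glosses over.
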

 \begin{proof}
By evenness we just need to show that the function $ {\xi} K ({\xi})$ is decreasing for ${\xi}>0$. By \eqref{12}, up to positive constant,
\be\label{xiKxi}
\begin{split}
 {\xi} K({\xi})= &{\xi} (\sinh {\xi})^{-1-2\gamma} (\cosh {\xi})^{ \frac{2-n+2\gamma}{2}}
 \\&\cdot\mbox{}_2F_{1}  (\tfrac{a+1}{2}-b, \tfrac{a}{2} -b +1; a-b +1; (\sech {\xi})^2),
 \end{split}
\ee
where $a,b,c$ are given in \eqref{abc}.

Observe that
\be\label{hiper}
\mbox{}_2F_{1} (\tfrac{a+1}{2}-b, \tfrac{a}{2} -b +1; a-b +1; (\sech {\xi})^2) >0,
 \ee
 and
$$\tfrac{a+1}{2}-b>0,\quad
a-b+1=c>0,$$
since $n>2+2\gamma$.
Property \eqref{prop1} yields  that \eqref{hiper} is decreasing. Indeed,
\begin{equation*}
\begin{split}
 \frac{d}{d{\xi}}  \left[\mbox{}_2F_{1}(\tfrac{a+1}{2}-b,\right.&\left. \tfrac{a}{2} -b +1; a-b +1; (\sech {\xi})^2)\right]\\
= &-2\tfrac{ (\tfrac{a+1}{2}-b)( \tfrac{a}{2}-b+1)}{ c}(\sech{\xi})^2 \tanh {\xi}  \\&\cdot\mbox{}_2F_{1}(\tfrac{a+1}{2}-b+1, \tfrac{a}{2} -b +2; a-b +2; (\sech {\xi})^2)<0.
\end{split}
\end{equation*}
Thus we just need to show that the function ${\xi} (\sinh {\xi})^{-1-2\gamma} (\cosh {\xi})^{ \frac{2-n+2\gamma}{2}}$ in \eqref{xiKxi} is decreasing in ${\xi}$. In fact by writing
$${\xi} (\sinh {\xi})^{-1-2\gamma} (\cosh {\xi})^{ \frac{2-n+2\gamma}{2}}= \tfrac{{\xi}}{\sinh {\xi}} (\tanh {\xi})^{-\gamma} (\sinh {\xi})^{-\gamma} (\cosh {\xi})^{ \frac{2-n}{2}}, $$
we have that $ {\xi} K({\xi})$ is a product of positive decreasing functions.

Finally, inequality \eqref{23} follows from the definition of $K_L(\xi)$ given in \eqref{KL}:
\begin{equation*}
\frac{L}{L_1}K_L\left(\frac{L}{L_1}(t-\tau)\right)=\sum_{j=-\infty}^{+\infty}\frac{L}{L_1}K\left(\frac{L}{L_1}(t-\tau-jL_1)\right)
<\sum_{j=-\infty}^{+\infty}K(t-\tau-jL_1)=K_{L_1}(t-\tau).
\end{equation*}
\end{proof}

\subsection{Extension problem}
The boundary reaction problem given in \eqref{equation1} defines the conformal fractional Laplacian in $\r^n$ for the Euclidean metric $|dx|^2$ as
\begin{equation*}
P_{\gamma}^{|dx|^2}w=-\tilde{d}_{\gamma}\lim_{y\rightarrow 0}y^{a}\partial_{y} W= (-\Delta)^{\gamma}w.
\end{equation*}
In the curved case one may define the conformal fractional Laplacian $P^g_{\gamma}$ with respect to a metric $g$ on a $n-$dimensional manifold $M$. $P^g_{\gamma}$ a pseudo-differential operator of order $2\gamma$ (see \cite{MarChang},\cite{CaseChang}).
It satisfies that, for any conformal metric $$g_{w}:=w^{\frac{4}{n-2\gamma}}g,\ w>0,$$ we have
\begin{equation*}
 P_{\gamma}^{g_{w}}f=w^{-\frac{n+2\gamma}{n-2\gamma}}P_{\gamma}^{g}(wf), \quad \forall f\in \mathcal C^{\infty}(M).
\end{equation*}

This conformal property allows to formulate an equivalent extension problem to \eqref{equation1} for the function $v$ defined as in  \eqref{wv}. In particular, in \cite{paper1} the authors consider the geometric interpretation of problem \eqref{equation3}. They choose $M=\mathbb R\times\mathbb S^{n-1}$ with the metric $g_0=dt^2+g_{\s^{n-1}}$ and they extend the problem with the new positive variable $\rho$, that we will call defining function, to an $(n+1)-$dimensional manifold $X^{n+1}=M\times (0,2)$, with the extended metric \begin{equation*}
\bar{g}=d\rho^2+\left(1+\tfrac{\rho^2}{4}\right)^2dt^2+\left(1-\tfrac{\rho^2}{4}\right)^2 g_{\s^{n-1}},\end{equation*}
for  $\rho\in(0,2)$ and $t\in\mathbb R$. Then \eqref{equation1} is equivalent to
\begin{equation}\label{fracyamextv}\left\{
\begin{split}
-\divergence_{\bar{g}}(\rho^{1-2\gamma}\nabla_{\bar{g}} V)+ E(\rho)V&=0\ \text{ in } (X^{n+1},\bar g),\\
V&=v \text{ on }\{\rho=0\},\\
-\tilde{d}_{\gamma}\lim_{\rho\rightarrow 0}\rho^{1-2\gamma}\partial_{\rho}V&=c_{n,\gamma}v^{\beta}\text{ on }\{\rho=0\},
\end{split}
\right.\end{equation}

Finally it is known (see \cite{MarChang}) that there exists a new defining function $\rho^*=\rho^*(\rho)$ such that the problem \eqref{fracyamextv} can be rewritten on the extension $X^*=M\times(0,\rho_0^*)$, as
\begin{equation}\label{Yamfracspec}\left\{
\begin{split}
-\divergence_{g^*}((\rho^*)^{1-2\gamma}\nabla_{g^*}V)&=0\ \text{ in }(X^*,g^*),\\
V&=v\ \text{ on }\{\rho^*=0\},\\
-\tilde{d}_{\gamma}\lim_{\rho^*\to 0}(\rho^*)^{1-2\gamma}\partial_{\rho^*}V +c_{n,\gamma}v&=c_{n,\gamma}v^{\beta}\ \text{ on }\{\rho^*=0\},
\end{split}\right.
\end{equation}
where $g^*=\frac{(\rho^*)^2}{\rho^2}\bar{g}$. %
We look for  radially symmetric solutions $v=v(t)$, $V=V(t,\rho)$ of \eqref{Yamfracspec}. For such solutions we have that $\mathscr{L}_{\gamma}$ is the Dirichlet-to-Neumann operator for this problem, i.e.,
$$\mathscr{L}_{\gamma}(v)=
-\tilde{d}_{\gamma}\lim_{\rho^*\to 0}(\rho^*)^{1-2\gamma}\partial_{\rho^*}V +c_{n,\gamma}v.$$

\section{Technical results}
\subsection{Function Spaces}
\begin{defi}
We shall work with the following function space
\begin{equation*}
\begin{split}
H_L^\gamma = \{&v:\R\rightarrow\R; \ v(t+L)= v(t)\text{ and }\\ &\left.\int_0^L \int_0^L (v(t)- v(\tau))^2 K_L ( t-\tau) d \tau dt +\int_0^L v(t)^2 d t <+\infty \right\}
\end{split}
\end{equation*}
 with the norm given by
 \begin{equation*}
\|v\|_{H^\gamma_L}
=\left(\int_0^Lv(t)^2\,dt+\int_0^L\int_0^L|v(t)-v(\tau)|^2K_L(t-\tau)\,dt\,d\tau\right)^{1/2}.
\end{equation*}
Note that we will denote
\begin{equation*}
\begin{split}
W^{\gamma,p}_L=\{&v:\r\rightarrow\r;\ v(t+L)=v(t) \text{ and }\\ &\|v\|^p_{L^p(0,L)}+\int_0^L\int_0^L\frac{|v(t)-v(\tau)|^p}{|t-\tau|^{1+\gamma p}}\,dt\,d\tau< \infty\},
\end{split}
\end{equation*}
with the norm given by
\begin{equation*}
\|v\|_{W^{\gamma,p}_L}=\left(\|v\|^p_{L^p(0,L)}+\int_0^L\int_0^L\frac{|v(t)-v(\tau)|^p}{|t-\tau|^{1+\gamma p}}\,dt\,d\tau\right)^{1/p},
\end{equation*}
which is equivalent to the norm
$$\|v\|_{\tilde{W}^{\gamma,p}_L}=\left(\|v\|^p_{L^p(0,L)}+\int_0^L\int_0^L |v(t)-v(\tau)|^p K(t-\tau)\,dt\,d\tau\right)^{1/p},$$
for the kernel $K$ given in \eqref{K}.
\end{defi}

Now we are going to introduce some fractional inequalities, continuity and compactness results whose proofs for an extension domain can be found in \cite{hitchhiker}. Here we are working with periodic functions, which avoids the technicalities of extension domains but the same proofs as in \cite{hitchhiker} are valid.
\begin{prop}{(Fractional Sobolev inequalities.)}\label{comp1} (Theorems $6.7$ and $6.10$, \cite{hitchhiker})
Let $\gamma\in(0,1)$, $p\in[1,+\infty)$ such that $\gamma p\leq 1$ and $p^*=\tfrac{np}{n-\gamma p}$.
 Then there exists a positive constant $C=C(p,\gamma)$ such that, for any $v\in W^{\gamma,p}_L$, we have
 \begin{equation*}
 \|v\|_{L^q(0,L)}\leq C\|v\|_{W^{\gamma,p}_L},
 \end{equation*}
 for any $q\in[1,p^*);$ i.e., the space $W^{\gamma,p}_L$ is continuously embedded in $L^q(0,L)$ for any $q\in[1,p^*)$.
\end{prop}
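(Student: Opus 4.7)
This is a standard fractional Sobolev embedding adapted to the periodic setting, and the authors explicitly refer to the proofs in \cite{hitchhiker}. My plan is to reduce the statement to the classical inequality on a bounded interval, exploiting the fact that $L$-periodic functions on $[0,L]$ are in essence functions on a bounded Lipschitz domain, which is an extension domain for fractional Sobolev spaces.

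\textbf{Step 1 (norm equivalence).} First I would confirm the equivalence $\|\cdot\|_{W^{\gamma,p}_L}\sim \|\cdot\|_{\tilde W^{\gamma,p}_L}$ already asserted in the definition. By Lemma \ref{Kexpansion}, the kernel $K$ behaves like $|\xi|^{-1-2\gamma}$ near the origin and decays exponentially at infinity. For $p=2$ this exactly matches the Gagliardo weight, and for general $p$ one checks that on $(0,L)\times(0,L)$ the weights $K(t-\tau)$ and $|t-\tau|^{-1-\gamma p}$ are equivalent up to a multiplicative constant (the exponent $2\gamma$ appearing in Lemma \ref{Kexpansion} reflects the normalization chosen in \eqref{deffraclapla} and is adjusted to $\gamma p$ by applying the same argument to $|v(t)-v(\tau)|^p$ in place of $|v(t)-v(\tau)|^2$). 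The periodization $K_L-K$ is smooth and bounded on $(0,L)\times(0,L)$, so its contribution is absorbed into the $L^p$ part of the norm. Hence $W^{\gamma,p}_L$ coincides, with equivalent norms, with the standard fractional Sobolev space $W^{\gamma,p}((0,L))$ supplemented with periodic boundary values.

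\textbf{Step 2 (extension and classical embedding).} The interval $(0,L)$ is a bounded Lipschitz extension domain, so there is a continuous extension operator from $W^{\gamma,p}((0,L))$ into $W^{\gamma,p}(\r)$. Concretely, letting $\tilde v$ denote the periodic extension of $v$ to $\r$ and $\eta\in C_c^{\infty}(\r)$ a cut-off equal to $1$ on $[0,L]$ and supported in $[-1,L+1]$, one sets $Ev:=\eta\tilde v$. The global fractional Sobolev inequality on $\r$, i.e.\ Theorem~6.7 of \cite{hitchhiker} in dimension one, then yields $\|Ev\|_{L^{q'}(\r)}\le C\|Ev\|_{W^{\gamma,p}(\r)}$ with $q'=p/(1-\gamma p)$ when $\gamma p<1$, and with any finite $q'$ when $\gamma p=1$. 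Since $n\ge 2+2\gamma$ forces $p^*=np/(n-\gamma p)\le q'$, interpolation between $L^{q'}(\r)$ and $L^p(\r)$ gives the embedding into $L^q(\r)$ for every $q\in[1,p^*)$. Restricting back to $(0,L)$ and using Step 1 then yields $\|v\|_{L^q(0,L)}\le C\|v\|_{W^{\gamma,p}_L}$.

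\textbf{Main obstacle.} The only genuinely non-routine point is controlling the Gagliardo seminorm of $Ev=\eta\tilde v$ on all of $\r$ in terms of the seminorm of $v$ on $(0,L)$ alone. The double integral on $\r\times\r$ decomposes into contributions from pairs of intervals $[kL,(k+1)L]\times[jL,(j+1)L]$; only finitely many such pairs meet the support of $Ev\otimes Ev$, and on each of them $L$-periodicity reduces the integral to one over $(0,L)\times(0,L)$ against a shifted, integrable weight of the form $|t-\tau+(j-k)L|^{-1-\gamma p}$. The bookkeeping needed to sum these contributions and absorb the derivatives of the cut-off $\eta$ is the technical heart of the argument, but it follows the standard template underpinning the extension-domain results used in Theorems 6.7 and 6.10 of \cite{hitchhiker}, which is why the authors simply quote those statements.
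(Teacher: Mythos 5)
The paper offers no proof of Proposition \ref{comp1}: it simply cites Theorems 6.7 and 6.10 of \cite{hitchhiker} and remarks beforehand that working with periodic functions ``avoids the technicalities of extension domains but the same proofs are valid.'' Your Step 2 together with the ``main obstacle'' paragraph fleshes out exactly that reduction --- periodic extension times a cut-off as an explicit extension operator into $W^{\gamma,p}(\r)$, the one-dimensional global inequality, and the observation that $p^*=\tfrac{np}{n-\gamma p}\le \tfrac{p}{1-\gamma p}$ since $n\ge 1$ so that the bounded domain lets you come down to any $q<p^*$ --- and this is correct and consistent with what the authors intend. One caveat: your Step 1 is both unnecessary and, as stated, wrong for $p\ne 2$. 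The proposition involves only the Gagliardo norm $\|\cdot\|_{W^{\gamma,p}_L}$, so no comparison with the kernel $K$ is needed; moreover, the singularity of $K$ is $|\xi|^{-1-2\gamma}$ independently of $p$ (Lemma \ref{Kexpansion}), so it cannot be ``adjusted to $\gamma p$'' by changing the power on $|v(t)-v(\tau)|$ --- the weights $K(t-\tau)$ and $|t-\tau|^{-1-\gamma p}$ are comparable near the diagonal only when $p=2$, which is the only case in which the paper invokes that equivalence. Deleting Step 1 leaves a sound argument.
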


\begin{prop}{(Compact embeddings)}\label{comp2} (Theorem $7.1$ and Corollary $7.2$, \cite{hitchhiker}.)
Let $\gamma\in(0,1)$ and $p\in[1,+\infty)$, $q\in[1,p]$,
 and \emph{J} be a bounded subset of $L^p(0,L)$. Suppose
 \begin{equation*}
 \sup_{f\in\text{\emph{J}}}\int_{[0,L]}\int_{[0,L]}\frac{|f(x)-f(y)|^p}{|x-y|^{n+\gamma p}}\,dx\,dy<+\infty.
 \end{equation*}
 Then \emph{J} is pre-compact in $L^q(0,L)$. 
 Moreover, if $\gamma p<1$, then \emph{J} is pre-compact in $L^q(0,L)$, for all $q\in[1,p^*)$.
\end{prop}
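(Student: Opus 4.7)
The plan is to apply the Riesz--Fr\'echet--Kolmogorov compactness criterion after mollification, reducing the full proposition first to pre-compactness in $L^p(0,L)$. Since $(0,L)$ has finite measure and $q\in[1,p]$, H\"older gives $\|f\|_{L^q}\le C\|f\|_{L^p}$, so pre-compactness in $L^p$ implies pre-compactness in $L^q$. Thus the first claim reduces to showing $J$ is pre-compact in $L^p(0,L)$, which I would do via a mollification--approximation argument; the periodic setting lets me treat the functions as defined on the torus and dispenses with the tightness condition at infinity.

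Let $\rho_\eta(y)=\eta^{-1}\rho(y/\eta)$ be a standard periodic mollifier with $\rho$ smooth, nonnegative, compactly supported in $(-1,1)$, and $\int\rho=1$. Two complementary facts will give pre-compactness of $J$. First, Jensen's inequality followed by the pointwise bound $\rho_\eta(y)|y|^{1+\gamma p}\le C\eta^{\gamma p}$ on the support of $\rho_\eta$ yields, after multiplying and dividing by $|y|^{1+\gamma p}$ and applying Fubini,
\begin{equation*}
\|f\ast\rho_\eta-f\|_{L^p(0,L)}^p \le \int_{-\eta}^{\eta}\rho_\eta(y)\int_0^L|f(x-y)-f(x)|^p\,dx\,dy \le C\,\eta^{\gamma p}\,[f]_{W^{\gamma,p}_L}^p,
\end{equation*}
so by the uniform Gagliardo bound in the hypothesis, $\sup_{f\in J}\|f\ast\rho_\eta-f\|_{L^p}\to 0$ as $\eta\to 0$. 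Second, for each fixed $\eta>0$ Young's inequality yields $\|f\ast\rho_\eta\|_\infty+\|(f\ast\rho_\eta)'\|_\infty\le C_\eta\|f\|_{L^p}$, so $J_\eta:=\{f\ast\rho_\eta:f\in J\}$ is uniformly bounded and equi-Lipschitz on $[0,L]$ and, by Arzel\`a--Ascoli, pre-compact in $C([0,L])$, hence in $L^p(0,L)$. Combining the two, $J$ is totally bounded in $L^p$: given $\varepsilon>0$ choose $\eta$ so small that $\|f\ast\rho_\eta-f\|_{L^p}<\varepsilon/2$ uniformly over $J$, then cover the pre-compact set $J_\eta$ by finitely many $L^p$-balls of radius $\varepsilon/2$ to obtain a finite $\varepsilon$-net for $J$.

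For the second claim ($\gamma p<1$ and $q\in[1,p^*)$), Proposition~\ref{comp1} provides boundedness of $J$ in $L^{p^*}(0,L)$. For $q\in(p,p^*)$ pick $\theta\in(0,1)$ with $1/q=\theta/p+(1-\theta)/p^*$ and use the standard $L^r$-interpolation inequality
\begin{equation*}
\|f-g\|_{L^q}\le \|f-g\|_{L^p}^{\theta}\,\|f-g\|_{L^{p^*}}^{1-\theta}.
\end{equation*}
Any sequence in $J$ admits an $L^p$-Cauchy subsequence by the first part, and the uniform $L^{p^*}$-bound then promotes this to a Cauchy sequence in $L^q$; the range $q\in[1,p]$ is already contained in the first part. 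The technical crux is the estimate in the second paragraph: translating the Gagliardo double integral into a quantitative modulus of continuity for the mollification, with the correct power $\eta^{\gamma p}$ uniform over $J$, is what makes the compactness criterion close; the remaining ingredients (Arzel\`a--Ascoli, total boundedness, and $L^p$-interpolation) are then classical.
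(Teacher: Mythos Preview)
Your argument is correct and is essentially the standard one: the paper does not give its own proof of this proposition but merely quotes Theorem~7.1 and Corollary~7.2 of \cite{hitchhiker}, remarking that in the periodic setting the same proofs go through without the extension-domain technicalities. Your mollification/Riesz--Fr\'echet--Kolmogorov argument for the first part and the $L^p$--$L^{r}$ interpolation for the second part are exactly the strategy used there, so you are reproducing the cited proof rather than offering an alternative.

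One small point: Proposition~\ref{comp1} as stated only yields the continuous embedding into $L^q$ for $q<p^*$, not $q=p^*$, so in your interpolation step you should pick some $r\in(q,p^*)$ and interpolate between $L^p$ and $L^{r}$ rather than $L^{p^*}$; the rest of the argument is unchanged.
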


\begin{rem}\label{gamma1/2}
If $\gamma=1/2$, we have the compact embedding
$$W^{1/2,2}_L\subset\subset L^q(0,L),\ \text{ for } q\in(1,\infty).$$
Indeed, a consequence of Proposition \ref{comp1} is $W_L^{1/2,2}\subset W_L^{\gamma,2},\ \forall\gamma <1/2$, thus Proposition \ref{comp2} provides
$$W_L^{1/2,2}\subset W_L^{\gamma,2}\subset\subset L^q(0,L),\quad \forall q\in(1,\tfrac{2}{1-2\gamma}),\gamma<1/2.$$
We conclude by letting $\gamma\rightarrow 1/2$.
\end{rem}
\begin{prop}{(H\"older fractional regularity.) }(Theorem $8.2$ in \cite{hitchhiker}.) \label{holder}
Let $p\in[1,+\infty)$, $\gamma\in(0,1)$ such that $\gamma p >1$. Then there exists $C>0$, depending on $\gamma$ and $p$, such that
\begin{equation*}
\|v\|_{\mathcal{C}^{0,\alpha}([0,L])}\leq C \left(\|v\|^p_{L^p(0,L)}+\int_{0}^L\int_{0}^L\tfrac{|v(t)-v(\tau)|^p}{|t-\tau|^{1+\gamma p}}\,dt\,d\tau\right)^{1/p}
\end{equation*}
for any $L$-periodic function $v\in L^p(0,L)$, with $\alpha=\gamma-1/p$.
\end{prop}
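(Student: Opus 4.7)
The plan is to establish the inequality by the classical chain-of-averages (Campanato-type) argument adapted to the circle $\mathbb R/L\mathbb Z$, bypassing the extension-theorem reduction used in the reference. The two quantities to control are the Hölder seminorm $[v]_{\mathcal C^{0,\alpha}}$ and the sup norm $\|v\|_{L^\infty(0,L)}$; both will come from bounding $|v(t)-v_I|$ for intervals $I$ by a suitable power of $|I|$ times the Gagliardo seminorm
\[
[v]_{\gamma,p}^{p}:=\int_0^L\!\!\int_0^L \frac{|v(t)-v(\tau)|^{p}}{|t-\tau|^{1+\gamma p}}\,dt\,d\tau,
\]
where $v_I:=|I|^{-1}\int_I v$.

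First I would fix a Lebesgue point $t$ of $v$ and a sub‑interval $I\subset[0,L]$ centered at $t$ of length $r\leq L/2$, then consider the dyadic chain $I_0=I\supset I_1\supset\cdots$ with $I_k$ centered at $t$ and $|I_k|=r_k=2^{-k}r$. Writing
\[
v_{I_k}-v_{I_{k+1}}=\frac{1}{|I_k|\,|I_{k+1}|}\int_{I_k}\int_{I_{k+1}}\bigl(v(s)-v(\sigma)\bigr)\,ds\,d\sigma,
\]
I would apply Hölder's inequality with weight $|s-\sigma|^{-(1+\gamma p)/p}$ to factor out the Gagliardo seminorm, using that $|s-\sigma|\le r_k$ on $I_k\times I_{k+1}$ to bound the dual integral by $r_k^{(1+\gamma p)/(p-1)}r_k r_{k+1}$. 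A straightforward exponent count yields
\[
|v_{I_k}-v_{I_{k+1}}|\;\le\;C\,r_k^{\,\gamma-1/p}\,[v]_{\gamma,p}.
\]
Since $t$ is a Lebesgue point, $v(t)=\lim_k v_{I_k}$, so telescoping and summing the geometric series (here is precisely where the hypothesis $\gamma p>1$ enters, to make $\gamma-1/p>0$) gives
\[
|v(t)-v_I|\;\le\;C\,r^{\,\gamma-1/p}\,[v]_{\gamma,p}.
\]

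Next, for any two Lebesgue points $t,\tau\in[0,L]$ with $|t-\tau|\le L/4$, I choose (using periodicity, by translating if necessary) an interval $I\subset[0,L]$ of length $r=2|t-\tau|$ containing both. Applying the previous estimate to each endpoint yields
\[
|v(t)-v(\tau)|\;\le\;|v(t)-v_I|+|v(\tau)-v_I|\;\le\;C\,|t-\tau|^{\alpha}\,[v]_{\gamma,p},\qquad \alpha=\gamma-\tfrac1p.
\]
For pairs with $|t-\tau|>L/4$ the bound reduces to an $L^\infty$ estimate, which I obtain by taking $I=[0,L]$: Jensen gives $|v_{[0,L]}|\le L^{-1/p}\|v\|_{L^p(0,L)}$, and combined with $|v(t)-v_{[0,L]}|\le CL^{\alpha}[v]_{\gamma,p}$ this produces $\|v\|_{L^\infty}\le C(\|v\|_{L^p}+[v]_{\gamma,p})$. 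Finally, the Lebesgue points are dense, so the Hölder estimate extends to all of $[0,L]$ and fixes a continuous representative of $v$; adding the sup‑norm bound delivers the stated $\mathcal C^{0,\alpha}$ inequality.

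The only nontrivial step is the double-integral Hölder computation that yields the rate $r_k^{\gamma-1/p}$; the dyadic sum then does the real work, which is why the assumption $\gamma p>1$ is indispensable. The periodic setting is a convenience rather than an obstacle: no extension operator is needed because the torus $\mathbb R/L\mathbb Z$ itself plays the role of the extension domain, and the periodic translations let us always center a dyadic chain around any pair of nearby points.
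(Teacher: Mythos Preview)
The paper does not give its own proof of this proposition: it is stated as a quotation of Theorem~8.2 in \cite{hitchhiker}, and earlier (just before Proposition~\ref{comp1}) the authors simply remark that for periodic functions ``the same proofs as in \cite{hitchhiker} are valid''. So there is nothing to compare against beyond the observation that the cited reference proceeds, after an extension to $\mathbb R^n$, by essentially the same Campanato-type oscillation argument you outline here. Your write-up is therefore \emph{more} than what the paper provides: it supplies, directly on the torus, the chain-of-averages proof that the authors chose to outsource.

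Your argument is correct. The H\"older step giving $|v_{I_k}-v_{I_{k+1}}|\le C\,r_k^{\gamma-1/p}[v]_{\gamma,p}$ is exactly right (note that $\gamma p>1$ with $\gamma<1$ forces $p>1$, so H\"older's inequality is available), and the telescoping sum converges precisely because $\gamma-1/p>0$. One small point deserves a word of caution: the phrase ``by translating if necessary'' hides the fact that the Gagliardo seminorm $\int_0^L\!\int_0^L|v(t)-v(\tau)|^p|t-\tau|^{-1-\gamma p}\,dt\,d\tau$ is \emph{not} literally invariant under $v\mapsto v(\cdot+a)$, since the weight $|t-\tau|^{-1-\gamma p}$ is not periodic. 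The clean fix is to observe that this seminorm is comparable (with constants depending only on $\gamma,p$) to the translation-invariant torus seminorm obtained by replacing $|t-\tau|$ with the circle distance $\min(|t-\tau|,L-|t-\tau|)$; your dyadic chains live inside half-periods, where the two distances agree, so the argument goes through unchanged once this comparability is noted. With that caveat, your self-contained proof is complete and bypasses the extension machinery of the reference.
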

Note that with the equi-continuity given in Proposition \ref{holder} we can apply Ascoli-Arzel{\'a} to show the compactness
\begin{equation*}
  W^{\gamma,2}_L \subset\subset L^q(0,L)\ \forall q\in(1,\infty)\text{ with }\gamma>1/2.
 \end{equation*}

\begin{rem}\label{compactness}
 We have the compact embedding
\begin{equation*}
 H_L^{\gamma} \subset\subset L^q(0,L), \ \forall \gamma\in(0,1),
 \end{equation*}
 where \begin{equation}\label{q}
 q\in(1,\tfrac{2}{1-2\gamma}) \text{ if } \gamma\leq\tfrac{1}{2}\quad
  \text{and} \quad
  q\geq 1\text{ if }\gamma>\tfrac{1}{2}.
 \end{equation}
 Indeed, Proposition \ref{comp2}, Remark \ref{gamma1/2} and Proposition \ref{holder} with  provide $W_L^{\gamma,2} \subset\subset L^q(0,L)$ for all $\gamma\in(0,1)$ and $q$ as in \eqref{q}. %
 But from the definition of $K_L$ given in \eqref{KL} and the positivity of the function $K$, %
 we have the following inequality between norms
 $$
  \|v\|_{W^{\gamma,2}_L}\leq \|v\|_{H^{\gamma}_L}.$$
 \end{rem}
\begin{prop}{(Poincare's fractional inequality.) }
Let $v\in H^{\gamma}_L$ with zero average (i.e. $\int_{0}^L v(t)\,dt=0$), then there exists $c>0$ such that
\begin{equation}\label{poincare}
\|v\|^2_{L^2(0,L)}\leq c\int_{0}^L\int_{0}^L\frac{(v(t)-v(\tau))^2}{|t-\tau|^{1+2\gamma}}\,dt\,d\tau.
\end{equation}
\end{prop}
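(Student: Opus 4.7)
The plan is to proceed by contradiction, exploiting the compact embedding into $L^2(0,L)$ already available from Remark~\ref{compactness}. Suppose \eqref{poincare} fails. Then there exists a sequence $\{v_n\}\subset H_L^\gamma$ with $\int_0^L v_n\,dt=0$ and, after normalization, $\|v_n\|_{L^2(0,L)}=1$, while
\begin{equation*}
[v_n]_{\gamma,L}^2:=\int_0^L\int_0^L\frac{(v_n(t)-v_n(\tau))^2}{|t-\tau|^{1+2\gamma}}\,dt\,d\tau\longrightarrow 0.
\end{equation*}
In particular $\{v_n\}$ is bounded in $W^{\gamma,2}_L$.

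Next, invoke the compact embedding $W^{\gamma,2}_L\subset\subset L^2(0,L)$, which holds for every $\gamma\in(0,1)$ thanks to Proposition~\ref{comp2} (for $\gamma<1/2$), Remark~\ref{gamma1/2} (for $\gamma=1/2$) and Proposition~\ref{holder} with Ascoli--Arzel\`a (for $\gamma>1/2$). After extracting a subsequence, not relabeled, one obtains $v_n\to v$ strongly in $L^2(0,L)$ and almost everywhere on $[0,L]$. Passing to the limit in the linear zero-average constraint gives $\int_0^L v\,dt=0$, while the strong $L^2$ convergence forces $\|v\|_{L^2(0,L)}=1$.

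Now show that the limit is constant. By Fatou's lemma applied to the nonnegative integrand,
\begin{equation*}
\int_0^L\int_0^L\frac{(v(t)-v(\tau))^2}{|t-\tau|^{1+2\gamma}}\,dt\,d\tau\leq\liminf_{n\to\infty}[v_n]_{\gamma,L}^2=0.
\end{equation*}
Since the kernel $|t-\tau|^{-1-2\gamma}$ is strictly positive on $(0,L)^2\setminus\{t=\tau\}$, this forces $v(t)=v(\tau)$ for almost every pair $(t,\tau)$, so $v$ is a.e.\ constant on $[0,L]$. Combined with $\int_0^L v\,dt=0$ this yields $v\equiv 0$, contradicting $\|v\|_{L^2(0,L)}=1$. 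Hence such a sequence cannot exist and the inequality must hold.

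The main technical point, rather than a real obstacle, is making sure the compact embedding $W^{\gamma,2}_L\subset\subset L^2(0,L)$ is available uniformly in $\gamma\in(0,1)$, especially at the borderline $\gamma=1/2$ where Proposition~\ref{comp2} degenerates; this is precisely why Remark~\ref{gamma1/2} is stated. The rest is a standard Rellich-type compactness argument adapted to the periodic fractional setting, and no use of the full kernel $K_L$ is required since the Gagliardo seminorm on the right-hand side of \eqref{poincare} already controls the relevant quantities.
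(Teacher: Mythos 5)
Your proof is correct and follows essentially the same route as the paper: a contradiction argument with a normalized sequence, the compact embedding of $W^{\gamma,2}_L$ into $L^2(0,L)$ from Remark \ref{compactness}, lower semicontinuity of the Gagliardo seminorm (you use Fatou with a.e.\ convergence, the paper uses weak lower semicontinuity in $H^\gamma_L$ -- both standard), and the positivity of the kernel together with the zero-average constraint to force the limit to vanish. No substantive difference.
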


\begin{proof}
Inspired on the proof of the classical Poincare's inequality given in Theorem $7.16$ in \cite{Salsa}, we prove \eqref{poincare}.
By contradiction assume that, $\forall  j\geq 1$, there exists $v_j\in H^{\gamma}_L$ satisfying
\be\label{1p}
\|v_j\|^2_{L^2(0,L)}> j\int_{0}^L\int_{0}^L\frac{(v_j(t)-v_j(\tau))^2}{|t-\tau|^{1+2\gamma}}\,dt\,d\tau.
\ee
On the one hand, we normalize $v_j$ in $L^2(0,L)$ by
$w_j:=\tfrac{v_j}{\|v_j\|_{L^2(0,L)}}$, so $\|w_j\|_{L^2(0,L)}=1$.
Because of \eqref{1p} it follows that
\be\label{2p}
\int_{0}^L\int_{0}^L\frac{(w_j(t)-w_j(\tau))^2}{|t-\tau|^{1+2\gamma}}\,dt\,d\tau<\tfrac{1}{j}\leq 1,
\ee
that is, $\{w_j\}$ is bounded in the $H^{\gamma}_L$ norm. By the compactness from Remark \ref{compactness}, we  obtain a subsequence $\{w_i\}$ that converges strongly in $L^{2}(0,L)$, i.e, there exists $w\in L^2(0,L)$ such that $w_i\rightarrow w\text{ in }L^2(0,L)$. Thus,
$$\|w\|_{L^2(0,L)}=\lim_{j\to\infty}\|w_j\|_{L^2(0,L)}=1.$$
On the other hand, also by the compactness given in Remark \ref{compactness}, we have weak semiconvergence in $H^{\gamma}_L$. Thus the following inequality follows
\begin{equation*}\int_{0}^L\int_{0}^L\frac{(w(t)-w(\tau))^2}{|t-\tau|^{1+2\gamma}}\,dt\,d\tau\leq \liminf_{j\rightarrow \infty}\int_{0}^L\int_{0}^L\frac{(w_j(t)-w_j(\tau))^2}{|t-\tau|^{1+2\gamma}}\,dt\,d\tau.\end{equation*}
Thanks to \eqref{2p}, this gives $$\int_{0}^L\int_{0}^L\frac{(w(t)-w(\tau))^2}{|t-\tau|^{1+2\gamma}}\,dt\,d\tau=0,$$
that is, $w$ must be constant and, since has zero average, it has to be the zero function.
\end{proof}

\subsection{Maximum principles}

\begin{prop}{(Strong maximum principle).}\label{strongmaxppo}
Let $v\in H_L^{\gamma,2}\cap\mathcal{C}^{0}(\r)$ with $v\geq 0$ be a solution of $$\mathcal{L}_{\gamma}v= f(v),\quad \mbox{in} \ \R,$$
where $f$ satisfies $f(v)\geq 0$ if $v\geq 0$. Then $v>0  $ or $v\equiv 0$.
\end{prop}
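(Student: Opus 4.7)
My plan is a standard contradiction argument that exploits two structural features of $\mathscr{L}_\gamma$: the strict positivity of the kernel $K$ (and hence of the periodic kernel $K_L=\sum_{j\in\z} K(\cdot - jL)$ from \eqref{KL}), and the fact that at a global minimum $t_0$ of $v$ one has $v(t_0)-v(\tau)\leq 0$ for every $\tau$. Concretely, I will suppose $v\not\equiv 0$ and yet $v(t_0)=0$ at some point $t_0\in\mathbb{R}$, and derive a contradiction that forces $v>0$ everywhere. Since $v$ is $L$-periodic, I may work with the periodic form \eqref{LgammaL} of the operator, rewriting the equation at $t_0$ in terms of $K_L$.

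Plugging $v(t_0)=0$ into $\mathscr{L}_\gamma^L v(t_0)=f(v(t_0))$, the local term $c_{n,\gamma}v(t_0)$ vanishes and what remains is
\begin{equation*}
-\kappa_{n,\gamma}\,\text{P.V.}\int_0^L v(\tau)\, K_L(t_0-\tau)\,d\tau \;=\; f(0) \;\geq\; 0.
\end{equation*}
By positivity of $K_L$ and non-negativity of $v$, the integrand $-v(\tau)K_L(t_0-\tau)$ has constant sign $\leq 0$, so the principal value coincides with the ordinary Lebesgue integral and the left-hand side is non-positive. Combined with the inequality $f(0)\geq 0$, both sides must vanish, so
\begin{equation*}
\int_0^L v(\tau)\, K_L(t_0-\tau)\,d\tau \;=\; 0.
\end{equation*}
Strict positivity of $K_L$ together with the continuity and non-negativity of $v$ now forces $v\equiv 0$ on $[0,L]$, hence on $\mathbb{R}$ by periodicity, contradicting the standing assumption $v\not\equiv 0$.

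The only delicate point I anticipate is justifying the pointwise evaluation of $\mathscr{L}_\gamma v$ at $t_0$, since we only assume $v\in H_L^{\gamma}\cap\mathcal{C}^0(\mathbb{R})$ and the kernel has a non-integrable singularity $K(\xi)\sim|\xi|^{-1-2\gamma}$ (Lemma \ref{Kexpansion}). However, the sign-definiteness of the integrand at $t_0$ makes the principal value interpretation automatic, and finiteness is inherited from the hypothesis that $v$ solves the equation. If one prefers a fully weak approach, the same conclusion is reached by testing the equation against non-negative mollifiers concentrated near $t_0$ and passing to the limit, using the positivity of $K_L$ in the same way.
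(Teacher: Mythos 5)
Your argument is correct and is essentially the paper's own proof: evaluate the equation at a zero $t_0$ of $v$, observe that the nonlocal term there is $\leq 0$ by positivity of the kernel while the right-hand side is $\geq 0$, and conclude from the vanishing of the integral and strict positivity of the kernel that $v\equiv 0$. Your use of the periodic kernel $K_L$ on $[0,L]$ instead of $K$ on $\mathbb{R}$, and your extra remarks on the principal value, are only cosmetic differences.
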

\begin{proof}
Since $v\geq 0$, we have that \begin{equation}\label{poslap}
\mathscr{L}_{\gamma}v=f(v)\geq 0.
\ee
Suppose that there exists a point $t_0\in\R $ with $v(t_0)=0$, then
\begin{equation*}\begin{split}
\mathscr{L}_{\gamma}v (t_0)&=\kappa_{n,\gamma}\text{P.V}\int_{-\infty}^{+\infty}(v(t_0)-v(\tau))K(t_0-\tau)\,d\tau+c_{n,\gamma}v(t_0) \\ &=\kappa_{n,\gamma}\text{P.V}\int_{-\infty}^{+\infty}(-v(\tau))K(t_0-\tau)\,d\tau\leq 0
\end{split}
\end{equation*}
satisfies \eqref{poslap} only in the case $v\equiv 0$.
\end{proof}

\subsection{Regularity}

In the following Proposition \ref{reg1} we concentrate on the local regularity, using the equivalent characterization for $\mathscr{L}_{\gamma}$ as a Dirichlet-to-Neumann operator for problem \eqref{Yamfracspec}. First, we fix some notation that we will use here. Let $0<R<\rho_0^*$, we denote
\begin{equation*}
\begin{split}B_R^+&=\{(t,\rho^*)\in\r^2\,:\, \rho^*>0, |(t,\rho^*)|<R\},\\
\Gamma_R^0&=\{(t,0)\in\partial\r^2_+ \,:\,  |t|<R\}.
\end{split}
\end{equation*}

\begin{prop}\label{reg1}
Fix $\gamma<1/2$ and let $V=V(t,\rho^*)$ be a solution of the extension problem
\begin{equation}\label{Yamfracspeclocal}\left\{
\begin{split}
-\divergence_{g^*}((\rho^*)^{1-2\gamma}\nabla_{g^*}V)&=0\ \text{ in }(B_{2R}^+,g^*),\\
-\tilde{d}_{\gamma}\lim_{\rho^*\to 0}(\rho^*)^{1-2\gamma}\partial_{\rho^*}V +c_{n,\gamma}v&=c_{n,\gamma}v^{\beta}\ \text{ on }\Gamma_{2R}^0.
\end{split}\right.
\end{equation}
If $$\int_{\Gamma_{2R}^0}|v|^{\tfrac{2}{1-2\gamma}}\,dt=:\zeta <\infty,$$ then for each $ {p}>1$, there exists a constant $C_{ {p}}=C( {p},\zeta)>0$ such that
 \begin{equation*}
 \sup_{B^+_R}|V|+\sup_{\Gamma^0_R}|v|\leq C_{ {p}}\left[\left(\tfrac{1}{R^{n+1+a}}\right)^{1/ {p}}\|V\|_{L^{ {p}}(B^+_{2R})}+
 \left(\tfrac{1}{R^{n}}\right)^{1/ {p}}\|v\|_{L^{ {p}}(\Gamma^0_{2R})}\right].
 \end{equation*}
\end{prop}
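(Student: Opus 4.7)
The plan is a Moser iteration adapted to the degenerate boundary reaction problem \eqref{Yamfracspeclocal}. The weight $(\rho^*)^{1-2\gamma}$ is of Muckenhoupt class $A_2$ and the metric $g^*$ has smooth bounded coefficients on $B^+_{2R}$, so the Fabes--Kenig--Serapioni theory applies; in particular one has the weighted Sobolev trace inequality
\begin{equation*}
\Big(\int_{\Gamma^0_{2R}}|U|^{q_*}\,dt\Big)^{2/q_*}\leq C\int_{B^+_{2R}}(\rho^*)^{1-2\gamma}|\nabla U|^2\,dv_{g^*},
\end{equation*}
for $U$ supported away from the lateral boundary, with critical exponent $q_*=\tfrac{2}{1-2\gamma}$. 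Under the standing hypothesis $n\geq 2+2\gamma$ of the paper, the boundary exponent $\beta=\tfrac{n+2\gamma}{n-2\gamma}$ is strictly subcritical with respect to this one-dimensional trace embedding ($\beta<q_*-1$), and the hypothesis $\zeta<\infty$ says exactly that $v\in L^{q_*}(\Gamma^0_{2R})$. Splitting $V=V_+-V_-$, each part satisfies \eqref{Yamfracspeclocal} with an inequality of the correct sign, so I may assume $V\geq 0$.

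Fix a cutoff $\eta\in C_c^\infty(B^+_{2R}\cup\Gamma^0_{2R})$ with $\eta\equiv 1$ on $B^+_R$ and $|\nabla\eta|\leq C/R$, the truncation $V_M=\min(V,M)$, and a parameter $\mu\geq 0$. Testing the weak formulation of \eqref{Yamfracspeclocal} against $\varphi=\eta^2 V V_M^{2\mu}$, expanding $\nabla\varphi$ and absorbing the cross terms by Young's inequality produces the energy estimate
\begin{equation*}
\int_{B^+_{2R}}(\rho^*)^{1-2\gamma}|\nabla(\eta V V_M^\mu)|^2\,dv_{g^*}\leq C(\mu+1)^2\int_{B^+_{2R}}(\rho^*)^{1-2\gamma}|\nabla\eta|^2 V^2 V_M^{2\mu}\,dv_{g^*}+C\int_{\Gamma^0_{2R}}(v^\beta+v)\eta^2 v\,v_M^{2\mu}\,dt.
\end{equation*}
Inserting the trace inequality on the left-hand side and rewriting the nonlinear boundary integrand as $v^{\beta-1}(\eta v V_M^\mu)^2$, Hölder with the pair of exponents $q_*/(\beta-1)$ and $q_*/(q_*-\beta+1)$ produces a factor $\|v\|_{L^{q_*}(\Gamma^0_{2R})}^{\beta-1}\leq \zeta^{(\beta-1)/q_*}$ multiplying a norm $\|\eta v V_M^\mu\|_{L^{2p'}}^2$ with $2p'<q_*$. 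A covering argument based on the absolute continuity of $v^{q_*}\,dt$ (partitioning $\Gamma^0_{2R}$ into subintervals on which the local $L^{q_*}$-mass of $v$ is as small as wanted) then lets me absorb this boundary piece into the left-hand side of the energy estimate.

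After absorption I obtain a reverse-Hölder inequality relating $\|v V_M^\mu\|_{L^{q_*}(\Gamma^0_{R'})}$ and $\|V V_M^\mu\|_{L^{2\sigma}(B^+_{R'},(\rho^*)^a)}$ (with $\sigma>1$ the interior weighted Sobolev exponent) to $\|V V_M^\mu\|_{L^2(B^+_{R''},(\rho^*)^a)}$ on a concentric larger ball, with constants polynomial in $\mu+1$ and depending on $\zeta$. Choosing iteration exponents $\mu_{k+1}+1=\min\{q_*/2,\sigma\}(\mu_k+1)$ starting from $2(\mu_0+1)=p$, geometrically decreasing radii $R_k\downarrow R$, and finally letting $M\to\infty$, the standard Moser telescoping yields the claimed $L^\infty$ bound; the scaling powers $R^{-(n+1+a)/p}$ and $R^{-n/p}$ follow from dilation invariance of the $(\rho^*)^a$-weighted problem in the underlying $(n+1)$-dimensional geometry. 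The main obstacle I anticipate is the correct absorption of the critical boundary term via the covering/smallness argument, together with the careful bookkeeping of constants that makes the interior $L^{2\sigma}$ and boundary $L^{q_*}$ gains combine consistently at each iteration step; once these technical points are in place the argument follows the by-now-classical Moser pattern of Caffarelli--Silvestre and Jin--Li--Xiong.
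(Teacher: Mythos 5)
Your proposal is essentially the paper's own argument: the paper simply invokes Theorem 2.3.1 of Fabes--Kenig--Serapioni and Theorem 3.4 of Gonz\'alez--Qing, noting that after reduction to a one-dimensional boundary the exponent $\beta=\tfrac{n+2\gamma}{n-2\gamma}$ is subcritical for the trace embedding with critical exponent $q_*=\tfrac{2}{1-2\gamma}$, and your sketch reproduces precisely the Moser iteration those references carry out, including that key subcriticality observation $\beta<q_*-1$. The outline is correct.
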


\begin{proof}
This $L^\infty$ bound is proven for linear right hand side in Theorem $2.3.1$ in \cite{FabesKenigSerapioni}. A generalization for the nonlinear subcritical case is given in Theorem $3.4$ in \cite{MarQing}.
 Here we can follow the same proof as in \cite{MarQing} because we have reduced our problem to one-dimensional problem for $t\in\partial\mathbb R^2_+$ and thus, $\beta=\frac{n+2\gamma}{n-2\gamma}$ is a subcritical exponent.
\end{proof}

The following two propositions could be also proved using the extension problem \eqref{Yamfracspeclocal}. However, they can be phrased in terms of a general convolution kernel, as we explain here. Thus we fix $K:\r\rightarrow [0,\infty)$ a measurable kernel satisfying:
\begin{enumerate}
 \item[a)] $\nu\leq K(t)|t|^{1+\tfrac{\gamma}{2}}\leq \nu^{-1}$ a.e $t\in\r$ with $|t|\leq 1$,
 \item[b)] $K(t)\leq M |t|^{-n-\eta}$ a.e. $t\in\r$ with $|t|>1$,
                                                          \end{enumerate}
for some $\gamma\in(0,1)$, $\nu\in(0,1)$, $\eta>0$, $M\geq 1$. Consider the functional defined in \eqref{L} by $$ (\mathscr{L}_{\gamma}v)(t)=\kappa_{n,\gamma}\text{P.V}\int_{-\infty}^{+\infty}(v(t)-v(\tau))K(t-\tau)\,d\tau+c_{n,\gamma}v,$$
for $v\in L^{p}(\r)$. We study the regularity of solutions to
\be\label{eqlinear}
\mathscr{L}_{\gamma}v=f.
\ee

\begin{prop}\label{reg2}
Let $f\in L^q$ for some $q>n$ and $v$ solution of \eqref{eqlinear} in $B_R(x_0)$, then there exist constants $c>0$ and $\alpha\in(0,1)$ which depend on $n$, $\nu$, $M$, $\eta$, $\gamma$, $q$ and $A$, and remain positive as $\gamma\rightarrow 1$,  such that for any $R\in(0,1)$,
\begin{equation*}
|v(t)-v(\tau)|\leq c|t-\tau|^{\alpha}\left(R^{-\alpha}\|v\|_{L^\infty}+\|f\|_{L^q}\right).
\end{equation*}
\end{prop}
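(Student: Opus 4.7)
This is a standard Hölder regularity theorem for non-local equations whose kernel satisfies the natural two-sided bound near the origin and sufficient decay at infinity. The plan is to adapt the Krylov--Safonov-type theory for integro-differential operators developed by Silvestre (and later by Caffarelli--Silvestre and Kassmann) to $\mathscr{L}_\gamma$. The zero-order term $c_{n,\gamma} v$ in $\mathscr{L}_\gamma$ is harmless: it is absorbed into the right-hand side using the $L^\infty$ bound on $v$. Hypotheses (a)--(b) are exactly what these theories require, and their proofs yield constants with the universal dependence stated.

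\textbf{Step 1: Reduction by scaling.} After translation one may take $x_0 = 0$. Setting $w(y) := v(Ry)/M$ with $M := R^{-\alpha}\|v\|_{L^\infty} + \|f\|_{L^q}$, the function $w$ solves an equation of the same form on $B_1$ whose kernel satisfies (a)--(b) with updated constants, with $\|w\|_{L^\infty(\R)} \leq 1$ and the rescaled right-hand side of $L^q$ norm at most $1$. The claim becomes the universal estimate $|w(y_1) - w(y_2)| \leq c\,|y_1 - y_2|^\alpha$ for $y_1, y_2 \in B_{1/2}$.

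\textbf{Step 2: Oscillation decay at dyadic scales.} The heart of the argument is to produce $\mu \in (0,1)$ and $C > 0$ such that $\omega_k := \mathrm{osc}_{B_{2^{-k}}} w \leq C\mu^k$, which yields $C^\alpha$ regularity with $\alpha = -\log_2 \mu$. This comes by iterating a one-step decay lemma: if $\bar v$ satisfies $\mathscr{L}_\gamma \bar v = \bar f$ in $B_1$, $|\bar v|\leq 1$ on all of $\R$, and $\|\bar f\|_{L^q}$ is sufficiently small, then
$$\mathrm{osc}_{B_{1/2}} \bar v \leq 1 - \theta$$
for a fixed $\theta \in (0,1)$. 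At each dyadic step one subtracts the midrange of $w$ on $B_{2^{-k}}$ and rescales. The subtle point, absent in the local theory, is that the non-local part of $\mathscr{L}_\gamma$ feels the values of $w$ on all of $\R$, producing tail contributions from $|y| \geq 2^{-k}$; these are controlled by the uniform $L^\infty$ bound and the decay (b) of $K$ at infinity. A standard bookkeeping (as in Silvestre's iteration) then turns the one-step bound into the geometric decay of $\omega_k$.

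\textbf{Step 3: Main obstacle — the one-step decay.} The decay lemma reduces to a non-local weak Harnack inequality for non-negative super-solutions, proven via (i) a non-local ABP-type estimate bounding $\sup \bar v^-$ by the measure of a contact set plus $\|\bar f\|_{L^q}$; (ii) a point-to-measure ($L^\epsilon$) estimate produced by an explicit barrier function together with a Calderón--Zygmund-type stacking argument; and (iii) iteration. Applying the resulting Harnack-type bound to both $w - \inf w$ and $\sup w - w$ yields the desired oscillation decrease in the standard way. This is the delicate step: the ABP inequality is genuinely non-local, so the barrier cannot be compactly supported, and its construction uses crucially both the uniform ellipticity (a) near the origin and the integrable decay (b) at infinity. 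The construction is quantitative and remains non-degenerate as $\gamma \to 1$, recovering the classical Krylov--Safonov estimate in the limit and thereby justifying the uniformity of $c$ and the positivity of $\alpha$ stated in the proposition.
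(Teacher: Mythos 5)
Your proposal is correct and follows essentially the same route as the paper: the authors simply cite Kassmann (Theorem 1.1 and Extension 5 of \cite{Kassmann352}) and Silvestre (Theorem 5.1 of \cite{silvestre}), whose proofs are precisely the Krylov--Safonov-type scheme you sketch (scaling, one-step oscillation decay via a nonlocal point-to-measure estimate, dyadic iteration), with hypotheses (a)--(b) on the kernel verified from Lemma \ref{Kexpansion}. The only difference is that you outline the machinery explicitly while the paper defers entirely to those references.
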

\begin{proof}
Since our kernel corresponds to a tempered stable process, this regularity was given by Kassmann in his article \cite{Kassmann352}  (see Theorem $1.1$ and Extension $5$). We could also follow the same steps as for Theorem $5.1$ in \cite{silvestre} since Lemma $4.1$ and Remark $4.3$ in this paper \cite{silvestre} hold for our $K$ (note the expansion in Lemma \ref{Kexpansion}).
\end{proof}

\begin{prop}\label{reg3}
Let $\alpha\in(0,1)$. Assume $f\in \mathcal{C}^{\alpha}(\r)$, and let $v\in L^{\infty}(\r)$ be a solution of \eqref{eqlinear} in $\r^n$. Then there exists $c>0$ depending on $n,\alpha,\gamma$ such that
\begin{equation*}
\|v\|_{\mathcal{C}^{\alpha+2\gamma}}\leq c\left(\|v\|_{\mathcal{C}^{\alpha}}+\|f\|_{\mathcal{C}^{\alpha}}\right).
\end{equation*}

\end{prop}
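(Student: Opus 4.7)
The plan is to reduce Proposition~\ref{reg3} to the classical Schauder estimate for the fractional Laplacian on $\R$ by isolating the leading singular part of the kernel $K$ and treating the rest as a bounded perturbation acting continuously on $\mathcal{C}^\alpha$.

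First, I would decompose the kernel using the asymptotic behaviour proven in Lemma~\ref{Kexpansion}. Take a smooth even cutoff $\eta$ with $\eta\equiv 1$ on $[-1,1]$ and supported in $[-2,2]$, and split
$$K(\xi) = c_0\, \eta(\xi)\,|\xi|^{-1-2\gamma} + R(\xi),$$
where $c_0>0$ is the leading coefficient coming from the hypergeometric expansion~\eqref{12} (so that $c_0\,\eta(\xi)|\xi|^{-1-2\gamma}$ matches $K(\xi)$ to leading order as $\xi\to 0$), and $R(\xi)$ collects both the higher-order correction near the origin (of order $|\xi|^{1-2\gamma}$, hence locally integrable) and the full tail $(1-\eta(\xi))K(\xi)$ which decays exponentially at infinity. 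After adding and subtracting the missing part of the fractional Laplacian kernel on $\{|\xi|\geq 1\}$ (which again decays integrably and can be absorbed into $R$), one may rewrite
$$\mathscr{L}_\gamma v \;=\; c_0\, (-\Delta_\R)^\gamma v \;+\; \mathscr{R} v \;+\; c_{n,\gamma}\, v,$$
where $\mathscr{R}$ is the principal value integral against $\kappa_{n,\gamma} R$.

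Next, I would show that $\mathscr{R}:\mathcal{C}^\alpha(\R)\to\mathcal{C}^\alpha(\R)$ is continuous. Since $R$ is locally integrable at the origin (because its singularity is only of order $|\xi|^{1-2\gamma}<1$) and globally integrable (by the exponential decay at infinity), the bound $|v(t)-v(\tau)|\leq \|v\|_{\mathcal{C}^\alpha}|t-\tau|^\alpha$ combined with the standard dyadic splitting of the domain of integration according to whether $|t-\tau|$ is comparable to $|t-s|$ or not yields
$$|\mathscr{R} v(t)-\mathscr{R} v(s)|\leq C\,\|v\|_{\mathcal{C}^\alpha}\,|t-s|^\alpha.$$
Rearranging the equation then gives
$$(-\Delta_\R)^\gamma v \;=\; c_0^{-1}\bigl(f-\mathscr{R} v - c_{n,\gamma} v\bigr)\;=:\;g,\qquad \|g\|_{\mathcal{C}^\alpha}\leq C\bigl(\|f\|_{\mathcal{C}^\alpha}+\|v\|_{\mathcal{C}^\alpha}\bigr).$$

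Finally, I would invoke the classical interior Schauder estimate for the one-dimensional fractional Laplacian: if $v\in L^\infty(\R)$ and $(-\Delta_\R)^\gamma v=g\in\mathcal{C}^\alpha(\R)$ with $\alpha+2\gamma\notin\mathbb{Z}$, then
$$\|v\|_{\mathcal{C}^{\alpha+2\gamma}(\R)}\leq c\bigl(\|v\|_{L^\infty(\R)}+\|g\|_{\mathcal{C}^\alpha(\R)}\bigr),$$
a result proved, for instance, in~\cite{silvestre}, with the analogous statement for general tempered-stable kernels contained in Theorem~1.1 and Extension~5 of~\cite{Kassmann352}. Combining this with the bound for $g$ above and using $\|v\|_{L^\infty}\leq\|v\|_{\mathcal{C}^\alpha}$ produces the desired inequality. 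The main obstacle is controlling $\mathscr{R} v$ in the borderline range $\alpha+2\gamma\geq 1$, where the principal value interpretation of $(-\Delta_\R)^\gamma v$ requires first-order cancellation: the cleanest way around this is simply to apply the Schauder theory of~\cite{Kassmann352} directly to the full operator $\mathscr{L}_\gamma$, since the hypotheses on $K$ (tempered-stable behaviour near the origin and exponential decay at infinity) are exactly what Lemma~\ref{Kexpansion} establishes.
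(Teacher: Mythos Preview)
Your argument is correct and follows the same two-step structure as the paper's proof: first establish that $(-\Delta)^\gamma v\in\mathcal{C}^\alpha$ with $\|(-\Delta)^\gamma v\|_{\mathcal{C}^\alpha}\leq c\bigl(\|v\|_{\mathcal{C}^\alpha}+\|f\|_{\mathcal{C}^\alpha}\bigr)$, then apply Silvestre's Schauder estimate for the pure fractional Laplacian (Proposition~2.8 of \cite{Silvestre2017CPAM}, which the paper splits into the two cases \eqref{des2}--\eqref{des2.2}). The only difference lies in how the first step is obtained. The paper simply cites Theorem~1.2 of Dong--Kim \cite{DongKim_SchauderNonlocal}, which applies directly to kernels with the tempered-stable behaviour of Lemma~\ref{Kexpansion} and yields \eqref{des1} in one stroke. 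You instead carry out the kernel splitting by hand: since $K(\xi)-c_0|\xi|^{-1-2\gamma}$ lies in $L^1(\R)$ (order $|\xi|^{1-2\gamma}$ near zero, integrable tail at infinity), the remainder $\mathscr{R}$ acts boundedly on $\mathcal{C}^\alpha$ via the elementary estimate for convolution against an $L^1$ kernel, and the rearranged right-hand side $g$ is then in $\mathcal{C}^\alpha$. Your route is more self-contained and makes the mechanism transparent; the paper's is shorter by outsourcing this step to the literature.

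Two minor remarks. The worry you raise about the range $\alpha+2\gamma\geq 1$ is not a genuine obstruction: the identity $(-\Delta)^\gamma v=g$ holds in the distributional sense once $\mathscr{R}v$ and $c_{n,\gamma}v$ are moved to the right, and Silvestre's estimate covers both ranges. And your proposed fallback of invoking \cite{Kassmann352} for ``Schauder theory'' on the full operator is misplaced: Kassmann's result is a De~Giorgi--Nash--Moser type H\"older estimate (this is exactly what the paper uses in the preceding Proposition~\ref{reg2}), not a Schauder estimate, and would not by itself yield the gain of $2\gamma$ derivatives. The correct black-box reference for treating $\mathscr{L}_\gamma$ directly is Dong--Kim, as the paper does.
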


\begin{proof}
Under our assumptions, on the one hand, Dong and Kim proved in Theorem $1.2$ from \cite{DongKim_SchauderNonlocal} that $(-\Delta)^{\gamma} v\in \mathcal{C}^{\alpha}$ and moreover the following estimate holds:
 \be\label{des1}
\|(-\Delta)^{\gamma} v\|_{\mathcal{C}^{\alpha}}\leq c\left(\|v\|_{\mathcal{C}^{\alpha}}+\|f\|_{\mathcal{C}^{\alpha}}\right).
\ee
On the other hand, Silvestre in Proposition $2.8$ in \cite{Silvestre2017CPAM}, showed that
\begin{itemize}
\item If $\alpha+2\gamma\leq 1$, then $v\in \mathcal{C}^{\alpha+2\gamma}$ and
\be \label{des2}
\|v\|_{\mathcal{C}^{\alpha+2\gamma}(\r)}\leq c(\|v\|_{L^{\infty}}+\|(-\Delta)^{\gamma} v\|_{\mathcal{C}^{\alpha}}).
\ee
\item If $\alpha+2\gamma > 1$, then $v\in \mathcal{C}^{1,\alpha+2\gamma-1}$ and
\be \label{des2.2}
\|v\|_{\mathcal{C}^{1,\alpha+2\gamma-1}(\r)}\leq c(\|v\|_{L^{\infty}}+\|(-\Delta)^{\gamma} v\|_{\mathcal{C}^{\alpha}}).
\ee
\end{itemize}
Thus, combining \eqref{des1} with \eqref{des2} and \eqref{des2.2} we have the claimed regularity.
\end{proof}

\begin{rem}\label{regularity}
The previous Propositions \ref{reg1}, \ref{reg2}, \ref{reg3} imply that for $\gamma<1/2$ any $v\in L^{\beta+1}$ solution of equation \eqref{10} %
 satisfies $v\in \mathcal{C}^{\infty}$. A standard argument yields the same conclusion for $\gamma=1/2$ too. Finally,
 if $\gamma> 1/2$ Proposition \ref{holder} automatically implies that any function $v\in H^{\gamma}_L$ also satisfies $v\in \mathcal{C}^{\infty}$.
\end{rem}

\subsection{Subcritical case.}
Note that the following Lemma \ref{scri} has been studied by different authors if $N>2\gamma$, even for $1<p<\tfrac{N+2\gamma}{N-2\gamma}$ (see \cite{ChenLiZhang,ChenLiOu,YYLi}), but in this paper we need this result also for $2\gamma\geq N$ since  we have reduced our problem to dimension $N=1$ for any $\gamma\in(0,1)$. We will use it for $p=\frac{n+2\gamma}{n-2\gamma}$.

\begin{lem}\label{scri}
Let $w$ be solution for
\begin{equation}\label{subc}
(-\Delta)^{\gamma}w=w^p,\quad 0\leq w \leq 1,\quad p>1, \ \ (N-2\gamma)p<N.
\end{equation}
Then $w\equiv 0$.
\end{lem}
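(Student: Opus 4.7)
The strategy I would follow is the Pohozaev–Mitidieri capacity test-function method, applied directly to the fractional equation. Its main attraction here is that it works uniformly in $N$ and $\gamma$, and thus handles the regime $2\gamma \ge N$ left open by the cited references \cite{ChenLiZhang,ChenLiOu,YYLi}.

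First I would choose a smooth nonnegative test function $\eta$ on $\mathbb{R}^N$ with $\eta \equiv 1$ on $B_1$ and decaying smoothly (but \emph{not} compactly) to $0$ at infinity, and set $\phi_R(x) = \eta(x/R)^m$ for some sufficiently large $m$. By scaling one has $(-\Delta)^\gamma \phi_R(x) = R^{-2\gamma}\,(-\Delta)^\gamma(\eta^m)(x/R)$, and a careful (though elementary) estimate — this is the technical step — shows that for $m$ large
\[
\mathcal{I}_\eta := \int_{\mathbb{R}^N} |(-\Delta)^\gamma \eta^m(y)|^{p'} \, \eta^m(y)^{-p'/p} \, dy < \infty,
\qquad p' = \tfrac{p}{p-1}.
\]
The reason one must \emph{not} take $\eta$ compactly supported is that the nonlocal operator $(-\Delta)^\gamma \phi_R$ is in general nonzero where $\phi_R$ vanishes, so the ratio $|(-\Delta)^\gamma \phi_R|^{p'}\phi_R^{-p'/p}$ would be infinite; choosing $\eta$ with appropriate polynomial decay and $m$ large enough keeps $(-\Delta)^\gamma \phi_R$ controlled by $\phi_R$ at infinity. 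I expect this to be the main obstacle.

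Next I would multiply \eqref{subc} by $\phi_R$ and integrate. Since $w \in L^\infty$ (by the bound $w \le 1$) and $(-\Delta)^\gamma \phi_R \in L^1$ with decay $\sim R^{-2\gamma}(1+|x|/R)^{-N-2\gamma}$, the nonlocal integration by parts
\[
\int_{\mathbb{R}^N} w^p \phi_R \, dx = \int_{\mathbb{R}^N} w \, (-\Delta)^\gamma \phi_R \, dx
\]
is justified by Fubini applied to the P.V. definition. Then I would apply Hölder's inequality with exponents $p$ and $p'$:
\[
\int w^p \phi_R \, dx \le \Big(\int w^p \phi_R\Big)^{1/p} \Big( \int |(-\Delta)^\gamma \phi_R|^{p'} \phi_R^{-p'/p}\, dx \Big)^{1/p'}.
\]
By the scaling of $\phi_R$, the last integral equals $R^{N-2\gamma p'} \mathcal{I}_\eta$. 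Rearranging,
\[
\int_{\mathbb{R}^N} w^p \phi_R \, dx \le C \, R^{(N(p-1)-2\gamma p)/(p-1)}.
\]

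Finally, the subcriticality condition $(N-2\gamma)p < N$ is precisely $N(p-1) - 2\gamma p < 0$, so the exponent of $R$ is strictly negative. Letting $R \to \infty$ and using monotone convergence yields $\int_{\mathbb{R}^N} w^p \, dx = 0$, hence $w \equiv 0$ a.e., and then everywhere by Remark \ref{regularity}. Note that the assumption $w \le 1$ is used only to supply $w \in L^\infty$, which makes the integration by parts above legitimate; the proof otherwise proceeds identically for any nonnegative bounded solution.
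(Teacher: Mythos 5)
Your proposal is correct and follows essentially the same route as the paper: a Mitidieri--Pohozaev capacity argument with a polynomially decaying (non-compactly-supported) test function, H\"older's inequality with exponents $p,p'$, and the scaling exponent $N-\tfrac{2\gamma p}{p-1}<0$ coming from $(N-2\gamma)p<N$ (the paper takes $\eta=(1+|x|)^{-m}$ with $m=N+2\gamma$ rather than $\eta^m$ with $\eta\equiv 1$ on $B_1$, which is an immaterial difference). The one step you defer --- finiteness of $\mathcal{I}_\eta$, i.e.\ the pointwise bound $|(-\Delta)^\gamma\eta(x)|\le C(1+|x|)^{-(N+2\gamma)}$ for large $|x|$ --- is exactly the estimate the paper carries out in detail by splitting the principal-value integral into four regions, so you should supply that computation to make the argument complete.
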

\begin{proof}
Let $\eta$ be a smooth function. In fact we may choose
\begin{equation}\label{eta}\eta=(1+|x|)^{-m},\quad \text{where }m=N+2\gamma.\end{equation}
Then multiplying \eqref{subc} by the test function $\eta$, integrating over $\r^N$ and using integration by parts in the right hand side of \eqref{subc} we obtain the following inequality
         \begin{equation}\label{equation100}\begin{split}
         \left|\int_{\r^N}w^p\eta\,dx\right|&= \left|\int_{\r^N}\left(w(x)\int_{\r^N}\frac{\eta(x)-\eta(y)}{|x-y|^{N+2\gamma}}\,dy\right)\,dx\right|\\
         &\leq  \left|\int_{\r^N} \left((w(x)\eta^{1/p}(x))\eta(x)^{-1/p}\int_{\r^N}\frac{\eta(x)-\eta(y)}{|x-y|^{N+2\gamma}}\,dy\right)\,dx\right|\\
        & \leq  \left|\int_{\r^N}w^p(x)\eta(x)\,dx\right|^{1/p}\left(\int_{\r^N}\left|(\eta(x)^{-1/p}(-\Delta)^{\gamma}\eta(x))^{p/(p-1)}\right|\,dx\right)^{(p-1)/p}.
         \end{split}\end{equation}
We just need to compute the second term in the right hand side. Firstly we can check that it is bounded. Since
\begin{equation}\label{2term}
\eta(x)^{-\frac{1}{p-1}}|(-\Delta)^{\gamma}\eta(x)|^{\frac{p}{p-1}}\leq c(1+|x|)^{(N+2\gamma)\frac{1}{p-1}}(1+|x|)^{-\frac{p}{p-1}(N+2\gamma)}\leq(1+|x|)^{-(N+2\gamma)},
\end{equation}
 we have
\begin{equation*}
\int_{\r^N}\eta(x)^{-\frac{1}{p-1}}|(-\Delta)^{\gamma}\eta(x)|^{\frac{p}{p-1}}\,dx<\infty.
\end{equation*}
Note that for inequality \eqref{2term} we have used the definition of the test function given in \eqref{eta} and the following bound
\begin{equation}\label{bou}
|(-\Delta)^{\gamma}\eta|\leq c(1+|x|)^{-(N+2\gamma)},\quad \text{for }x\text{ large enough};
\end{equation}
which is proven at the end of the proof of this Lemma.
Now we chose $$\eta_R(x)=\eta(x/R).$$
Performing a similar analysis to that of \eqref{equation100}, we obtain
\begin{equation*}
\int_{\r^N}w^p(x)\eta_R(x)\leq \int_{\r^N}\eta_R(x)^{-1/(p-1)}\left|\int_{\r^N}\frac{\eta(x)-\eta(y)}{|x-y|^{N+2\gamma}}\,dy\right|^{p/(p-1)}\,dx.
\end{equation*}
Then, by scaling,
\begin{equation*}
\int_{|x|\leq R}w^p(x)\leq cR^{N-\frac{2p\gamma}{p-1}}\int_{\r^N}\eta(x)^{-1/(p-1)}\left|\int_{\r^N}\frac{\eta(x)-\eta(y)}{|x-y|^{N+2\gamma}}\,dy\right|^{p/(p-1)}\,dx.
\end{equation*}
Note that $N-\tfrac{2p\gamma}{p-1}<0$ by hypothesis. Then, letting $R$ tend to infinity, we obtain
$$\int_{|x|\leq R}w^p(x)\,dx\rightarrow 0\text{ as }R\rightarrow+\infty.$$
Therefore, we have $w\equiv 0$.\\

In order to conclude we just need to check inequality \eqref{bou} before. It follows from standard potential analysis. In fact, for $|x|\geq 1$ we have that
$$\left|(-\Delta)^{\gamma}\eta(x)\right|=\left|\text{P.V.}\int_{\r^N}\frac{\eta(x)-\eta(y)}{|x-y|^{N+2\gamma}}\,dy\right|\leq |I_1|+|I_2|+|I_3|+|I_4|,$$
where these integrals can be bounded as follows:  for the first integral we use that $|x-y|$ is small enough to check that
        \begin{equation*}\begin{split}
        |I_1|&=\left|\text{P.V.}\int_{|x-y|<1}\frac{\eta(x)-\eta(y)}{|x-y|^{N+2\gamma}}\,dy\right|
        =\left| \int_{|x-y|<1}\frac{\eta(x)-\eta(y)-\eta'(x)|x-y|}{|x-y|^{N+2\gamma}}\,dy\right|\\
        &\leq C \int_{|x-y|<1}\frac{|\eta''(x)||x-y|^2}{|x-y|^{N+2\gamma}}\,dy
\leq  \frac{C}{(1+|x|)^{N+2\gamma}}.
        \end{split}\end{equation*}
        For the second one, we have that $|x-y|<\frac{|x|}{2}$, then, we can use that $$|\eta(x)-\eta(y)|\leq|\eta'(\xi)||x-y|\leq C(1+|x|)^{-(N/2+2\gamma-1)}|x-y|,$$ and bound the integral as follows
        \begin{equation*}\begin{split}
         |I_2|&=\left|\int_{1<|x-y|<\frac{|x|}{2}}\frac{\eta(x)-\eta(y)}{|x-y|^{N+2\gamma}}\,dy\right|\\
        &\leq C|x|^{1-2\gamma}(1+|x|)^{-(N/2+2\gamma-1)}\leq \frac{C}{(1+|x|)^{N+2\gamma}},
        \end{split}\end{equation*}
        since $x$ is large enough and $|x|\sim|y|$, indeed $|y|\geq |x|-|x-y|\geq\frac{|x|}{2}$ and $|y|\leq \frac{3}{2}|x|$.

The third one is directly bounded, \begin{equation*}\begin{split}
            |I_3|=&\left|\int_{\frac{|x|}{2}<|x-y|<2|x|}\frac{\eta(x)-\eta(y)}{|x-y|^{N+2\gamma}}\,dy\right|
            \leq\frac{2^{N+2\gamma}}{|x|^{N+2\gamma}}\left|\int_{\frac{|x|}{2}<|x-y|<2|x|}(\eta(x)-\eta(y))\,dy\right|\\
           \leq &\frac{2^{N+2\gamma}}{|x|^{N+2\gamma}}\left|\eta(x)|x|^{-N}-\int_{\frac{|x|}{2}<|x-y|<2|x|}\eta(y)\,dy\right|
           \leq\frac{C}{|x|^{N+2\gamma}}\sim\frac{C}{(1+|x|)^{N+2\gamma}},
           \end{split}
           \end{equation*}
using that $|x|$ is large enough.

    For the fourth and last one, we use that $|y|\geq|x-y|-|x|\geq|x|$, then
     \begin{equation*}\begin{split}
     |I_4|=&\left|\int_{|x-y|>2|x|}\left(\frac{\eta(x)-\eta(y)}{|x-y|^{N+2\gamma}}\right)\,dy\right|\leq C\left(\int_{|x-y|>2|x|}\frac{1}{|x-y|^{N+2\gamma}}\,dy\right)(1+|x|)^{-(N+2\gamma)}\\
\leq& \frac{C}{(1+|x|)^{(N+2\gamma)}}.
     \end{split}\end{equation*}
\end{proof}

\section{Proof of Theorem \ref{th1}}
\subsection{Variational  Formulation}
We consider the following minimization problem
\be\label{cL}
c(L)= \inf_{ v \in H_L^\gamma, v \not \equiv 0}\mathscr{F}_L(v),
\ee
 where
\be\label{functional}
\mathscr{F}_L(v)=\frac{ \kappa_{n,\gamma}\int_0^L \int_0^L (v(t)-v(\tau))^2 K_L (t-\tau)\, dt \,d\tau +c_{n,\gamma} \int_0^L v(t)^2 \,dt}{ (\int_0^L v(t)^{\beta+1} dt)^{\frac{2}{\beta+1}}}.
\ee

Our first lemma shows that
\begin{lem}
\label{Lem1}
For any $L>0$, $ c(L)$ is achieved by a positive function $v_L\in \mathcal{C}^{\infty}$ which solves \begin{equation}\label{13_2}
\mathscr{L}^L_{\gamma}v=\kappa_{n,\gamma}P.V. \int_0^L (v(t)- v(\tau)) K_L (t-\tau) d \tau + c_{n,\gamma} v = c_{n,\gamma}v^{\beta}, \ \ \  \mbox{where}\ \ \beta=\tfrac{n+2\gamma}{n-2\gamma}.
\end{equation}
\end{lem}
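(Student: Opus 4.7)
The plan is to apply the direct method of the calculus of variations. Because $\mathscr{F}_L$ is scale-invariant, I would take a minimizing sequence $\{v_j\}\subset H_L^{\gamma}$ with $\int_0^L|v_j|^{\beta+1}\,dt=1$. Using the pointwise inequality $\bigl||v_j(t)|-|v_j(\tau)|\bigr|\le|v_j(t)-v_j(\tau)|$ together with $K_L>0$, replacing $v_j$ by $|v_j|$ does not increase the numerator, so I may assume $v_j\ge 0$. The numerator then equals $\mathscr{F}_L(v_j)\to c(L)$, and in particular $\{v_j\}$ is bounded in $H_L^{\gamma}$.

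Next I would invoke the compact embedding $H_L^{\gamma}\hookrightarrow L^{q}(0,L)$ from Remark \ref{compactness} with $q=\beta+1=\tfrac{2n}{n-2\gamma}$. When $\gamma\le 1/2$ the admissible range there is $q<\tfrac{2}{1-2\gamma}$, and the elementary inequality $\tfrac{2n}{n-2\gamma}<\tfrac{2}{1-2\gamma}$ holds iff $n>1$, which is guaranteed by $n\ge 2+2\gamma$; when $\gamma>1/2$ every $q\ge 1$ is admissible. Passing to a subsequence, $v_j\rightharpoonup v_L$ weakly in $H_L^{\gamma}$ and $v_j\to v_L$ strongly in $L^{\beta+1}(0,L)$, so $\|v_L\|_{L^{\beta+1}}=1$ and $v_L\ge 0$. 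Weak lower semicontinuity of the nonnegative quadratic form in the numerator (together with the $L^{2}$-semicontinuity) yields $\mathscr{F}_L(v_L)\le\liminf_j\mathscr{F}_L(v_j)=c(L)$, so $v_L$ attains the infimum.

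Computing the first variation of $\mathscr{F}_L$ at $v_L$ along an arbitrary $\varphi\in H_L^{\gamma}$ and exploiting the symmetry $K_L(t-\tau)=K_L(\tau-t)$ produces the Euler--Lagrange equation $\mathscr{L}_\gamma^L v_L=c(L)\,v_L^{\beta}$ in the weak sense. The fractional Sobolev inequality (Proposition \ref{comp1}), combined with the norm comparison in Remark \ref{compactness}, gives $\|v\|_{L^{\beta+1}}^{2}\le C\,(\text{numerator})$, which forces $c(L)>0$; since $\beta>1$, the rescaling $\tilde v_L=\lambda v_L$ with $\lambda^{\beta-1}=c(L)/c_{n,\gamma}$ converts the Euler--Lagrange equation into \eqref{13_2}. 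Strict positivity of $\tilde v_L$ then follows from the strong maximum principle of Proposition \ref{strongmaxppo}, and the $\mathcal{C}^{\infty}$-smoothness from the regularity bootstrap summarized in Remark \ref{regularity}.

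The step I expect to be the main obstacle is the verification that $\beta+1$ is strictly subcritical for the one-dimensional periodic embedding, so that no concentration can destroy the $L^{\beta+1}$-mass during the passage to the limit; this is precisely where the standing hypothesis $n\ge 2+2\gamma$ enters, and is what separates the present construction from the genuinely critical Yamabe equation on $\mathbb R^{n}$ where compactness fails.
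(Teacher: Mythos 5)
Your proof is correct and follows essentially the same route as the paper: normalize $\|v\|_{L^{\beta+1}}=1$, use the compact embedding of Remark \ref{compactness} with $q=\beta+1$ plus weak lower semicontinuity to produce a minimizer, pass to $|v|$ for nonnegativity, then invoke the Euler--Lagrange equation, the strong maximum principle and Remark \ref{regularity}. Your treatment is in fact slightly more careful than the paper's on two points the paper glosses over, namely the explicit rescaling that removes the Lagrange multiplier $c(L)$ and the check that $\beta+1<\tfrac{2}{1-2\gamma}$ when $\gamma\le\tfrac12$.
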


\begin{proof}
Considering that the value of multiplicative constants does not affect this proof, we may assume that $c_{n,\gamma}=1$ and $\kappa_{n,\gamma}=1$. 
Since $c(L)$ is invariant by rescaling we can assume that
\begin{equation}\label{a1}
\int_0^Lv^{{\beta}+1}\,dt= 1;
\end{equation}
 thus $\mathscr{F}_L[v]=\|v\|^2_{H^{\gamma}_L}$. 
First note that if $c(L)$ is achieved by a function $v_L$, then this function solves \eqref{13_2} because this is the Euler-Lagrange equation for the functional \eqref{functional}.

By construction, the functional $\mathscr{F}_L(v)$ is non-negative and therefore it is bounded from below, so the infimum is finite. Next we show that a minimizer exists.
 Let $\{v_i\}$ be a minimizing sequence normalized to satisfy \eqref{a1}, %
 such that $\mathscr{F}_L(v_i)\leq c(L)+1$. 
Because of Remark \ref{compactness}, for all $\gamma\in(0,1)$ we have the compact embedding of $H^{\gamma}_L$ in $L^{q}$, with $q\in(1,\tfrac{2}{1-2\gamma})$ if $\gamma\leq\tfrac{1}{2}$ and $q\geq 1$ if $\gamma>\tfrac{1}{2}$ so, in particular, for $q=\beta+1$.  Moreover, there exists $v_L\in H_L^{\gamma}$ such that $v_i\rightharpoonup v_L$. This implies
\begin{equation}\label{desnormas}
\|v_L\|_{H^{\gamma}_L}\leq\liminf_{j}\|v_j\|_{H_L^{\gamma}}.
\end{equation}
Since $\{v_i\}$ is a minimizing sequence, $\liminf \|v_j\|_{H_L^{\gamma}}=c(L)$, and \eqref{desnormas} implies that
we have a minimizer $v_L\in H_L^{\gamma}$.
 The compact embedding assures that convergence is strong in $L^{\beta+1}$, i.e., $$1=\lim_j\|v_j\|_{L^{{\beta}+1}}=\|v_L\|_{L^{{\beta}+1}}.$$
Now we apply Remark \ref{regularity} to obtain $v_L\in \mathcal{C}^{\infty}$.

Finally we observe that the minimizer $v_L\in H^{\gamma}_L$ must be positive. If $v_L$ is not non-negative we take $w=|v_L|\in H_L^{\gamma}$ and the following inequality holds
\be\label{posit}
\mathscr{F}_L(w)\leq \mathscr{F}_L(v_L),
\ee
obtaining a contradiction.
Indeed if sign$(v(t))=$ sign$(v(\tau))$, equality holds in \eqref{posit} and if sign$(v(t))\neq $ sign$(v(\tau))$, \eqref{posit} is also true because
\begin{equation*}
\begin{split}
(w(t)-w(\tau))^2&=(v_L(t)+v_L(\tau))^2\leq\max\{(v_L(t))^2,(v_L(\tau))^2\}\\
&\leq(|v_L(t)|+|v_L(\tau)|)^2 =(v_L(t)-v_L(\tau))^2.
\end{split}
\end{equation*}
Once we have the non-negativity of the minimizer, since $\|v_L\|_{L^{\beta}}=1$, the maximum principle given in Proposition \ref{strongmaxppo} applied to equation \eqref{13_2} assures that $v_L> 0$. 
Therefore we conclude the proof of the Lemma \ref{Lem1}.
%
\end{proof}
    \medskip

    We now introduce the weak formulation of the problem. We will say that $v\in H^{\gamma}_L$ is weak solution of \eqref{13_2} if it satisfies
    \begin{equation}\label{formdebil}
    \langle\mathscr{L}_{\gamma}^Lv,\phi\rangle= c_{n,\gamma}\int_0^Lv^{\beta}(t)\phi(t)\,dt,\quad  \forall \phi\in H^{\gamma}_L
    \end{equation}
where $\langle \, , \, \rangle$ is defined by
    \begin{equation*}
    \langle\mathscr{L}_{\gamma}^Lv,\phi\rangle=\kappa_{n,\gamma}P.V.   \int_{0}^L\int_{0}^L (v(t)-v(\tau))(\phi(t)-\phi
    (\tau))K_L(t-\tau)\,dt\,d\tau + c_{n,\gamma} \int_0^L v(t)\phi(t)\,dt.
    \end{equation*}

\subsection{Proof of Theorem \ref{th1}:}
At this moment it is unclear if the minimizer $v_L$ for \eqref{functional} is the constant solution. %
Let \begin{equation*} c^{*} (L)= c_{n,\gamma}L^{ \frac{\beta-1}{\beta+1}}
\end{equation*} be the energy of the constant solution.
The next key lemma provides a criteria:

\begin{lem}
\label{L21}
Assume that $ c(L_1)$ is attained by a nonconstant function $v_{L_1}$.  Then $ c(L) < c^{*} (L)$ for all $L> L_1$.
\end{lem}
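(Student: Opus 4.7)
The idea is to use the nonconstant minimizer $v_{L_1}$ at scale $L_1$ to manufacture an explicit competitor at scale $L>L_1$, and to exploit the strict kernel inequality from Lemma \ref{appendix} to beat the constant energy level $c^*(L)$. First, I would note that a direct calculation gives $\mathscr{F}_L(k) = c_{n,\gamma} L^{(\beta-1)/(\beta+1)}$ for any constant $k>0$, which explains the formula for $c^*(L)$ and shows $c(L)\le c^*(L)$ in general. In particular $c(L_1)\le c^*(L_1)=c_{n,\gamma}L_1^{(\beta-1)/(\beta+1)}$.

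The natural competitor at scale $L$ is the $L$-rescaling
\[
 \tilde v(t) := v_{L_1}\!\Bigl(\tfrac{L_1}{L}\,t\Bigr),\qquad t\in\mathbb R,
\]
which is $L$-periodic and belongs to $H_L^\gamma$. I would carry out the change of variable $s=\tfrac{L_1}{L}t$, $\sigma=\tfrac{L_1}{L}\tau$ in each piece of $\mathscr{F}_L(\tilde v)$. The pointwise integrals give
\[
 \int_0^L \tilde v(t)^2\,dt = \tfrac{L}{L_1}\int_0^{L_1} v_{L_1}(s)^2\,ds,\qquad
 \int_0^L \tilde v(t)^{\beta+1}\,dt = \tfrac{L}{L_1}\int_0^{L_1} v_{L_1}(s)^{\beta+1}\,ds,
\]
while the double integral becomes
\[
 \int_0^L\!\!\int_0^L (\tilde v(t)-\tilde v(\tau))^2 K_L(t-\tau)\,dt\,d\tau
 = \tfrac{L^2}{L_1^2}\!\int_0^{L_1}\!\!\int_0^{L_1}(v_{L_1}(s)-v_{L_1}(\sigma))^2 K_L\!\Bigl(\tfrac{L}{L_1}(s-\sigma)\Bigr) ds\,d\sigma.
\]
Now the key input: Lemma \ref{appendix} gives $\tfrac{L}{L_1}K_L\!\bigl(\tfrac{L}{L_1}(s-\sigma)\bigr)<K_{L_1}(s-\sigma)$ pointwise. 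Multiplying by $(v_{L_1}(s)-v_{L_1}(\sigma))^2\ge 0$ and integrating yields the \emph{strict} inequality
\[
 \int_0^L\!\!\int_0^L (\tilde v(t)-\tilde v(\tau))^2 K_L(t-\tau)\,dt\,d\tau
 < \tfrac{L}{L_1}\!\int_0^{L_1}\!\!\int_0^{L_1}(v_{L_1}-v_{L_1})^2 K_{L_1}(s-\sigma)\,ds\,d\sigma,
\]
where the strictness uses precisely that $v_{L_1}$ is nonconstant and $K_{L_1}>0$ (so $(v_{L_1}(s)-v_{L_1}(\sigma))^2>0$ on a set of positive measure).

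Combining the three pieces, the numerator of $\mathscr{F}_L(\tilde v)$ is strictly less than $(L/L_1)$ times the numerator of $\mathscr{F}_{L_1}(v_{L_1})$, while the denominator equals $(L/L_1)^{2/(\beta+1)}$ times the denominator of $\mathscr{F}_{L_1}(v_{L_1})$. Hence
\[
 c(L)\le\mathscr{F}_L(\tilde v) < \Bigl(\tfrac{L}{L_1}\Bigr)^{\!\frac{\beta-1}{\beta+1}}\! \mathscr{F}_{L_1}(v_{L_1}) = \Bigl(\tfrac{L}{L_1}\Bigr)^{\!\frac{\beta-1}{\beta+1}} c(L_1) \le \Bigl(\tfrac{L}{L_1}\Bigr)^{\!\frac{\beta-1}{\beta+1}}\! c_{n,\gamma} L_1^{\frac{\beta-1}{\beta+1}} = c^*(L),
\]
which is the desired conclusion.

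\textbf{Expected difficulties.} The argument is essentially a bookkeeping of two Jacobians (which conveniently leaves behind the exponent $(\beta-1)/(\beta+1)$ matching $c^*(L)$) together with Lemma \ref{appendix}. There is no real obstacle; the only mildly delicate point is ensuring the strict inequality survives after integration, which is handled by the nonconstancy of $v_{L_1}$ together with the strict pointwise positivity of $K_{L_1}$. Note also that the argument does not require $c(L_1)<c^*(L_1)$ as hypothesis — the strictness in the conclusion is produced by the kernel comparison alone.
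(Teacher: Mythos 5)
Your proposal is correct and follows essentially the same route as the paper: rescale the nonconstant minimizer $v_{L_1}$ to an $L$-periodic competitor, track the two Jacobians to produce the factor $(L/L_1)^{(\beta-1)/(\beta+1)}$, and invoke Lemma \ref{appendix} for the strict kernel inequality, which survives integration precisely because $v_{L_1}$ is nonconstant. Your explicit remark on where the strictness comes from is a useful clarification that the paper leaves implicit, but the argument is the same.
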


\begin{proof}
Let $v_{L_1}$ be the minimizer for $L_1$, then $v_{L_1}$ is the solution to
\begin{equation*}
\mathscr{L}_{\gamma}^{L_1}(v_{L_1}):=\kappa_{n,\gamma}\int_0^{L_1} (v_{L_1} (t)- v_{L_1} ({\tau})) K_{L_1} ( t-{\tau}) d {\tau} +c_{n,\gamma} v_{L_1} = c_{n,\gamma}v_{L_1}^{\beta}.
\end{equation*}
By assumption $ v_{L_1} \not \equiv 1$.
Now let $$t=\frac{L_1}{L}\bar{t}\quad\text{ and }\quad
 v(\bar{t})= v_{L_1} \left(\frac{L_1}{L} \bar{t}\right), $$
which is an $L-$periodic function.
By definition it is clear that%
\begin{equation*}\begin{split}c(L) &\leq  \frac{ \kappa_{n,\gamma}\int_0^L \int_0^L (v(\bar{t})-v(\bar{\tau}))^2 K_L (\bar{t}-\bar{\tau}) \,d\bar{t} \,d\bar{\tau} + c_{n,\gamma}\int_0^L v^2(\bar{t}) \,d\bar{t}}{ (\int_0^L v^{{\beta}+1}(\bar{t}) \,d\bar{t})^{\frac{2}{{\beta}+1}}}\\
&= \left(\tfrac{L}{L_1}\right)^{1-\frac{2}{{\beta}+1}} \frac{ \kappa_{n,\gamma}\int_0^{L_1} \int_0^{L_1} ( v_{L_1}(t)- v_{L_1} ({\tau}) )^2 \frac{L}{L_1}  K_L ( \frac{L}{L_1} (t-{\tau})) \,d t \,d {\tau} + c_{n,\gamma}\int_0^{L_1} v_{L_1}^2(t) \,dt}{ (\int_{0}^{L_1} v_{L_1}^{{\beta}+1}(t)\,dt )^{\frac{2}{{\beta}+1}}}
\\&< \left(\tfrac{L}{L_1}\right)^{1-\frac{2}{{\beta}+1}} \frac{ \kappa_{n,\gamma}\int_0^{L_1} \int_0^{L_1} ( v_{L_1}(t)- v_{L_1} ({\tau}) )^2 (  K_{L_1}   (t-{\tau})) \,d t \,d {\tau} + c_{n,\gamma}\int_0^{L_1} v_{L_1}^2(t) \,dt}{ (\int_{0}^{L_1} v_{L_1}^{{\beta}+1}(t)\,dt )^{\frac{2}{{\beta}+1}}}\\
&\leq \left(\tfrac{L}{L_1}\right)^{1-\frac{2}{{\beta}+1}}  c(L_1) \leq \left(\tfrac{L}{L_1}\right)^{1-\frac{2}{{\beta}+1}}  c^{*} (L_1) =c^{*} (L).
\end{split}
\end{equation*}
The second inequality above follows from 
 Lemma \ref{appendix}.

Thus we conclude that $c(L) <c^{*}(L)$ for all $L>L_1$ and hence $c(L)$ is attained by a nonconstant minimizer.
\end{proof}

\begin{lem}\label{lemma1} If the period $L$ is small enough, then $c(L)$ is attained by the constant only.
\end{lem}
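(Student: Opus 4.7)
The plan is to show that whenever $L$ is sufficiently small, $\mathscr{F}_L(v)>c^*(L):=c_{n,\gamma}L^{(\beta-1)/(\beta+1)}$ for every nonconstant competitor $v\in H^\gamma_L$, so the unique minimizer is constant. By scale invariance I normalize $\int_0^L v^{\beta+1}\,dt=1$, which makes the constant critical point equal to $L^{-1/(\beta+1)}$ with energy exactly $c^*(L)$. H\"older's inequality gives $\|v\|^2_{L^2(0,L)}\leq L^{(\beta-1)/(\beta+1)}$ with equality iff $v$ is constant, so the question reduces to establishing
\begin{equation*}
\kappa_{n,\gamma}\,I_L(v)\;>\;c_{n,\gamma}\bigl(L^{(\beta-1)/(\beta+1)}-\|v\|^2_{L^2}\bigr),\quad I_L(v):=\int_0^L\!\!\int_0^L(v(t)-v(\tau))^2 K_L(t-\tau)\,dt\,d\tau.
\end{equation*}
From $K_L\geq K$ and the asymptotic $K(\xi)\gtrsim|\xi|^{-1-2\gamma}$ near zero (Lemma \ref{Kexpansion}), for $L\leq 1$ one has $I_L(v)\geq c_0[v]^2_{H^\gamma(0,L)}$ in terms of the Gagliardo seminorm.

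I then rescale to the unit interval by $u(s):=L^{1/(\beta+1)}v(Ls)$, giving $\|u\|_{L^{\beta+1}(0,1)}=1$, $\|v\|^2_{L^2(0,L)}=L^{(\beta-1)/(\beta+1)}\|u\|^2_{L^2(0,1)}$ and $[v]^2_{H^\gamma(0,L)}=L^{(\beta-1)/(\beta+1)-2\gamma}[u]^2_{H^\gamma(0,1)}$. Substituting, the inequality reduces to
\begin{equation*}
\kappa_{n,\gamma}\,c_0\,L^{-2\gamma}\,[u]^2_{H^\gamma(0,1)}\;>\;c_{n,\gamma}\bigl(1-\|u\|^2_{L^2(0,1)}\bigr),
\end{equation*}
and hence to the $L$-independent statement that there exists $C>0$ with
\begin{equation*}
[u]^2_{H^\gamma(0,1)}\;\geq\;C\bigl(1-\|u\|^2_{L^2(0,1)}\bigr)\quad\text{for every nonconstant }u\geq 0\text{ with }\|u\|_{L^{\beta+1}(0,1)}=1.
\end{equation*}
Given such a $C$, any $L$ with $\kappa_{n,\gamma}\,c_0\,C\,L^{-2\gamma}>c_{n,\gamma}$ concludes the proof.

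The last claim I would prove by contradiction plus compactness. A sequence $\{u_n\}$ violating it would have $[u_n]^2_{H^\gamma}\to 0$ (since $1-\|u_n\|^2_{L^2}\in[0,1]$ by H\"older), and the Rellich-type compactness of Propositions \ref{comp1}--\ref{comp2} forces, up to subsequence, $u_n\to u_\infty$ in $L^{\beta+1}(0,1)$ with $u_\infty$ constant and $\|u_\infty\|_{L^{\beta+1}}=1$, i.e.\ $u_\infty\equiv 1$. Writing $u_n=\bar u_n+w_n$ with zero-mean $w_n$, one has $\bar u_n\to 1$ and $\|w_n\|_{L^2}\to 0$. A Taylor expansion of $(\bar u_n+w_n)^{\beta+1}$ about $\bar u_n$, together with the computation $\bar u_n^2=(\bar u_n^{\beta+1})^{2/(\beta+1)}=1-\beta\|w_n\|^2_{L^2}+o(\|w_n\|^2)$, yields
\begin{equation*}
1-\|u_n\|^2_{L^2(0,1)}=(\beta-1)\|w_n\|^2_{L^2(0,1)}\bigl(1+o(1)\bigr).
\end{equation*}
The fractional Poincar\'e inequality \eqref{poincare} then gives $\|w_n\|^2_{L^2}\leq C_P[w_n]^2_{H^\gamma}=C_P[u_n]^2_{H^\gamma}$, so the ratio is bounded below by $[(\beta-1)C_P]^{-1}+o(1)$, contradicting the assumption.

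The main obstacle is making the Taylor step rigorous, since $\beta+1=\tfrac{2n}{n-2\gamma}$ is generally noninteger and $\{w_n\}$ is a priori small only in $L^2$, not in $L^\infty$. To bypass this I would upgrade $\|w_n\|_{L^{\beta+1}}\to 0$ using the Sobolev embedding $H^\gamma(0,1)\hookrightarrow L^{\beta+1}(0,1)$ of Proposition \ref{comp1} and invoke a quantitative convexity lower bound of the form $\int u_n^{\beta+1}-\bar u_n^{\beta+1}\geq\tfrac{\beta(\beta+1)}{2}\bar u_n^{\beta-1}\|w_n\|^2_{L^2}-C\|w_n\|^{\beta+1}_{L^{\beta+1}}$ valid in a neighbourhood of the constant $1$, which produces the same leading-order relation and closes the contradiction.
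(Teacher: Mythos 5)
Your argument is correct in outline but follows a genuinely different route from the paper. The paper first proves a uniform $L^\infty$ bound on the minimizer $v_L$ for $L\leq 1$ by a Gidas--Spruck blow-up argument, invoking the Liouville-type nonexistence result of Lemma \ref{scri} for the limiting subcritical problem on $\R$; it then differentiates the Euler--Lagrange equation, observes that $\phi=\partial_t v_L$ solves the linearized equation \eqref{28}, and kills $\phi$ by combining the resulting energy estimate with the rescaled Poincar\'e inequality \eqref{poincare}. You instead compare energies directly: after normalizing and rescaling to the unit interval you reduce the lemma to a single $L$-independent inequality $[u]^2_{H^\gamma(0,1)}\geq C(1-\|u\|^2_{L^2(0,1)})$ on the constraint sphere, proved by compactness and expansion around the constant. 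Your route is more self-contained --- it avoids Lemma \ref{scri}, the blow-up analysis, and the regularity needed to differentiate the equation --- and it yields the strict inequality $\mathscr{F}_L(v)>c^*(L)$ for every nonconstant competitor, not merely for minimizers. The paper's route, in exchange, reuses machinery (the subcritical Liouville lemma, the linearized operator) that is needed elsewhere in the article.

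Two points in your final step need attention. First, the convexity bound you invoke to ``close'' the argument is stated in the wrong direction: to contradict $[u_n]^2/(1-\|u_n\|_{L^2}^2)\to 0$ you must bound $1-\|u_n\|^2_{L^2}$ from \emph{above} by $C[u_n]^2_{H^\gamma}$, which requires an \emph{upper} bound on $\int u_n^{\beta+1}-\bar u_n^{\beta+1}$ (equivalently a lower bound on $\bar u_n$), whereas the inequality you write down, $\int u_n^{\beta+1}-\bar u_n^{\beta+1}\geq\tfrac{\beta(\beta+1)}{2}\bar u_n^{\beta-1}\|w_n\|^2_{L^2}-C\|w_n\|^{\beta+1}_{L^{\beta+1}}$, is the lower bound and only yields $1-\|u_n\|^2_{L^2}\geq(\beta-1)\|w_n\|^2_{L^2}(1+o(1))$. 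The direction you actually need is just as elementary: Taylor with integral remainder gives $(\bar u+w)^{\beta+1}-\bar u^{\beta+1}-(\beta+1)\bar u^{\beta}w\leq C\bigl(w^2+|w|^{\beta+1}\bigr)$ since $\beta>1$, while $\|w_n\|^{\beta+1}_{L^{\beta+1}}=o\bigl([u_n]^2_{H^\gamma}\bigr)$ by the Sobolev embedding of Proposition \ref{comp1} combined with Poincar\'e; together with $\bar u_n\leq 1$ (Jensen) this gives $1-\|u_n\|^2_{L^2}\leq 1-\bar u_n^{\beta+1}\leq C[u_n]^2_{H^\gamma}$ and closes the contradiction. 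Second, your key claim is restricted to $u\geq 0$; to conclude that \emph{every} minimizer is constant you should either drop the sign restriction (the limit constant is then $\pm 1$ and the argument is unchanged) or combine with the comparison $\mathscr{F}_L(|v|)\leq\mathscr{F}_L(v)$ from the proof of Lemma \ref{Lem1}. Both repairs are routine.
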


\begin{proof}
First, we claim that, for $L\leq 1$, the minimizer $v_L$ is uniformly bounded. This follows from  a standard Gidas-Spruck type blow-up argument. In fact, suppose not, we may assume that there exist sequences $\{L_i\}$, $\{v_{L_i}\}$ and $\{t_i\}$ with $t_i\in[0,L_i]$ such that
 $$ \max_{ 0\leq t \leq L_i} v_{L_i} (t)= \max_{t\in \R} v_{L_i} (t)=v_{L_i}(t_i) = M_i \to +\infty.$$
  Note that $v_{L_i}$ satisfies \eqref{13_2}. Now rescale
  \begin{equation*}
  \tilde{t}=\epsilon_i^{-1}(t-t_i),\ \ \tilde{v}_{L_i} (\tilde{t})=\epsilon_i^{\frac{2\gamma}{{\beta}-1}} v_{L_i} ( \epsilon_i \tilde{t}),
\end{equation*}
where $$\ M_i= \epsilon_i^{\frac{-2\gamma}{{\beta}-1}}.$$
With this change of variable, \eqref{13_2} reads
\begin{equation*}
\kappa_{n,\gamma}\int_{\R}\epsilon_i(\tilde{v}_{L_i}(\tilde{t})-\tilde{v}_{L_i}(\tilde{\tau}))K(\epsilon_i(\tilde{t}-\tilde{\tau}))\, d\tilde{\tau} + c_{n,\gamma}\tilde{v}_{L_i}( \tilde{t})=\epsilon_i ^{-2\gamma}c_{n,\gamma}v_{L_i}^{\beta}(\tilde{t}).
\end{equation*}
Because of \eqref{Kexp}
$$ \int_{\R}\epsilon_i(\tilde{v}_{L_i}(\tilde{t})-\tilde{v}_{L_i}(\tilde{\tau}))K(\epsilon_i(\tilde{t}-\tilde{\tau}))\, d\tilde{\tau}\sim \frac{1}{\epsilon_i^{2\gamma}} \int_{\R} \frac{\tilde{v}_{L_i} (\tilde{t})- \tilde{v}_{L_i} (\tilde{{\tau}})}{|\tilde{t}-\tilde{{\tau}}|^{1+2\gamma}} d \tilde{{\tau}} \sim \frac{1}{\epsilon_i^{2\gamma}\kappa_{n,\gamma}} (-\Delta)^{\gamma} \tilde{v}_{L_i}.
$$
Therefore $\tilde{v}_{L_i}$ satisfies
\begin{equation*}
(-\Delta)^{\gamma} \tilde{v}_{L_i}+c_{n,\gamma}\epsilon^{2\gamma}\tilde{v}_{L_i}(\tilde{t})=c_{n,\gamma}\tilde{v}_{L_i}^{\beta}(\tilde{t})+o(1)\text{ as }i\rightarrow\infty.
\end{equation*}
Remark \ref{regularity} assures that all the derivatives of $v_{L_i}$ are equi-continuous functions, thus we can apply Ascoli-Arzel{\'a} theorem to find $ v_{\infty}\in \mathcal{C}^{\infty}$ such that $ \tilde{v}_{L_i} \to v_\infty$ as $ i \to +\infty$ and which satisfies
\begin{equation*}
(-\Delta)^\gamma v_\infty = c_{n,\gamma} v_\infty^{\beta} \ \mbox{in} \ \R.
\end{equation*}
Note that $v_\infty$ is positive. By the result given in Lemma \ref{scri}
 we derive that $ v_\infty\equiv 0$, which contradicts with the assumption that $ v_\infty (0)=1$.

Secondly, we  use Poincare's inequality given in \eqref{poincare} to show that $ v_L \equiv Constant$. In fact we observe that $ \phi =\frac{\partial v_L}{\partial t}$ satisfies
\be
\label{28}
 \mathscr{L}^L_{\gamma}\phi- c_{n,\gamma}{\beta} v_L^{{\beta}-1} \phi=0,
\ee
where $\mathscr{L}^L_{\gamma}$ is defined as in \eqref{LgammaL}.
The weak formulation for the problem from \eqref{formdebil},
the fact that $ v_L$ is bounded  and equation \eqref{28} give
$$ \int_0^L \int_0^L (\phi (t)-  \phi ({\tau}))^2 K_L (t-{\tau}) dt d{\tau} \leq C \int_0^L  \phi^2.
$$
Rescaling $ t=L \tilde{t}, \tilde{\phi} = \phi (L \tilde{t})$ and using \eqref{Kexp}, since $L$ is small enough, we obtain that
$$ \int_0^1 \int_0^1 \frac{( \tilde{\phi} (\tilde{t})-\tilde{\phi} (\tilde{{\tau}}))^2 }{|\tilde{t}-\tilde{{\tau}}|^{1+2\gamma}} d \tilde{t} d \tilde{{\tau}} \leq C L^{2\gamma} \int_0^1 \tilde{\phi}^2.
$$
By Poincare's inequality \eqref{poincare} %
(since $\phi$ has average zero) %
 there exists $C_0>0$ for which
$$
C_0 \int_0^1 \tilde{\phi}^2  \leq \int_0^1 \int_0^1 \frac{( \tilde{\phi} (\tilde{t})-\tilde{\phi} (\tilde{{\tau}}))^2 }{|\tilde{t}-\tilde{{\tau}}|^{1+2\gamma}} d \tilde{t} d \tilde{{\tau}} \leq C L^{2\gamma} \int_0^1 \tilde{\phi}^2,
$$
which yields that
$$
\int_0^1 \tilde{\phi}^2 =0
$$
for $L$ small.
\end{proof}

\begin{lem}\label{lemma2}
If the period $L$ is large enough, then
\be \label{30}
 c(L) < c^{*} (L),
 \ee
 and therefore, we have a non constant positive solution for \eqref{equation3}.
\end{lem}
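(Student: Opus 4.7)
My plan is to produce an explicit test function whose functional $\mathscr F_L$ stays bounded as $L\to\infty$, while the constant solution's energy $c^*(L)=c_{n,\gamma}L^{(\beta-1)/(\beta+1)}$ grows like $L^{2\gamma/n}$. Since $\beta-1=\tfrac{4\gamma}{n-2\gamma}$ and $\beta+1=\tfrac{2n}{n-2\gamma}$, the exponent $(\beta-1)/(\beta+1)=2\gamma/n>0$, so $c^*(L)\to\infty$. Beating this is easy because the minimum is bounded above by the value on any admissible $v$.

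Concretely, pick once and for all a bump $\phi\in\mathcal C_c^\infty((\tfrac14,\tfrac34))$ with $\phi\not\equiv0$, and for each $L\geq 2$ extend $\phi$ by $L$-periodicity (so $\phi$ vanishes on $[1,L]$ inside the fundamental period). The denominator $\bigl(\int_0^L\phi^{\beta+1}\,dt\bigr)^{2/(\beta+1)}=\bigl(\int_0^1\phi^{\beta+1}\bigr)^{2/(\beta+1)}$ is a positive constant independent of $L$, and likewise $c_{n,\gamma}\int_0^L\phi^2=c_{n,\gamma}\int_0^1\phi^2$ is $L$-independent. All the work is in controlling the double integral, which by the standard unfolding for periodic kernels equals
\[
\int_0^L\!\int_0^L(\phi(t)-\phi(\tau))^2K_L(t-\tau)\,dt\,d\tau=\int_0^L\!\int_{\mathbb R}(\phi(t)-\phi(\tau))^2K(t-\tau)\,d\tau\,dt.
\]

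I would split this last expression into three ranges according to the location of $\tau$ relative to the support of $\phi$ modulo $L$. On the diagonal piece $t,\tau\in[0,1]$ the expression reduces to the fractional Gagliardo-type seminorm $\iint_{(0,1)^2}(\phi(t)-\phi(\tau))^2 K(t-\tau)\,dt\,d\tau$, which is finite because of the local singularity $K(\xi)\sim|\xi|^{-1-2\gamma}$ from Lemma \ref{Kexpansion} and the smoothness of $\phi$. For $t\in[0,1]$ and $\tau$ away from the support of $\phi$ (or vice-versa), one of the two factors in $(\phi(t)-\phi(\tau))^2$ is zero, so one gets integrals of the form $\int_0^1\phi(t)^2\int_{|t-\tau|\geq 1/4}K(t-\tau)\,d\tau\,dt$, and the inner integral is finite independently of $L$ since $K$ is integrable away from the origin and decays exponentially at infinity. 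For the periodic copies at $\tau\in[jL,jL+1]$, $j\neq0$, the separation $|t-\tau|\geq |j|L-1$ combined with the exponential decay $K(\xi)\lesssim e^{-\frac{n+2\gamma}{2}|\xi|}$ makes the contribution summable over $j$ and in fact vanishing as $L\to\infty$. Summing the three contributions yields a bound $\mathscr F_L(\phi)\leq M$ with $M$ independent of $L\geq 2$.

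Therefore $c(L)\leq \mathscr F_L(\phi)\leq M$ for all sufficiently large $L$, whereas $c^*(L)=c_{n,\gamma}L^{2\gamma/n}\to\infty$, so \eqref{30} holds for $L$ large. Combined with Lemma \ref{Lem1}, which produces a positive smooth minimizer $v_L$ of $\mathscr F_L$, the strict inequality $c(L)<c^*(L)$ rules out $v_L\equiv\text{const}$, giving a nonconstant positive solution of \eqref{equation3}.

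The main (minor) obstacle is the careful bookkeeping for the double integral on the "off-diagonal" strips where the periodic copies of $\phi$ interact with the original support through $K$; but this is completely controlled by the two-regime asymptotic $K(\xi)\sim|\xi|^{-1-2\gamma}$ near $0$ and $K(\xi)\sim e^{-\frac{n+2\gamma}{2}|\xi|}$ at infinity provided by Lemma \ref{Kexpansion}, so no delicate cancellation is required, only a uniform-in-$L$ upper bound.
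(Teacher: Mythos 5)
Your proof is correct, and it follows the same overall strategy as the paper: exhibit an explicit test function whose value of $\mathscr F_L$ remains bounded as $L\to\infty$, and compare with $c^*(L)=c_{n,\gamma}L^{2\gamma/n}\to\infty$. The only difference is the choice of test function. The paper takes the ``bubble'' $b(t)=\bigl(\tfrac{e^t}{e^{2t}+1}\bigr)^{(n-2\gamma)/2}$ from \eqref{b}--\eqref{bubble1}, cuts it off at scale $L/4$, and extends periodically; the energy of this test function actually converges to the (finite) ground-state energy of the bubble, which is consistent with the remark following the lemma that the minimizers $v_L$ converge to $b$ as $L\to\infty$. You instead take a fixed compactly supported bump extended $L$-periodically. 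For the purpose of this lemma alone your choice is simpler: the denominator and the zeroth-order term are manifestly $L$-independent, and the unfolded double integral $\int_0^L\int_{\mathbb R}(\phi(t)-\phi(\tau))^2K(t-\tau)\,d\tau\,dt$ is controlled exactly as you describe, using only the two-regime asymptotics of $K$ from Lemma \ref{Kexpansion} (the near-diagonal piece is finite since $\phi$ is Lipschitz and $(\phi(t)-\phi(\tau))^2K(t-\tau)\lesssim|t-\tau|^{1-2\gamma}$, the off-diagonal piece reduces to $\int\phi^2$ times the integral of $K$ away from the origin, and the $j\neq0$ copies are exponentially small). What the paper's choice buys is the sharper information $\limsup_{L\to\infty}c(L)\leq\mathscr F_{\mathbb R}(b)$, which is used later in the program; what yours buys is economy, since you never need to know that $b$ solves the equation or compute its decay. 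The final step --- invoking Lemma \ref{Lem1} to get a positive smooth minimizer and concluding it is nonconstant from the strict inequality --- matches the paper.
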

\begin{proof}
Let
\be\label{b}
b(t):=\left(\frac{e^t}{e^{2t}+1}\right)^{\tfrac{n-2\gamma}{2}},
\ee
which is a ground state solution for \eqref{equation3}. %
This follows because the ``bubble'' \be\label{bubble1}
\omega(x)=\left(\frac{1}{|x|^2+1}\right)^{\tfrac{n-2\gamma}{2}}, \ee  %
 is a solution of \eqref{equation0} that is regular at the origin. Note that  $b(t)>0$ and $b(\pm \infty)=0.$

 Now we take a cut-off function $\eta_L$ which is identically $1$ in the ball of radius $L/4$ and null outside the ball of radius $L/2$. We define a new function
 \begin{equation*}
 v_L(t)=b(t)\eta_L(t).
 \end{equation*}
We will denote $\tilde{v}_L(t)\in H^{\gamma}_L$ the $L-$periodic extension of $v_L$.
The definitions of $c(L)$ and $K_L$, given in \eqref{cL} and \eqref{KL} respectively,
 give us the following equality:
\be\begin{split}\label{cL2}
c(L)
=&\inf_{ v \in H_L^\gamma, v \not \equiv 0}\frac{\kappa_{n,\gamma}\int_0^L \int_{\r} (v(s+\tau)-v(\tau))^2 K (s) \,ds d\tau + c_{n,\gamma}\int_0^L v(t)^2 dt}{ (\int_0^L v(t)^{{\beta}+1}\, dt)^{\frac{2}{{\beta}+1}}}\\
=&\inf_{ v \in H_L^\gamma, v \not \equiv 0}\frac{\kappa_{n,\gamma}\int_{-L/2}^{L/2} \int_{\r} (v(s+\tau)-v(\tau))^2 K (s) \,ds d\tau + c_{n,\gamma}\int_{-L/2}^{L/2} v(t)^2 \,dt}{ (\int_{-L/2}^{L/2} v(t)^{{\beta}+1} \,dt)^{\frac{2}{{\beta}+1}}},
\end{split}
\ee
where $s:=t-\tau$ and we have used the $L-$periodicity of any $v\in H^{\gamma}_L$. 
We use $\tilde{v}_L$ as a test function in the functional \eqref{cL2}. Taking the limit $L\rightarrow\infty$,
\begin{equation*}
\begin{split}
\lim_{L\to\infty}c(L)&\leq\lim_{L\to\infty} \frac{\kappa_{n,\gamma}\int_{-L/2}^{L/2} \int_{\r} (\tilde{v}_L(s+\tau)-\tilde{v}_L(\tau))^2 K (s) \,ds d\tau + c_{n,\gamma}\int_{-L/2}^{L/2} \tilde{v}_L(t)^2 \,dt}{ (\int_{-L/2}^{L/2} \tilde{v}_L(t)^{{\beta}+1} \,dt)^{\frac{2}{{\beta}+1}}}\\
&=\frac{\kappa_{n,\gamma}\int_{\r} \int_{\r} (b(t)-b(\tau))^2 K (t-\tau)\, dt d\tau + c_{n,\gamma}\int_{\r} b(t)^2 \,dt}{ (\int_{\r} b(t)^{{\beta}+1}\, dt)^{\frac{2}{{\beta}+1}}}<\infty,
\end{split}
\end{equation*}
since the ``bubble'' \eqref{bubble1} has finite energy.
 Let us check that all the integrals above are uniformly bounded in order to use the Dominated Convergence Theorem. First, both integrals $\int_{-L/2}^{L/2}\tilde{v}^2_L(t)\,dt$ and $\int_{-L/2}^{L/2}\tilde{v}^{\beta+1}_L(t)\,dt$ are uniformly bounded  since $b(t)\sim e^{-\frac{n-2\gamma}{2}|t|}$. 
  Finally, recalling that $b(t),\eta_L\in L^{\infty}$ and the behaviour of the kernel \eqref{Kinfty}
  $$\int_{-L/2}^{L/2} \int_{\r} (\tilde{v}_L(s+\tau)-\tilde{v}_L(\tau))^2 K (s) \,ds d\tau=I_1+I_2,$$
  where 
          \begin{equation*}
          \begin{split}I_1\sim& \int_{-L/2}^{L/2} \int_{\r \setminus [-\epsilon,\epsilon]} (\tilde{v}_L(s+\tau)-\tilde{v}_L(\tau))^2 e^{-|s|\tfrac{n+2\gamma}{2}} \,ds d\tau\\
           \sim &\int_{\r \setminus [-\epsilon,\epsilon]}e^{-|s|\tfrac{n+2\gamma}{2}}\int_{-L/2}^{L/2} \tilde{v}_L(s+\tau)^2 \,d\tau ds +\int_{-L/2}^{L/2} \tilde{v}_L(\tau)^2\,d\tau \int_{\r \setminus [-\epsilon,\epsilon]}e^{-|s|\tfrac{n+2\gamma}{2}}\, ds<\infty.\\
          I_2\sim&\int_{-L/2}^{L/2} \int_{-\epsilon}^{\epsilon} \frac{(\tilde{v}_L(s+\tau)-\tilde{v}_L(\tau))^2}{|s|^{1+2\gamma}} \,ds d\tau
          \sim \int_{-L/2}^{L/2} \int_{ -\epsilon}^{\epsilon} \tilde{v}'_L(\tau)^2|s|^{1-2\gamma}\, ds d\tau<\infty.
          \end{split}
          \end{equation*}
          In this second integral, we have used the Taylor expansion of $\tilde{v}_L$.

On the other hand, $c^{*} (L) = c_{n,\gamma}L^{\frac{{\beta}-1}{{\beta}+1}} \to +\infty$ as $L\to +\infty$. This  proves \eqref{30}.

\end{proof}

\begin{rem}
When $L\rightarrow\infty$, the minimizer $v_L$ for the functional given in \eqref{functional}  satisfies that
$$v_L\rightarrow v_{\infty}\equiv b, $$
where $b(t)$ is defined as in \eqref{b} up to multiplicative constant. The proof of this fact will be postponed to  the forthcoming article \cite{Ao-DelaTorre-Gonzalez-Wei}.
\end{rem}

Let $v$ be a $L$-periodic solution of equation \eqref{equation3}, i.e.,
\begin{equation}\label{eqw}
 \kappa_{n,\gamma}P.V.\int_{0}^L
 (v(t)-v(\tau))K_L(t-\tau)\,d\tau +c_{n,\gamma}v(t)=  c_{n,\gamma}v(t)^{\beta}.
 \end{equation}
The linearization of this equation around the constant solution $v_1\equiv 1$ is:
\begin{equation}\label{linearized}\kappa_{n,\gamma}\int_{0}^L
 (v(t)-v(\tau))K_L(t-\tau)\,d\tau -c_{n,\gamma}(\beta-1) v(t)=0.\end{equation}
We consider the eigenvalue problem for this linearized operator: %
 \begin{equation}\label{vpropio}
 \kappa_{n,\gamma}\int_{0}^L
 (v(t)-v(\tau))K_L(t-\tau)\,d\tau -c_{n,\gamma}(\beta-1)v(t)= \delta_L v(t).
 \end{equation}

\begin{lem}
There exists $\tilde{L}_0>0$ such that
\begin{equation*}
\delta_L<0\text{ if }L>\tilde{L}_0,\quad
\delta_L>0\text{ if }L<\tilde{L}_0,\text{ and }
\delta_{\tilde{L}_0}=0.
\end{equation*}
\end{lem}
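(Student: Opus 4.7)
The natural strategy is to diagonalize the convolution operator in \eqref{vpropio} using Fourier series. Since $K_L$ is $L$-periodic and even, the basis $\{\cos(2\pi k t/L),\sin(2\pi k t/L)\}_{k\ge 0}$ diagonalizes the operator. For $\varphi_k(t)=e^{2\pi i kt/L}$, the substitution $s=t-\tau$ together with the $L$-periodicity and evenness of $K_L$ gives
\begin{equation*}
\kappa_{n,\gamma}\,\text{P.V.}\!\int_0^L(\varphi_k(t)-\varphi_k(\tau))K_L(t-\tau)\,d\tau=\mu_{L,k}\,\varphi_k(t),\quad \mu_{L,k}=\kappa_{n,\gamma}\!\int_0^L(1-\cos(2\pi ks/L))\,K_L(s)\,ds.
\end{equation*}
Note that the integrand defining $\mu_{L,k}$ is genuinely integrable (no principal value needed) because $(1-\cos(2\pi ks/L))$ vanishes quadratically at the singularities $s=0,L$ of $K_L$, which by Lemma~\ref{Kexpansion} blow up only like $|s|^{-1-2\gamma}$ or $|L-s|^{-1-2\gamma}$. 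Thus the spectrum of the linearized operator is $\{\delta_{L,k}=\mu_{L,k}-c_{n,\gamma}(\beta-1)\}_{k\in\z}$; the zero mode always yields $\delta_{L,0}=-c_{n,\gamma}(\beta-1)<0$, and the eigenvalue of interest is $\delta_L:=\delta_{L,1}$, the first non-constant Fourier mode, which governs the bifurcation from the constant solution.

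After the rescaling $s=Lu$ one finds
\begin{equation*}
\delta_L+c_{n,\gamma}(\beta-1)=\kappa_{n,\gamma}\int_0^1(1-\cos(2\pi u))\,L\,K_L(Lu)\,du.
\end{equation*}
Setting $t-\tau=L_1 u$ in Lemma~\ref{appendix} yields $L\,K_L(Lu)<L_1\,K_{L_1}(L_1 u)$ whenever $L>L_1>0$. Hence the integrand above is strictly decreasing in $L$ at each $u\in(0,1)$, which immediately shows that $L\mapsto\delta_L$ is continuous and strictly decreasing.

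It remains to determine the limiting behavior. As $L\to 0^+$, the asymptotics $K(\xi)\sim|\xi|^{-1-2\gamma}$ near zero from Lemma~\ref{Kexpansion} give $LK_L(Lu)\ge LK(Lu)\sim c\,L^{-2\gamma}u^{-1-2\gamma}$, while $(1-\cos(2\pi u))u^{-1-2\gamma}\sim u^{1-2\gamma}$ is integrable near the endpoints of $(0,1)$, so $\delta_L\to+\infty$. As $L\to\infty$, the exponential decay $K(\xi)\sim e^{-|\xi|(n+2\gamma)/2}$ forces $LK_L(Lu)=L\sum_{j\in\z}K(L(u-j))\to 0$ pointwise for $u\in(0,1)$, since $|u-j|$ is bounded away from $0$; the monotonicity of $L\mapsto LK_L(Lu)$ provided by Lemma~\ref{appendix} gives the uniform majorant $LK_L(Lu)\le L_*K_{L_*}(L_*u)$ for any fixed $L_*>0$ and all $L\ge L_*$, and this majorant is integrable against $(1-\cos(2\pi u))$, so dominated convergence yields $\delta_L\to-c_{n,\gamma}(\beta-1)<0$. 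Continuity, strict monotonicity, and the intermediate value theorem produce a unique $\tilde L_0>0$ with the stated sign-change property.

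The main technical point lies in the $L\to\infty$ analysis, where two competing effects must be controlled: the oscillatory factor $1-\cos(2\pi u)$ that enforces cancellation, and the periodic summation in $K_L$ that in principle accumulates contributions from translates of $K$. It is precisely Lemma~\ref{appendix} that resolves this tension, both by certifying pointwise decay of $LK_L(Lu)$ and by supplying the uniform majorant that legitimizes the passage to the limit.
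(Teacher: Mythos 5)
Your proof is correct, and it takes a genuinely different route from the paper's. The paper simply cites the symbol computation of $\mathscr{L}_\gamma$ from \cite{paper1}: there the operator is diagonalized in Fourier space on $\mathbb R\times\mathbb S^{n-1}$ and its symbol is expressed as a quotient of Gamma functions, so $\delta_L$ is read off from that formula at the frequency $\sqrt{\lambda}=2\pi/L$, and the monotonicity in $L$ is inherited from the (cited) monotonicity of the symbol. You instead stay entirely at the level of the kernel: you diagonalize $\mathscr{L}^L_\gamma$ by Fourier series on the period, obtain the explicit eigenvalue $\mu_{L,k}=\kappa_{n,\gamma}\int_0^L(1-\cos(2\pi ks/L))K_L(s)\,ds$ (correctly identifying $\delta_L$ with the $k=1$ mode, which is consistent with the paper's $L=2\pi/\sqrt{\lambda}$), and then derive strict monotonicity of $\delta_L$ from Lemma~\ref{appendix} after the rescaling $s=Lu$ — exactly the same scaling mechanism the paper uses later to prove $L_0=\tilde L_0$. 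What your argument buys is self-containedness and completeness: you actually verify the limits $\delta_L\to+\infty$ as $L\to0^+$ and $\delta_L\to-c_{n,\gamma}(\beta-1)<0$ as $L\to\infty$ using the kernel asymptotics of Lemma~\ref{Kexpansion}, so the existence of the sign change follows from the intermediate value theorem rather than being implicit in an external formula. One small presentational point: pointwise strict decrease of the integrand gives strict decrease of $\delta_L$ but not by itself continuity; continuity does hold, but it comes from the same dominated-convergence majorant $L_*K_{L_*}(L_*u)$ that you introduce for the $L\to\infty$ limit, so it would be cleaner to invoke that majorant already at that step.
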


\begin{proof}
 Following the computations in \cite{paper1} we get that the first eigenvalue $\delta_L$ is given by the implicit expression
\begin{equation*}
\frac{\left|\Gamma(\tfrac{n}{4}+\tfrac{\gamma}{2}+\tfrac{\sqrt{\lambda}}{2}i)\right|^2}
{\left|\Gamma(\tfrac{n}{4}-\tfrac{\gamma}{2}+\tfrac{\sqrt{\lambda}}{2}i)\right|^2}
=\frac{n+2\gamma}{n-2\gamma}\frac{\left|\Gamma\left(\frac{1}{2}\left(\frac{n}{2}+\gamma\right)\right)\right|^2}
{\left|\Gamma\left(\frac{1}{2}\left(\frac{n}{2}-\gamma\right)\right)\right|^2}+\delta_L.
\end{equation*}
Here $\lambda$ is univocally related with the period by $L=\tfrac{2\pi}{\sqrt{\lambda}}$. %
 $\delta_L$ is a strictly decreasing function of $L$. %
We now define $\tilde{L}_0$  as the period corresponding to the zero eigenvalue.
\end{proof}

We are now ready to conclude the proof of Theorem \ref{th1}. Let
\begin{equation}\label{L0} L_0= \mbox{sup} \{ L \ | \ c(l)=c^{*} (l) \ \mbox{for} \ l \in (0, L)\}.\end{equation}
By Lemma \ref{lemma1} we see that $L_0>0$.  By Lemma \ref{lemma2}, also $ L_0 <+\infty$.  Then we are left to  check that if $L=L_0$ we just have the constant solution.


\begin{prop}\label{L01}
If $L=\tilde{L}_0$ the unique solution for \eqref{eqw} is the constant solution $v_1\equiv 1$.
\end{prop}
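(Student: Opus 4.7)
The plan is to argue by contradiction: assume $v$ is a nonconstant positive $\tilde{L}_0$-periodic solution of \eqref{eqw}, and combine a Fourier-space spectral identity derived from the equation with the strict rescaling inequality of Lemma \ref{appendix}. Since $\mathscr{L}_\gamma^{\tilde{L}_0}$ is translation-invariant and self-adjoint on $\tilde{L}_0$-periodic functions, it diagonalizes in the Fourier basis $\{e^{i\omega k t}\}_{k\in\mathbb{Z}}$ with $\omega := 2\pi/\tilde{L}_0$ and real multipliers $\mu_k$; write $\lambda_k := \mu_k - c_{n,\gamma}\beta$ for the symbol of the linearization $\mathcal{M} := \mathscr{L}_\gamma^{\tilde{L}_0} - c_{n,\gamma}\beta$ around the constant solution. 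The explicit Gamma-function formula in the lemma preceding Proposition \ref{L01} gives $\lambda_0 = -c_{n,\gamma}(\beta-1) < 0$ and $\lambda_{\pm 1} = \delta_{\tilde{L}_0} = 0$; moreover, since the $k$-th Fourier mode at period $\tilde{L}_0$ has the same eigenvalue as the first mode at period $\tilde{L}_0/|k|$, the strict monotonicity of $\delta_L$ in $L$ yields $\lambda_k = \delta_{\tilde{L}_0/|k|} > 0$ for every $|k| \geq 2$.

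Next, I would derive the key integral identity. Testing \eqref{eqw} against the constant $1$ and using the symmetry of $K_L$ gives $\int_0^{\tilde{L}_0} v = \int_0^{\tilde{L}_0} v^\beta$. Setting $\phi := v - \bar{v}$ (with zero mean), the equation yields
\begin{equation*}
\mathcal{M}\phi \;=\; c_{n,\gamma}\bigl[v^\beta - \beta v + (\beta-1)\bar{v}\bigr].
\end{equation*}
Pairing with $\phi$, expanding using $\int v = \int v^\beta$, and applying Parseval on the left-hand side (noting that $\hat{\phi}(0)=0$ and $\lambda_{\pm 1}=0$) produce the Pohozaev-type identity
\begin{equation*}
\int_0^{\tilde{L}_0}\! v^{\beta+1}\,dt \;-\; \beta \int_0^{\tilde{L}_0}\! v^2\,dt \;+\; (\beta-1)\tilde{L}_0\,\bar{v}^2 \;=\; \frac{\tilde{L}_0}{c_{n,\gamma}}\sum_{|k|\geq 2}\lambda_k|\hat{v}(k)|^2 \;\geq\; 0,
\end{equation*}
with equality if and only if $\hat{v}(k) = 0$ for every $|k|\geq 2$, i.e.\ $v\in\mathrm{span}\{1,\cos\omega t,\sin\omega t\}$.

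The crux of the proof is to force this equality. I would apply the rescaling $\tilde{v}_L(\bar{t}) := v(\tilde{L}_0\bar{t}/L)$, which is $L$-periodic, together with the strict kernel inequality of Lemma \ref{appendix}, exactly as in the proof of Lemma \ref{L21}, to obtain $\mathscr{F}_L(\tilde{v}_L) < (L/\tilde{L}_0)^{(\beta-1)/(\beta+1)}\,\mathscr{F}_{\tilde{L}_0}(v)$ for every $L > \tilde{L}_0$. Combined with the fact that $\delta_L > 0$ for $L < \tilde{L}_0$ forces $c(L) = c^*(L)$ there (by the contrapositive of Lemma \ref{L21} together with strict local minimality of the constant), continuity of $c(L)$ in $L$, and the definition of $L_0$, this pins down $\mathscr{F}_{\tilde{L}_0}(v) = c(\tilde{L}_0) = c^*(\tilde{L}_0)$ and makes the right-hand side of the identity above vanish. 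Hence $v = A + B\cos\omega t + C\sin\omega t$; substituting into \eqref{eqw} and matching Fourier coefficients at modes $|k|\geq 2$ (the left-hand side $\mathscr{L}_\gamma^{\tilde{L}_0} v$ is supported in $\mathrm{span}\{1,\cos\omega t,\sin\omega t\}$, while $(A+B\cos\omega t+C\sin\omega t)^\beta$ has at least one nonzero coefficient at some $|k|\geq 2$ unless $B=C=0$) shows $B = C = 0$; finally $A^\beta = A$ gives $A = 1$, contradicting the nonconstancy of $v$. The hardest step will be the rigorous execution of this equality-forcing, which requires carefully interfacing the spectral monotonicity of $\delta_L$, the strict kernel inequality of Lemma \ref{appendix}, and the variational structure in order to establish the identification $L_0 = \tilde{L}_0$ and close the contradiction.
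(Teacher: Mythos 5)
Your Fourier-side setup (the multipliers $\lambda_k$, the facts $\lambda_0<0$, $\lambda_{\pm1}=\delta_{\tilde L_0}=0$, $\lambda_k=\delta_{\tilde L_0/|k|}>0$ for $|k|\ge2$, and the identity obtained by pairing the equation with $v-\bar v$) is correct, but the argument breaks down at exactly the point you flag as ``the hardest step'': nothing in your proposal actually forces the equality $\sum_{|k|\ge 2}\lambda_k|\hat v(k)|^2=0$. First, the implication ``$\delta_L>0$ for $L<\tilde L_0$ $\Rightarrow$ $c(L)=c^*(L)$ for all $L<\tilde L_0$'' is unproven: strict \emph{local} minimality of the constant does not yield global minimality, and the paper only establishes $c(L)=c^*(L)$ for small $L$ (Lemma \ref{lemma1}). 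Second, the identification $c(\tilde L_0)=c^*(\tilde L_0)$, i.e.\ $L_0\ge\tilde L_0$, is in the paper a \emph{consequence} of Proposition \ref{L01}, so invoking it (or the definition \eqref{L0}) here is circular. Third, even granting $\mathscr F_{\tilde L_0}(v)=c^*(\tilde L_0)$, that is a statement about the Rayleigh quotient of $\mathscr L^L_\gamma$, not about the quadratic form of the linearization $\mathcal M$, and it does not imply that the high-mode sum vanishes. Finally, your route only ever applies to minimizers, whereas the proposition concerns arbitrary positive solutions of \eqref{eqw}; and the concluding step (that $(A+B\cos\omega t+C\sin\omega t)^\beta$ must have a nonzero Fourier coefficient at some $|k|\ge2$ unless $B=C=0$) is asserted without proof.

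The paper's proof avoids all of this with a short duality argument you are missing: write $v=1+w$, test the weak form of \eqref{eqw} against the first eigenfunction $\varphi_1$ of the linearized problem \eqref{vpropio} at $L=\tilde L_0$ (zero eigenvalue), and use the symmetry of the bilinear form to move $\tilde{\mathscr L}^L_\gamma$ onto $\varphi_1$, where the eigenvalue equation turns it back into multiplication by $(\beta-1)c_{n,\gamma}$. The quadratic form cancels exactly and one is left with the single scalar identity $\int_0^{\tilde L_0}\bigl(\beta w+1-(1+w)^\beta\bigr)\varphi_1\,dt=0$; since $(1+w)^\beta\ge 1+\beta w$ for $w>-1$ with equality only at $w=0$, the positivity of $\varphi_1$ forces $w\equiv0$. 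If you want to salvage your spectral approach, the fix is precisely this: replace the test function $v-\bar v$ by $\varphi_1$, so that the nonnegative remainder $\sum_{|k|\ge2}\lambda_k|\hat v(k)|^2$ never appears and no variational input ($L_0$, $c(L)$, Lemma \ref{appendix}, continuity of $c$) is needed at all.
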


%
%


\begin{proof}
Let $v>0$ and $v_1\equiv1$ be both $\tilde L_0$-periodic solutions of \eqref{eqw}. We define
 \begin{equation}\label{w}
 w=v-1.
 \end{equation}
On the one hand, using the weak formulation for the problem \eqref{eqw} given in \eqref{formdebil}, we have
\begin{equation}\label{weak1}
    \langle\mathscr{L}_{\gamma}^Lv,\phi\rangle=\langle\mathscr{\tilde L}_{\gamma}^{ L}v,\phi\rangle+c_{n,\gamma}\int_0^Lv(t)\phi(t)\,dt= c_{n,\gamma}\int_0^Lv^{\beta}(t)\phi(t)\,dt,\quad  \forall \phi\in H^{\gamma}_L,
\end{equation}
    where we have defined
    \begin{equation*}
    \langle\mathscr{\tilde L}_{\gamma}^{ L}v,\phi\rangle= \kappa_{n,\gamma}P.V.   \int_{0}^L\int_{0}^L (v(t)-v(\tau))(\phi(t)-\phi
    (\tau))K_L(t-\tau)\,dt\,d\tau.
    \end{equation*}
Thus, in particular for $v=w+1$, equation \eqref{weak1} reads
\begin{equation*}
\langle\mathscr{\tilde L}_{\gamma}^{ L}(w),\phi\rangle+c_{n,\gamma}\int_0^L(w(t)+1)\phi(t)\,dt= c_{n,\gamma}\int_0^L(w(t)+1)^{\beta}\phi(t)\,dt,\quad  \forall \phi\in H^{\gamma}_L,
\end{equation*}
which, interchanging $\phi$ and $w$ in the first term, is equivalent to
\begin{equation}\label{weak2}
\langle\mathscr{\tilde L}_{\gamma}^{ L}(\phi),w\rangle+c_{n,\gamma}\int_0^L\left((w(t)+1)-(w(t)+1)^{\beta}\right)\phi(t)\,dt=0,\quad  \forall \phi\in H^{\gamma}_L.
\end{equation}
On the other hand, if $\varphi_1$ denotes the first eigenfunction for the linearized problem around $v\equiv 1$, given in \eqref{vpropio}, for the period $\tilde{L}_0$ (i.e. the corresponding to the zero eigenvalue $\delta_{\tilde{L}_0}=0$), the following holds
\begin{equation*}
    \langle\mathscr{\tilde L}_{\gamma}^{ L}\varphi_1,\phi\rangle+c_{n,\gamma}\int_0^L\varphi_1(t)\phi(t)\,dt= {\beta}c_{n,\gamma}\int_0^L\varphi_1(t)\phi(t)\,dt,\quad  \forall \phi\in H^{\gamma}_L.
\end{equation*}
Now we choose the test function here to be $\phi=w$, the function defined in \eqref{w}, and the equality above becomes
\begin{equation}\label{weak3}
    \langle\mathscr{\tilde L}_{\gamma}^{ L}\varphi_1,w\rangle = (\beta-1)c_{n,\gamma}\int_0^L\varphi_1(t)w(t)\,dt.
\end{equation}
Coming back to equation \eqref{weak2} for the test function $\phi=\varphi_1$, then we have
\begin{equation*}
\langle\mathscr{\tilde L}_{\gamma}^{ L}(\varphi_1),w\rangle+c_{n,\gamma}\int_0^L\left((w(t)+1)-(w(t)+1)^{\beta}\right)\varphi_1(t)\,dt=0,
\end{equation*}
which using equality \eqref{weak3} reads
\begin{equation}\label{weak4}
\int_0^L\left(\beta w(t)+1-(w(t)+1)^{\beta}\right)\varphi_1(t)\,dt=0.
\end{equation}
The positivity of the first eigenfunction $\varphi_1$ and the convexity of the function $f(w)=\beta w(t)+1-(w(t)+1)^{\beta}$ assure that the only possible solution for \eqref{weak4} is $w\equiv 0$.
\end{proof}

Let $v\in H_L^{\gamma}$ and $E_L$, $\tilde{E}_L$ be the energy functionals for the non-linear and the linear problems \eqref{eqw} and \eqref{linearized} defined by
\begin{equation}\label{E}
E_L(v):=\tfrac{\kappa_{n,\gamma}}{2}\int_0^L\int_{0}^L
 (v(t)-v(\tau))^2K_L(t-\tau)\,d\tau\,dt +\tfrac{c_{n,\gamma}}{2}\int_0^L v^2(t)-\tfrac{c_{n,\gamma}}{\beta+1}\int_0^L v^{\beta+1}(t)\,dt, \end{equation}
 and
\begin{equation}\label{E_lin}
\tilde{E}_L(v):=\tfrac{\kappa_{n,\gamma}}{2}\int_0^L\int_{0}^L
 (v(t)-v(\tau))^2K_L(t-\tau)\,d\tau\,dt -\tfrac{c_{n,\gamma}}{2}(\beta-1)\int_0^L v^2(t)\,dt,\end{equation}
respectively. 
The variational formulation of the first eigenvalue $\delta_L$ (Rayleygh quotient) for \eqref{E_lin} implies the following Poincar{\'e} inequality 
\begin{equation*}
\kappa_{n,\gamma}\int_0^L\int_{0}^L
 (v(t)-v(\tau))^2K_L(t-\tau)\,d\tau\,dt -c_{n,\gamma}(\beta-1)\int_0^L v^2(t)\,dt
 \geq\delta_L\int_0^L v^2(t)\,dt, \quad \forall v\in H^\gamma_L.
\end{equation*}
In particular, if $\varphi_1$ denotes, as before, the first eigenfunction for the linearized problem around $v\equiv 1$ at the period $\tilde L_0$, we have the equality
\begin{equation}\label{vp0}
\kappa_{n,\gamma}\int_0^{\tilde{L}_0}\int_{0}^{\tilde{L}_0}
 (\varphi_1(t)-\varphi_1(\tau))^2K_{\tilde{L}_0}(t-\tau)\,d\tau\,dt -c_{n,\gamma}(\beta-1)\int_0^{\tilde{L}_0} \varphi_1^2(t)\,dt
 =0.
\end{equation}

\begin{prop}
The period $L_0$ defined in \eqref{L0} coincides with the period $\tilde{L}_0$ given by the zero eigenvalue in equation \eqref{vpropio}.
\end{prop}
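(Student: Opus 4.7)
The plan is to establish both $L_0\ge\tilde L_0$ and $L_0\le\tilde L_0$.

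For $L_0\ge\tilde L_0$, I would invoke Proposition \ref{L01}: at $L=\tilde L_0$ the only positive $\tilde L_0$-periodic solution of \eqref{eqw} is $v_1\equiv 1$. Since Lemma \ref{Lem1} guarantees that $c(\tilde L_0)$ is achieved by a positive solution of \eqref{13_2}, necessarily $c(\tilde L_0)=\mathscr{F}_{\tilde L_0}(1)=c^*(\tilde L_0)$. If we had $c(L_1)<c^*(L_1)$ for some $L_1\in(0,\tilde L_0)$, then the minimizer at $L_1$ would be nonconstant (the constant yields $c^*(L_1)$), and Lemma \ref{L21} would force $c(\tilde L_0)<c^*(\tilde L_0)$, a contradiction. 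Hence $c(l)=c^*(l)$ for every $l\in(0,\tilde L_0]$, so by \eqref{L0} we obtain $L_0\ge\tilde L_0$.

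For $L_0\le\tilde L_0$, I fix $L>\tilde L_0$ so that $\delta_L<0$, and test the variational quotient \eqref{cL} with $v_\varepsilon:=1+\varepsilon\varphi_1$, where $\varphi_1$ is the first eigenfunction of \eqref{vpropio} at period $L$. Being a nontrivial Fourier-type mode (via the parametrization $L=2\pi/\sqrt\lambda$), $\varphi_1$ has zero mean, $\int_0^L\varphi_1\,dt=0$. A Taylor expansion of $\mathscr{F}_L(v_\varepsilon)$ around $\varepsilon=0$, whose linear term vanishes both by the scale-invariance of $\mathscr{F}_L$ and by the zero-mean property, yields
\begin{equation*}
\mathscr{F}_L(v_\varepsilon)=c^*(L)+\frac{\varepsilon^2}{L^{2/(\beta+1)}}\Bigl[\kappa_{n,\gamma}\!\!\int_0^L\!\!\int_0^L(\varphi_1(t)-\varphi_1(\tau))^2K_L(t-\tau)\,dt\,d\tau-c_{n,\gamma}(\beta-1)\!\!\int_0^L\!\varphi_1^2\,dt\Bigr]+O(\varepsilon^3).
\end{equation*}
By the Rayleigh-quotient characterization of $\delta_L$ recorded after Proposition \ref{L01} (the extension of \eqref{vp0} to arbitrary $L$), the bracket equals $\delta_L\int_0^L\varphi_1^2\,dt<0$. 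Therefore, for small enough $\varepsilon>0$ we have $v_\varepsilon>0$ and $\mathscr{F}_L(v_\varepsilon)<c^*(L)$, whence $c(L)<c^*(L)$. By \eqref{L0} this prevents any $L>\tilde L_0$ from lying in the admissible set, so $L_0\le\tilde L_0$, and combined with the previous inequality we conclude $L_0=\tilde L_0$.

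The only nontrivial step is the identification of the second-variation bracket with $\delta_L\int\varphi_1^2$, which follows from multiplying \eqref{vpropio} by $\varphi_1$, integrating over $[0,L]$, and exploiting the evenness of $K_L$ through the symmetry identity $\int_0^L\varphi_1(t)\int_0^L(\varphi_1(t)-\varphi_1(\tau))K_L(t-\tau)\,d\tau\,dt=\tfrac12\int_0^L\!\int_0^L(\varphi_1(t)-\varphi_1(\tau))^2K_L(t-\tau)\,dt\,d\tau$. One subtlety is to choose the zero-mean eigenfunction (the one whose eigenvalue crosses through zero at $\tilde L_0$) rather than the constant eigenfunction of \eqref{vpropio}, whose eigenvalue $-c_{n,\gamma}(\beta-1)$ is a fixed negative quantity independent of $L$ and would otherwise spuriously suggest that the second variation is always negative.
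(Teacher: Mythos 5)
Your argument is correct, and the first inequality $L_0\ge\tilde L_0$ is exactly the paper's: Proposition \ref{L01} gives $c(\tilde L_0)=c^*(\tilde L_0)$, and Lemma \ref{L21} rules out a nonconstant minimizer at any smaller period. For the reverse inequality $L_0\le\tilde L_0$ you take a genuinely different, and in some ways cleaner, route. The paper perturbs the constant at period $L_\epsilon=\tilde L_0+\epsilon$ by the \emph{rescaled} critical eigenfunction $\phi_{L_\epsilon}(t)=\varphi_1(\tfrac{\tilde L_0}{L_\epsilon}t)$, and obtains the strict negativity of the second variation \eqref{ineq} by combining the equality \eqref{vp0} (zero eigenvalue at $\tilde L_0$) with the strict kernel comparison of Lemma \ref{appendix}; this only uses information at the single period $\tilde L_0$ plus the scaling monotonicity of $K_L$. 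You instead work directly at period $L>\tilde L_0$ with the eigenfunction of \eqref{vpropio} at that period, identify the second variation of $\mathscr{F}_L$ at the constant with $\delta_L\int_0^L\varphi_1^2\,dt$ via the symmetrization identity, and conclude from $\delta_L<0$. This requires the strict sign of $\delta_L$ for every $L>\tilde L_0$, which the paper does establish beforehand through the explicit Gamma-function formula, so the invocation is legitimate; in exchange you avoid Lemma \ref{appendix} entirely in this step and make the mechanism (loss of linear stability of the constant) more transparent. Your expansion of the quotient is correct --- the linear term drops because $\int_0^L\varphi_1=0$, and the denominator contributes the $-c_{n,\gamma}\beta\int\varphi_1^2$ piece that turns the numerator's $+c_{n,\gamma}\int\varphi_1^2$ into $-c_{n,\gamma}(\beta-1)\int\varphi_1^2$ --- and your closing remark about discarding the constant eigenfunction of \eqref{vpropio} (whose eigenvalue $-c_{n,\gamma}(\beta-1)$ is $L$-independent) addresses a genuine ambiguity that the paper leaves implicit.
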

\begin{proof}
First, because of the definition of $L_0$ given in \eqref{L0} we can easily check that $L_0\geq \tilde{L}_0$. Indeed, Proposition \ref{L01} asserts that $c(\tilde{L}_0)=c^*(\tilde{L}_0)$ and Lemma \ref{L21} assures that this is not possible if $\tilde{L}_0>L_0$.

 We now are going to check the opposite inequality. We have defined $\tilde{L}_0$ as the period where the constant solution $v_1\equiv 1$ loses stability. 
This is, if we define
\begin{equation}\label{Leps}L_{\epsilon}=\tilde{L}_0+\epsilon\end{equation} with $\epsilon>0$, we have instability for the constant solution and thus $c(L_{\epsilon})<c^*(L_{\epsilon})$. 
 To prove this, we compute the energy \eqref{E} for the function $1+\sigma\phi_{L_{\epsilon}}$, where $\sigma>0$ small enough and $\phi_{L_{\epsilon}}\in H^{\gamma}_{L_{\epsilon}}$.
 We have
\begin{equation*}
\begin{split}
E_{L_{\epsilon}}(1+\sigma\phi_{L_{\epsilon}})=&E_{L_{\epsilon}}(1)\\
&+\sigma^2\left[\kappa_{n,\gamma}\int_0^{L_{\epsilon}}\int_{0}^{L_{\epsilon}}
 (\phi_{L_{\epsilon}}(t)-\phi_{L_{\epsilon}}(\tau))^2K_{L_{\epsilon}}(t-\tau)\,d\tau\,dt -c_{n,\gamma}(\beta-1)\int_0^{L_{\epsilon}} \phi_{L_{\epsilon}}^2(t)\,dt
\right]\\
&+\text{h.o.t}.
\end{split}
\end{equation*}
Therefore, if we find $\phi_{L_{\epsilon}}$ such that
\begin{equation}\label{ineq}
\kappa_{n,\gamma}\int_0^{L_{\epsilon}}\int_{0}^{L_{\epsilon}}
 (\phi_{L_{\epsilon}}(t)-\phi_{L_{\epsilon}}(\tau))^2K_{L_{\epsilon}}(t-\tau)\,d\tau\,dt -c_{n,\gamma}(\beta-1)\int_0^{L_{\epsilon}} \phi_{L_{\epsilon}}^2(t)\,dt<0,\end{equation}
  the instability of $v_1\in H^{\gamma}_{L_{\epsilon}}$ is proved.  Let $\phi_{L_{\epsilon}}(t)=\varphi_1(\tfrac{\tilde{L_0}}{L_{\epsilon}}t)$. Under the changes of variable $\bar{t}=\tfrac{L_{\epsilon}}{\tilde{L}_0}t$ and $\bar{\tau}=\tfrac{L_{\epsilon}}{\tilde{L}_0}\tau$, equality \eqref{vp0} and Lemma \ref{appendix} imply \eqref{ineq}.
 Here we have also used that $\beta=\tfrac{n+2\gamma}{n-2\gamma}>1$, and $L_{\epsilon}>\tilde{L_0}$ \eqref{Leps}.

 The definition of $L_0$ \eqref{L0} implies $L_0\leq \tilde{L}_0+\epsilon$. Taking limit as $\epsilon$ goes to zero we have the claimed equality $L_0=\tilde{L}_0.$
\end{proof}
This proves completes the proof of Theorem \ref{th1}.
\qed

\bigskip\textbf{Acknowledgements.} The authors would like to thank the anonymous referee for his/her many suggestions and improvements of the paper.

\bibliographystyle{abbrv}

\end{document}